\numberwithin{equation}{section}
\newcommand{\F}{\mathcal{F}}
\newcommand{\N}{\mathbb{N}}
\newcommand{\R}{\mathbb{R}}
\newcommand{\ZZ}{\mathbb{Z}}
\newcommand{\RR}{\R}
\newcommand{\C}{\mathbb{C}}
\newcommand{\HH}{\mathbb{H}}
\newcommand{\sG}{\mathsf{G}}
\newcommand{\Es}{E^s}
\newcommand{\Ec}{E^c}
\newcommand{\Eu}{E^u}
\newcommand{\Ecs}{E^{cs}}
\newcommand{\Ecu}{E^{cu}}
\newcommand{\eps}{\varepsilon}
\newcommand{\RP}{{\mathbb{R}\mathbb{P}}}
\renewcommand{\a}{\mathfrak{a}}
\newcommand{\ap}{\mathfrak{a}^+}
\newcommand{\liesl}{\mathfrak{sl}_2(\mathbb{R})}
\DeclareMathOperator{\SL}{\mathsf{SL}}
\DeclareMathOperator{\PSL}{\mathsf{PSL}}
\DeclareMathOperator{\Gr}{\mathcal{G}}
\DeclareMathOperator{\GL}{\mathsf{GL}}
\DeclareMathOperator{\SO}{\mathsf{SO}}
\DeclareMathOperator{\dist}{dist}
\newtheorem{problem}{Problem}
\newtheorem{theorem}{Theorem}[section]
\newtheorem{teo}[theorem]{Theorem}
\newtheorem{lemma}[theorem]{Lemma}
\newtheorem{lem}[theorem]{Lemma}
\newtheorem{proposition}[theorem]{Proposition}
\newtheorem{cor}[theorem]{Corollary}
\newtheorem{prop}[theorem]{Proposition}
\theoremstyle{definition}
\newtheorem{rem}[theorem]{Remark}
\title[Quasi-isometric free groups in $\SL_3(\R)$]{Quasi-isometric free group representations into \(\SL_3(\R)\)}
\author{Le\'on Carvajales \and Pablo Lessa \and Rafael Potrie}
\address{}
\thanks{The authors were partially supported by ANII-FCE Agencia Nacional de Investigaci\'on e Innovaci\'on -
FCE\_3\_2020\_1\_162840, Ministerio de Educaci\'on y Cultura DICYT - FVF\_2023\_436, CSIC Grupos 2022 "Geometr\'ia y Acciones de Grupos" and CSIC I+D "Estructuras topologicas de sistemas parcialmente hiperbólicos y aplicaciones''.}
\address{\newline Le\'on Carvajales \newline Universidad de la Rep\'ublica \newline Facultad de Ciencias Econ\'omicas y de Administraci\'on \newline Instituto de Estad\'istica \newline CNRS IRL IFUMI \newline e-mail: leon.carvajales@fcea.edu.uy \newline \newline
Rafael Potrie \newline Universidad de la Rep\'ublica \newline Facultad de Ciencias \newline CNRS IRL IFUMI
\newline Centro de Matem\'atica \newline e-mail: rpotrie@cmat.edu.uy \newline \newline
Pablo Lessa \newline Universidad de la Rep\'ublica \newline Facultad de Ciencias
\newline Centro de Matem\'atica \newline e-mail: lessa@cmat.edu.uy
}
\begin{document}

\begin{abstract}
We study quasi-isometric representations of finitely generated non-abelian free groups into some higher rank semi-simple Lie groups which are not Anosov, nor approximated by Anosov. 
We show in some cases that these can be perturbed to be non-quasi-isometric, or to have some instability properties with respect to their action on the flag space.
\end{abstract}
 
 \maketitle

\setcounter{tocdepth}{1}
\tableofcontents

\section{Introduction}

\subsection{Main problem}
 
This article attempts to make a contribution towards understanding the following:
\begin{problem}\label{prob1}
Let \(\rho:\Gamma \to \sG\) be a representation from a finitely generated non-abelian free group \(\Gamma\) to a connected semi-simple Lie group \(\sG\).  If \(\rho\) is robustly quasi-isometric, does it follow that \(\rho\) is Anosov?
\end{problem}

We will now recall the necessary definitions.  We say a property of a representation \(\rho:\Gamma \to \sG\) is \textit{robust} if it holds in an open neighborhood of \(\rho\), in the topology of pointwise convergence.
This topology is equivalent to the one induced by evaluating a representation on a given finite generating set.

Quasi-isometric representations are defined as follows.
Fixing \(F \subset \Gamma\) a finite symmetric generating set of \(\Gamma\)\footnote{Throughout, we assume that the rank of $\Gamma$ as a free group is $\geq 2$.}, we recall that the \textit{word length} of an element \(\gamma \in \Gamma\) is defined  by
\[|\gamma| := \min\lbrace n \ge 1: \gamma \in F^n\rbrace,\]
with the convention that \(|1|\) (the word length of the neutral element) is \(0\), and \(F^n := \lbrace f_1\cdots f_n:  f_1,\ldots,f_n \in F\rbrace\).
Fixing any left-invariant Riemannian metric on \(\sG\) and letting \(\dist\) denote the associated distance, a representation \(\rho:\Gamma \to \sG\) is said to be \textit{quasi-isometric} if it is a quasi-isometric embedding of $\Gamma$, that is, there exist constants \(a \ge 1\) and \(b \ge 0\) such that
\[a^{-1}|\gamma^{-1}\eta| - b\le \dist(\rho(\gamma),\rho(\eta)) \le a|\gamma^{-1}\eta| + b,\]
for all \(\gamma,\eta \in \Gamma\). 

Finally, we recall the definition of Anosov representations.
We fix a Cartan subspace \(\a\) of $\sG$, a Weyl chamber \(\ap\subset\a\), and a corresponding Cartan projection \(a:\sG \to \ap\).  
A representation is said to be \textit{Anosov} if there exist constants \(c,d > 0\) and a simple root \(\alpha\) such that
\begin{equation}\label{eqanosov}c |\gamma| - d \le \alpha(a(\rho(\gamma))),\end{equation}
for all \(\gamma \in \Gamma\).
In that case we say that $\rho$ is $\{\alpha\}$-\textit{Anosov}.

Anosov representations were introduced by Labourie \cite{labourie} in his study of the Hitchin component \cite{hitchin}.
They were further generalized to arbitrary word hyperbolic groups by Guichard-Wienhard \cite{GW}. 
If $\sG$ is rank one, they coincide with convex co-compact representations. 
They are nowadays understood as the correct generalization of this notion to higher rank. 
See the surveys of Kassel \cite{kassel} and Wienhard \cite{weinhard} for further information and motivations.
In particular, being Anosov is a robust property \cite{labourie,GW} (see also \cite[Corollary 5.10]{BPS}).
We point out that the original definition of Anosov representations is not the one given in Equation (\ref{eqanosov}), but this is an equivalent one \cite{BPS,GGKW,KLP}. 

From Equation (\ref{eqanosov}) one readily sees that Anosov representations are quasi-isometric. 
While in rank one these two notions coincide, in higher rank being quasi-isometric is not a robust property (see Subsection \ref{subsec: non anosov examples intro} below).

Problem \ref{prob1} was suggested in Bochi-P.-Sambarino \cite[\S 4.4]{BPS} and then formalized in P. \cite[\S 4.3]{potrie}. 
The question also appears in \cite[\S 8]{kassel}. 
While the question makes sense for groups more general than free groups (and was posed in that way in the mentioned references), one needs to be careful about which groups can admit a positive answer.
Indeed, recently Tsouvalas \cite{tsouvalas2} presented examples showing that the answer is negative for general hyperbolic groups, see \S \ref{sss derived barbot intro} for further details.

\subsection{Rank \(1\) groups and products}

For \(\sG\) of rank \(1\), Problem \ref{prob1} is trivial because quasi-isometric representations and Anosov representations coincide.  However, the following question is still interesting:

\begin{problem}\label{prob2}
Suppose that \(\sG\) is a rank \(1\) connected semi-simple Lie group and \(\rho\) is a  robustly discrete and faithful representation of a non-abelian free group into $\sG$. 
Is \(\rho\) necessarily Anosov?
\end{problem}

In the case \(\sG =  \SL_2(\R)\) the answer to Problem \ref{prob2} is affirmative and is an elementary fact, see \S~\ref{ss.SL2} for a proof\begin{footnote}{However for semi-groups the problem is more subtle and studied carefully by Avila-Bochi-Yoccoz \cite{avila-bochi-yoccoz}.}\end{footnote}.  
For \(\sG=  \SL_2(\C)\) the answer is still affirmative and is a result of Sullivan \cite{sullivan}. For other rank \(1\) groups, such as \(\SO_0(1,d)\) with \(d>3\), the problem remains open as far as the authors are aware (c.f. Kapovich \cite[Question 11.14]{kapkleiniansurvey}).

We will give a partial answer to Problem \ref{prob1} for representations into certain products of rank one Lie groups in \S~\ref{ss.prodSL2}:

\begin{theorem}\label{theoproduct}
Suppose that \(\Gamma\) is a finitely generated non-abelian free group and \(\sG = \SL_2(\R)^r \times \SL_2(\C)^s\) for some non-negative integers $r$ and $s$.  Then \(\rho:\Gamma \to \sG\) is robustly faithful if and only if it is Anosov. 
\end{theorem}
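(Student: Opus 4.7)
The easy direction, Anosov implies robustly faithful, is immediate: Anosov is an open condition in $\mathrm{Hom}(\Gamma,\sG)$ by \cite{labourie,GW,BPS}, and directly from (\ref{eqanosov}) Anosov representations are quasi-isometric, hence faithful. For the converse I would argue by contrapositive: assume $\rho$ is not Anosov and produce arbitrarily small non-faithful perturbations.

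Since $\sG$ is a product of rank-one factors, its Cartan subspace decomposes as $\a=\bigoplus_{i=1}^{r+s}\a_i$, the Cartan projection splits as $a(\rho(\gamma))=(a(\rho_i(\gamma)))_i$, and the simple roots of $\sG$ are the $r+s$ pullbacks of the single simple root of each factor. The condition (\ref{eqanosov}) for the $i$-th simple root then reads as the quasi-isometry condition for $\rho_i$ into its rank-one factor, which in rank one is equivalent to convex cocompactness. Hence $\rho$ is Anosov if and only if some $\rho_i$ is convex cocompact, and the contrapositive hypothesis becomes: \emph{no factor $\rho_i$ is convex cocompact}.

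By the rank-one answers to Problem~\ref{prob2}---the argument of \S\ref{ss.SL2} for $\SL_2(\R)$ and Sullivan's theorem \cite{sullivan} for $\SL_2(\C)$---each $\rho_i$ fails to be robustly discrete-and-faithful. I would unpack these proofs to show a sharper statement: each $\rho_i$ admits arbitrarily small perturbations $\rho'_i$ with a non-trivial kernel. In the discrete-faithful case this comes from perturbing a parabolic element to an elliptic of finite order; in the non-discrete case, from the accumulation of $\rho_i(\Gamma)$ near the identity combined with the local submersivity of the evaluation map $\rho'_i \mapsto \rho'_i(\gamma)$; in the non-faithful case it is immediate.

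The crux of the proof is then to coordinate these factor-wise perturbations so that a \emph{single} element $\gamma \in \Gamma\setminus\{1\}$ lies in the kernel of every $\rho'_i$ simultaneously. Since the representation variety factors as $\prod_i \mathrm{Hom}(\Gamma,\sG_i)$, perturbations of different factors are independent; my plan is to select a candidate word $\gamma$ that is close to the identity under every $\rho_i$---for instance a carefully chosen iterated commutator of elements each small in every factor---and then use submersivity of $\mathrm{ev}_\gamma$ on each factor to adjust $\rho_i$ by a small amount so that $\rho'_i(\gamma)=I$. This final coordination is where I expect the real work to lie: the near-kernel words produced by the rank-one arguments may differ between factors, and combining the structural description of non-convex-cocompact rank-one representations (which supplies a rich pool of near-identity elements in each factor) with a dimension-counting or genericity argument across the $r+s$ factors to extract a \emph{common} such word is the principal obstacle I anticipate.
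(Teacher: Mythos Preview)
Your overall architecture is exactly that of the paper: reduce to showing that if $\rho$ is robustly faithful then some factor $\rho_i$ is robustly faithful, and then invoke Proposition~\ref{prop:robustfaithfulSL2} and Sullivan's theorem on that factor. The gap you correctly identify is the coordination step, and here the paper does something much cleaner than what you sketch.

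The paper's Lemma~\ref{lem: products} proceeds by induction on the number of factors, reducing to two groups $\sG_1$ and $\sG_2$. If neither $\rho_1$ nor $\rho_2$ is robustly faithful, pick small perturbations $\rho_1'$ and $\rho_2'$ with nontrivial elements $\gamma_1\in\ker\rho_1'$ and $\gamma_2\in\ker\rho_2'$. The key observation is that the commutator $[\gamma_1,\gamma_2]$ lies in $\ker\rho_1'$ (since $\rho_1'(\gamma_1)=1$) and in $\ker\rho_2'$ (since $\rho_2'(\gamma_2)=1$), hence in $\ker(\rho_1',\rho_2')$. If $[\gamma_1,\gamma_2]\neq 1$ we are done; if $[\gamma_1,\gamma_2]=1$ then, because $\Gamma$ is free, $\gamma_1$ and $\gamma_2$ have a common nontrivial power, which again lies in $\ker(\rho_1',\rho_2')$. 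Either way $\rho$ is not robustly faithful. No analysis, no submersivity, no search for a candidate word: the commutator manufactures the common kernel element automatically.

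By contrast, your proposed route---find a single $\gamma$ close to the identity in every factor and then use submersivity of $\mathrm{ev}_\gamma$ to push each $\rho_i'(\gamma)$ exactly to $1$---runs into real obstacles. First, the rank-one arguments produce kernel elements only \emph{after} perturbing, and different factors will in general produce unrelated words; it is not clear how to extract one $\gamma$ that is simultaneously near the identity in all $\rho_i$ before you start adjusting. Second, the submersivity of $\rho\mapsto\rho(\gamma)$ at the relevant points is not automatic: it depends on $\gamma$ and on $\rho_i$, and near degenerate representations (e.g.\ with small or virtually solvable image) it can fail outright. The paper's commutator trick sidesteps both issues by exploiting the algebraic structure of free groups rather than the local differential geometry of the representation variety.
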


We remark that in the context of Theorem \ref{theoproduct} the fact that \(\rho\) is Anosov is equivalent to the existence of a projection onto some factor which is quasi-isometric.
We also observe that Theorem \ref{theoproduct} remains valid when replacing $\Gamma$ by a surface group, see Remark \ref{rem: surface groups in rank one products}.

\subsection{Free groups in \(\SL_3(\R)\)}

For \(\sG = \SL_d(\R)\) one may take the simple roots \(\{\alpha_i\}_{i=1}^{d-1}\) so that they satisfy
\[\alpha_i(a(\rho(\gamma)) = \log s_{i}(\rho(\gamma)) - \log s_{i+1}(\rho(\gamma)).\] \noindent Here \(s_1(\rho(\gamma)) \ge \cdots \ge s_d(\rho(\gamma))\) denote the singular values of \(\rho(\gamma)\) with respect to the standard Euclidean inner product on \(\R^d\) (in particular, $s_1(\rho(\gamma))$ is the standard operator norm of the matrix).
In this case, if the Anosov condition (\ref{eqanosov}) is satisfied for some \(\alpha_i\), it is also satisfied by \(\alpha_{d+1-i}\).  
In particular, Anosov representations into \(\SL_3(\R)\) always satisfy the condition (\ref{eqanosov}) for \(\alpha=\alpha_1\).
On the other hand, in this context the representation \(\rho\) is quasi-isometric if and only if \begin{equation}\label{eq: QI in SLd}
\log s_1(\rho(\gamma))\geq a^{-1}\vert\gamma\vert-b
\end{equation}\noindent for all $\gamma\in\Gamma$ and for some $a\geq 1$ and $b\geq 0$.

\subsubsection{Non-Anosov Examples}\label{subsec: non anosov examples intro}

There exist quasi-isometric representations of the free group into \(\SL_3(\R)\) which are not in the closure of the set of Anosov representations.
There are two types of examples.
The first class was studied by Lahn \cite{lahn} and will be reviewed in \S~\ref{ss.reducible}.
These representations are always reducible.
In \S~\ref{subsec: ex more generators} we prove the following:

\begin{theorem}\label{teo: ejemplonoreducible}
Let $k>2$ be an integer and $\Gamma_k$ be a non abelian free group of rank $k$.
There exists a quasi-isometric representation \(\rho_k:\Gamma_k \to \SL_3(\R)\) whose image is Zariski dense, and such that \(\rho_k\) is not accumulated by Anosov representations.
Furthermore, the representations $\rho_k$ are not robustly quasi-isometric.
\end{theorem}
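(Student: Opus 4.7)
Starting from a Lahn-type rank-two reducible representation $\rho_2\colon\Gamma_2 \to \SL_3(\R)$ as discussed in \S~\ref{ss.reducible}, which is quasi-isometric, non-Anosov, and not accumulated by Anosov representations, we adjoin $k-2$ carefully chosen generators to obtain $\rho_k$. Let $V\subset\R^3$ be a $\rho_2$-invariant proper subspace, and fix generators $c_1,\ldots,c_k$ of $\Gamma_k$. Set $\rho_k(c_1)=\rho_2(a)$, $\rho_k(c_2)=\rho_2(b)$, and $\rho_k(c_i)=g_i$ for $i\ge 3$, where each $g_i\in\SL_3(\R)$ is chosen so that: (i) no $g_i$ preserves $V$, so as to force Zariski density; and (ii) each $g_i$ is loxodromic with attracting and repelling flags in sufficiently general position with respect to the proximal dynamics of $\rho_2(\Gamma_2)$ on $\mathbb{P}(\R^3)$, and with respect to each other.

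Zariski density is then immediate from (i): a proper algebraic subgroup of $\SL_3(\R)$ containing $\rho_2(\Gamma_2)$ would have to preserve $V$, but the $g_i$ do not. The quasi-isometric property of $\rho_k$ is the main technical point. Using (ii) we set up a ping-pong partition of the flag variety with disjoint attracting and repelling cones for the letters $g_i^{\pm 1}$, disjoint in turn from the active directions of $\rho_2$. A reduced word $\gamma\in\Gamma_k$ factors into maximal syllables lying either in $\Gamma_2=\langle c_1,c_2\rangle$ or in $\{g_i^{\pm 1}\}_{i\ge 3}$; the length $|\gamma|$ is the sum of syllable lengths. A ping-pong estimate applied to a suitable test vector shows that each syllable of length $\ell$ contributes additively of order $\ell$ to $\log\|\rho_k(\gamma)v\|$: for a $g_i^{\pm 1}$-syllable via the spectral gap of $g_i$, and for a $\Gamma_2$-syllable via Lahn's quasi-isometric lower bound for $\rho_2$. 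Summing yields \eqref{eq: QI in SLd}.

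The remaining properties are inherited from $\rho_2$ through the subgroup $\Gamma_2\subset\Gamma_k$. If $\rho_k$ were accumulated by Anosov representations, then so would $\rho_2$, since any finitely generated subgroup of a free group is quasi-convex and the Anosov property passes to quasi-convex subgroups; this contradicts the Lahn property. Similarly, any non-quasi-isometric perturbation of $\rho_2$ (which exists arbitrarily close to $\rho_2$ by Lahn) extends to a perturbation of $\rho_k$ keeping the $g_i$ unchanged; because a reduced word in $c_1^{\pm1},c_2^{\pm1}$ remains reduced in the larger alphabet, the $\Gamma_2$-word length and the $\Gamma_k$-word length agree on $\Gamma_2$, so any sequence witnessing quasi-isometric failure on $\Gamma_2$ witnesses it also on $\Gamma_k$.

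The main obstacle lies in the quasi-isometric verification. The dynamics of $\rho_2$ on $\mathbb{P}(\R^3)$ are \emph{not} proximal in the $\SL_3(\R)$ sense: they concentrate on $\mathbb{P}(V)$. One must choose the $g_i$ so that their ping-pong cones both skirt this degeneracy and permit an additive, syllable-by-syllable lower bound on $\log s_1$. Making this precise, and in particular identifying the right test vector and compatible cones that handle the interaction between the reducible dynamics of $\rho_2$ and the irreducible dynamics supplied by the $g_i$, is the heart of the argument.
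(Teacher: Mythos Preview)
Your approach is structurally different from the paper's and has a genuine gap in the ``not robustly quasi-isometric'' step.

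The paper does \emph{not} start from a reducible Lahn-type $\rho_2$; its basic two-generator example (constructed in \S\ref{ss.ZD}--\ref{ss.minimal}) is already Zariski dense, built from a loxodromic $f$ and a matrix $g$ whose restriction to a plane has \emph{complex} eigenvalues. The robust obstruction to being approximated by Anosov representations is precisely that $g^n$ has non-real eigenvalues, a property stable under perturbation. The rank-$k$ examples are then obtained by \emph{restricting} this $\rho_2$ to a finite-index subgroup $\Gamma_k\subset\Gamma_2$ chosen via Lemma~\ref{lem: free generators of subgroups} so that a free generating set contains $c_1=a$, $c_2=b^2$, $c_3=ba^pb^{-1}$. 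Quasi-isometry is then immediate (Remark~\ref{rem: finite index}), and the failure of robust quasi-isometry is shown by a direct two-step perturbation producing a unipotent commutator $[c_3^q,\gamma c_1\gamma^{-1}]$ using the special form of $c_1$ and $c_3$; no appeal to Theorem~\ref{t.main} is made.

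By contrast, your plan \emph{extends} a reducible $\rho_2$ by adjoining loxodromic generators. The fatal issue is your claim that ``non-quasi-isometric perturbations of $\rho_2$ exist arbitrarily close, by Lahn.'' Lahn does not prove this: his contribution is the Anosov characterization (Theorem~\ref{teo: lahn in examples}) and the fact that reducible suspensions of convex-cocompact representations are quasi-isometric. Proposition~\ref{p.equiv} in fact shows that derived-from-Barbot representations are \emph{robustly} quasi-isometric among plane-preserving perturbations, so breaking quasi-isometry requires leaving the reducible locus, and establishing that this can be done is exactly the content of Theorem~\ref{t.main} --- the paper's main result, proved later via Propositions~\ref{prop: criterion for non robust qi} and~\ref{prop-invariantplanejordan}. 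Your argument would thus be circular, or at best would make Theorem~\ref{teo: ejemplonoreducible} depend on the harder Theorem~\ref{t.main}. A second, lesser issue: for a reducible $\rho_2$ with all eigenvalues real, the assertion that it is not accumulated by Anosov representations is not automatic from Lahn's criterion (a small perturbation can split coincident real eigenvalues), so your starting hypothesis also needs care. Finally, the mixed ping-pong you sketch---handing off between the degenerate reducible dynamics of $\rho_2$ on $\mathbb{P}(V)$ and the proximal cones of the $g_i$---is exactly the kind of delicate interaction the paper's finite-index trick is designed to avoid.
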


The examples in Theorem \ref{teo: ejemplonoreducible} are all constructed by restricting a particular quasi-isometric representation $\rho_2:\Gamma_2\to\SL_3(\R)$ of a non abelian free group in two generators to an appropriate finite index subgroup. The representation $\rho_2$ is also not a limit of Anosov representations, but we don't know whether it is robustly quasi-isometric or not. 
Nevertheless, by applying a recent result by Dey-Hurtado \cite{DeyHurfulllimit} we can show that in any neighborhood of \(\rho_2\) there exists a representation $\rho_2':\Gamma_2\to\SL_3(\R)$ whose action on the space of full flags \(\F\) of \(\R^3\) is minimal\footnote{Recall that the space of full flags is the space of pairs \((L,P)\) where \(L\) is a one dimensional subspace of \(\R^3\), \(P\) is a two dimensional subspace, and \(L \subset P\).  
The space of flags is included in, and inherits its topology from, the product \(\RP^2 \times \Gr_2(\R^3)\), where \(\RP^2\) is the real projective plane, and \(\Gr_2(\R^3)\) is the Grassmannian manifold of two dimensional subspaces.}.
This is in constrast with what happens with the starting representation $\rho_2$, which in fact has a proper \textit{limit set} in $\mathcal{F}$.

Problem \ref{prob1} can be compared with the problem of understanding structural stability or robust transitivity of diffeomorphisms (see \cite[\S 4.4]{BPS}).
This is somewhat the viewpoint of Sullivan \cite{sullivan}. 
The basic example $\rho_2$ alluded above, while we are not able to show that it is not robustly quasi-isometric, we can show that the action on the flag space is not structurally stable (in fact, the topology of the limit set changes by perturbation). 
This is also related to Dey-Hurtado \cite[Question 1.1]{DeyHurfulllimit} which asks whether a discrete subgroup of $\SL_3(\R)$ with full limit is necessarily a lattice.
If that question admits a positive answer, it would imply that $\rho_2$ is not robustly faithful and discrete.

Before moving on to the statement of our main result we mention here that Guichard first constructed examples of quasi-isometric representations of a non-abelian free group into $\SL_2(\R)\times\SL_2(\R)$ which are not stable under small deformations.
In fact, they are accumulated by dense representations (see Gu\'eritaud-Guichard-Kassel-Wienhard \cite[Appendix A]{GGKW}).
After that, Tsouvalas \cite{tsouvalas} constructed examples of quasi-isometric representations into $\SL_d(\R)$ which are not limits of Anosov representations, for some specific values of $d$ and some specific word hyperbolic groups. 
In particular, \cite[Proposition 4.2]{tsouvalas} constructs examples of quasi-isometric representations of free groups into $\SL_6(\R)$ which are not accumulated by Anosov representations. 
These examples are strongly irreducible.
The basic example $\rho_2$ improves this result by finding Zariski dense examples in lower dimensions.

\subsubsection{Derived from Barbot representations}\label{sss derived barbot intro}

Our main result is to provide a positive answer to Problem \ref{prob1} for reducible representations into $\SL_3(\R)$.
As being Anosov and quasi-isometric are properties which are preserved under taking duals, we will always assume that our reducible representations preserve a hyperplane $P\subset\R^3$.
Following Lahn \cite{lahn} we will call such a representation a \emph{reducible suspension}.

For every reducible suspension $\rho:\Gamma\to\SL_3(\R)$ we may define
\[\rho_P:\Gamma \to \SL^{\pm}(P): \hspace{0,2cm} \rho_P(\gamma):= \frac{1}{|\det_P(\rho(\gamma))|^{\frac{1}{2}}}\cdot\rho(\gamma)\vert_{P}. \]
\noindent In the above formula, \(P\) is given the volume form inherited from the standard volume form in \(\R^3\), \(|\det_P(g)|\) denotes the Jacobian of a linear map $g$ restricted to \(P\), and $\SL^+(P)=\SL(P)$ (resp. $\SL^-(P)$) denotes the set of linear maps of $P$ whose determinant is equal to $1$ (resp. $-1$).
A reducible suspension will be called \emph{derived from Barbot} if \(\rho_P\) is quasi-isometric. 
This terminology is motivated by the fact that was Barbot \cite{barbot} who first constructed examples of reducible suspensions which are Anosov, these examples are all derived from Barbot. 
However, a general derived from Barbot representation is not necessarily Anosov, but obtained from an Anosov one by a very large deformation. This is reminiscent to the construction of \textit{derived from Anosov} diffeomorphisms in hyperbolic dynamics \cite{mane}.

In \S~\ref{ss.proofpropequiv} we prove the following.

\begin{proposition}\label{p.equiv}
Let $\Gamma$ be a finitely generated non-abelian free group and \(\rho:\Gamma \to \SL_3(\R)\) be a reducible suspension preserving some hyperplane $P$.
Then the following are equivalent:
 \begin{enumerate}
  \item the representation \(\rho\) is quasi-isometric,
  \item the representation \(\rho\) is derived from Barbot,
  \item the representation \(\rho\) is robustly quasi-isometric among representations preserving \(P\).
 \end{enumerate}
\end{proposition}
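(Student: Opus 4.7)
The plan is first to fix a basis $(e_1,e_2,e_3)$ of $\R^3$ with $P=\langle e_1,e_2\rangle$, so that
\[
\rho(\gamma)=\begin{pmatrix} A(\gamma) & v(\gamma)\\ 0 & d(\gamma)\end{pmatrix}
\]
with $A(\gamma)\in\GL(P)$, $d(\gamma)=(\det A(\gamma))^{-1}$, and a cocycle term $v(\gamma)\in P$. Write $t(\gamma):=\log|\det A(\gamma)|$ and $\lambda(\gamma):=\log s_1(\rho_P(\gamma))\geq 0$, so the singular values of $A(\gamma)$ are $e^{t(\gamma)/2\pm\lambda(\gamma)}$ and $|d(\gamma)|=e^{-t(\gamma)}$. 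The implication (3)$\Rightarrow$(1) is tautological. For (2)$\Rightarrow$(1) I will use $s_1(\rho(\gamma))\ge\max\{\sigma_1(A(\gamma)),|d(\gamma)|\}$ together with a convex combination (weights $2/3$ and $1/3$) that eliminates $t(\gamma)$, obtaining $\log s_1(\rho(\gamma))\ge \tfrac{2}{3}\lambda(\gamma)$. For (2)$\Rightarrow$(3) I will use that QI for representations into the rank one group $\SL^\pm(P)$ is an open condition (equivalent to Anosov in rank one), that $\rho\mapsto\rho_P$ is continuous among representations preserving $P$, and then apply (2)$\Rightarrow$(1) to a neighbourhood.

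The main obstacle is the implication (1)$\Rightarrow$(2). The natural approach through Cauchy interlacing, $\sigma_1(A(\gamma))\ge s_2(\rho(\gamma))\ge \sigma_2(A(\gamma))$, gives only $2\lambda(\gamma)\le \log s_1(\rho(\gamma))-\log s_3(\rho(\gamma))$, which is an \emph{upper} bound on $\lambda(\gamma)$ and therefore insufficient: the unipotent cocycle $v$ can in principle dominate $s_1(\rho(\gamma))$, so operator norms alone do not pin down the factor $\rho_P$. My plan is to pass instead to spectral radii: applying the QI hypothesis to $\gamma^k$, dividing by $k$ and letting $k\to\infty$, the spectral radius formula together with positivity of the stable word length $|\gamma|_\infty:=\lim_k |\gamma^k|/k$ in the free group yields
\[
\log\mathrm{sp}(\rho(\gamma))\ge c|\gamma|_\infty\qquad\text{for all }\gamma\in\Gamma,
\]
where $\mathrm{sp}$ denotes spectral radius and $c>0$ is the QI constant. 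Since $\rho(\gamma)$ is block upper triangular, $\mathrm{sp}(\rho(\gamma))=\max\{\mathrm{sp}(A(\gamma)),|d(\gamma)|\}$.

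The key reduction is that the character $d$ is trivial on the commutator subgroup $[\Gamma,\Gamma]$, since its target is abelian. Thus for $\gamma\in[\Gamma,\Gamma]$ one has $A(\gamma)=\rho_P(\gamma)\in\SL^\pm(P)$, and the spectral estimate specialises to $\log\mathrm{sp}(\rho_P(\gamma))\ge c|\gamma|_\infty$ for all $\gamma\in[\Gamma,\Gamma]$. Since $\rho_P$ takes values in the rank one group $\SL^\pm(P)$, if it fails to be QI then either (a) it is non-faithful, (b) it has non-discrete image, or (c) it admits a parabolic element; I will derive a contradiction in each case. For (a) and (b), I choose $\gamma_n\in\Gamma$ with $\rho_P(\gamma_n)\to I$ in $\SL^\pm(P)$ and set $\delta_n:=[\gamma_n,\eta]$ for a generator $\eta$ not commuting with $\gamma_n$ (available because centralisers in a free group of rank $\ge 2$ are cyclic): then $\delta_n\in[\Gamma,\Gamma]$, $\rho_P(\delta_n)\to I$, and $|\delta_n|_\infty\to\infty$, contradicting the bound. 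For (c), with $\gamma_0$ parabolic and $\eta$ suitably chosen, an explicit computation in $\SL_2(\R)$ yields $\mathrm{tr}([\rho_P(\gamma_0)^n,\rho_P(\eta)])=2+O(n^2)$, whence $\log\mathrm{sp}(\rho_P([\gamma_0^n,\eta]))=O(\log n)$, while $|[\gamma_0^n,\eta]|_\infty$ grows linearly in $n$.
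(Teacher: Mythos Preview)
Your argument is correct, and the implications $(3)\Rightarrow(1)$, $(2)\Rightarrow(1)$ and $(2)\Rightarrow(3)$ are essentially the paper's own (Lemma~\ref{lem:DFBisQI} and Corollary~\ref{cor:DFBandrobQI}). One small slip: in cases~(a) and~(b) the claim ``$|\delta_n|_\infty\to\infty$'' is unnecessary and not always true (in case~(a) the sequence $\delta_n=[\gamma,\eta]$ is constant). What you actually need---and what you have---is $\delta_n\neq 1$, hence $|\delta_n|_\infty\ge 1$, while $\rho_P(\delta_n)\to I$; this already contradicts $\log\mathrm{sp}(\rho_P(\delta_n))\ge c|\delta_n|_\infty\ge c>0$.

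For the crucial implication $(1)\Rightarrow(2)$ your route is genuinely different from the paper's. The paper (Lemma~\ref{lem: restriction to plane purely hyperbolic} and Corollary~\ref{cor:redsuspensionQIisDFB}) shows directly that $\rho_P(\gamma)$ is hyperbolic for every $\gamma\neq 1$: if $\rho_P(\gamma)$ is $\pm\mathrm{id}$, parabolic or elliptic, one conjugates $\rho$ so that $\rho(\gamma)$ becomes block-diagonal and then checks by an explicit $3\times 3$ matrix computation that $\|\rho(\gamma^n\eta\gamma^{-n})\|$ grows only polynomially in $n$, contradicting the quasi-isometry of $\rho$ itself. You instead pass to the spectral inequality $\log\mathrm{sp}(\rho(\gamma))\ge c|\gamma|_\infty$, observe that on $[\Gamma,\Gamma]$ the character $d$ is trivial so this becomes a lower bound on $\mathrm{sp}(\rho_P)$, and run the trichotomy there with commutators $[\gamma_0^n,\eta]$ in place of conjugates. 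The payoff of your reduction is that the cocycle $v(\gamma)$ never enters (spectral radius of a block-triangular matrix ignores the off-diagonal block), so no preliminary conjugation is needed; the paper's computation, by contrast, is completely elementary once that conjugation is made. Your case~(c) does rely on two facts you only gesture at: that $\eta$ can be chosen with $\rho_P(\eta)$ not fixing the parabolic fixed point (this needs $\rho_P(\Gamma)$ non-elementary, which follows from faithfulness already secured in case~(a)), and that $|[\gamma_0^n,\eta]|_\infty$ grows linearly in $n$ (this follows because $\gamma_0$ and $\eta$, being non-commuting, freely generate an undistorted rank-two subgroup of $\Gamma$). Both are standard, but should be made explicit.
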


The above result is  partially proved in \cite{lahn} in a more general setting.
In particular, Lahn proves that a reducible suspension of a quasi-isometric representation is always quasi-isometric \cite[Proposition 3.4]{lahn}.
Moreover, if $\rho$ is faithful and discrete then $\rho_P$ is also faithful and discrete by \cite[Proposition 3.1]{lahn}.
Hence, the only novelty in Proposition \ref{p.equiv} is the fact that $\rho$ being quasi-isometric implies that $\rho_P$ is also quasi-isometric (see Corollary \ref{cor:redsuspensionQIisDFB}), compare with \cite[Proposition 4.2]{lahn}.

The main result of this note is to provide an affirmative answer to Problem \ref{prob1} for reducible suspensions.
By Proposition \ref{p.equiv}, we may restrict our attention to derived from Barbot representations.
Lahn \cite[Theorem 2]{lahn} gives a description of which derived from Barbot representations are Anosov. 
Starting from this result we prove the following.

\begin{theorem}\label{t.main}

Let $\Gamma$ be a finitely generated non-abelian free group and \(\rho:\Gamma \to \SL_3(\R)\) be a derived from Barbot representation.  
Assume moreover that $\rho_P$ preserves the orientation of $P$.
Then \(\rho\) is robustly quasi-isometric if and only if it is Anosov. 
\end{theorem}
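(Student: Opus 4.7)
The forward direction is immediate: Anosov representations satisfy (\ref{eqanosov}) and therefore (\ref{eq: QI in SLd}), and being Anosov is open in the pointwise-convergence topology (see e.g.\ \cite[Corollary 5.10]{BPS}), so Anosov implies robustly quasi-isometric. I argue the converse by contrapositive: assuming $\rho$ is derived from Barbot with $\rho_P$ orientation-preserving but $\rho$ is not Anosov, I construct arbitrarily small perturbations of $\rho$ that fail to be quasi-isometric. In a basis adapted to the invariant hyperplane $P$ write
\[\rho(\gamma)=\begin{pmatrix} A(\gamma) & v(\gamma) \\ 0 & \lambda(\gamma)\end{pmatrix},\qquad \lambda(\gamma)=\det(A(\gamma))^{-1}.\]
The orientation hypothesis forces $\det A(\gamma)>0$, so $\psi(\gamma):=\log\det A(\gamma)$ is a well-defined real homomorphism $\Gamma\to\RR$ and $\rho_P=A/e^{\psi/2}$ takes values in $\SL(P)\cong\SL_2(\RR)$. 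By Proposition \ref{p.equiv}, $\rho_P$ is quasi-isometric; by the $\SL_2(\RR)$ rigidity of \S~\ref{ss.SL2} it is therefore Anosov (convex cocompact). By Proposition \ref{p.equiv}(3) the representation $\rho$ is already robustly quasi-isometric among $P$-preserving perturbations, so any destabilizing perturbation must break the invariance of $P$.

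The key input is Lahn's characterization \cite[Theorem 2]{lahn} of which derived from Barbot representations are Anosov, which in our setting amounts to a resonance condition between $\psi$ and the Cartan projection $\mu_P:=\log\|\rho_P(\cdot)\|$. The almost-singular values of $\rho(\gamma)$ have log-sizes $\psi/2+\mu_P$, $\psi/2-\mu_P$ and $-\psi$, and failure of Anosov amounts to the top two colliding, i.e.\ the appearance of a sequence $(\gamma_n)\subset\Gamma$ with $|\gamma_n|\to\infty$ such that
\[\log s_1(\rho(\gamma_n))-\log s_2(\rho(\gamma_n))=o(|\gamma_n|),\]
equivalently $\psi(\gamma_n)+\tfrac{2}{3}\mu_P(\gamma_n)=o(|\gamma_n|)$. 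Along this sequence the top eigendirection of $\rho(\gamma_n)$ oscillates between a line in $P$ and the invariant transverse line. I perturb on a single free generator $a$ of $\Gamma$ by $\rho_\epsilon(a):=\exp(\epsilon X)\,\rho(a)$ with $X\in\mathfrak{sl}_3(\RR)$ coupling $P$ with its complement, keeping $\rho_\epsilon(b)=\rho(b)$ on the other generators; this breaks $P$-invariance as required. The plan is then to build words $\eta_n=\eta_n(\epsilon)$ as products of conjugates of the $\gamma_n$ by the remaining generators, arranged so that in a frame diagonalizing $\rho_\epsilon(\gamma_n)$ on the $2$-plane $V_n$ spanned by the colliding directions, the full composition behaves as an approximate isometry of $V_n$; this then yields $\log s_1(\rho_\epsilon(\eta_n))=o(|\eta_n|)$, contradicting quasi-isometry.

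The principal obstacle is this last construction: converting the qualitative collision of singular values into a genuine multiplicative cancellation along explicit words, with quantitative control of $|\eta_n|$ against $\log s_1(\rho_\epsilon(\eta_n))$. A natural route is to interpret the iterates as a random walk on the projective line of $V_n$ whose top Lyapunov exponent vanishes because the perturbation has averaged the two competing rates of expansion; a more hands-on alternative is an explicit block-matrix computation at each scale. Either way the delicacy is that $V_n$ and its diagonalizing frame depend on $n$ and on $\epsilon$, and that $\rho_\epsilon$ destroys the invariant splitting used to analyze $\rho$. The orientation hypothesis on $\rho_P$ enters here by ensuring that $\psi$ is a genuine real-valued character and that the resonance and associated frames deform continuously in $\epsilon$, so that cancellation obtained at $\epsilon=0$ survives to a one-sided neighborhood of the unperturbed representation.
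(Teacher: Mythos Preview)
Your proposal is not a proof: it is a plan whose central step is left open. You yourself flag ``the principal obstacle is this last construction'' and then offer two vague alternatives (a random-walk/Lyapunov argument, or an unspecified block computation) without carrying either one out. Converting an asymptotic collision of singular values along a sequence $(\gamma_n)$ into words $\eta_n$ with $\log s_1(\rho_\epsilon(\eta_n))=o(|\eta_n|)$ is genuinely hard: once you perturb off the invariant plane you lose the block-triangular structure that made the frames $V_n$ tractable, and there is no reason a priori that the two competing expansion rates average to zero along any specific sequence of words. Your appeal to a ``random walk on the projective line of $V_n$ whose top Lyapunov exponent vanishes'' is not justified, and in any case vanishing of a Lyapunov exponent for a random product does not by itself produce deterministic words with sublinear growth.

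The paper's argument avoids this obstacle entirely by working with \emph{exact} eigenvalue coincidences rather than asymptotic ones. First (Proposition~\ref{prop-invariantplanejordan}), using Lahn's criterion and a careful analysis involving length-minimizing multicurves on the convex-cocompact surface $\HH^2/\rho_P(\Gamma)$, one finds an arbitrarily small $P$-preserving perturbation $\rho'$ and free generators $a,b$ such that some $\omega=a^mb^n$ satisfies $\lambda_1(\rho'(\omega))=\lambda_\perp(\rho'(\omega))\neq 1$ exactly, with the remaining eigenline $L_0$ lying in $\Ecs(\rho'(a))$. Second (Proposition~\ref{prop: criterion for non robust qi}), from this configuration one constructs, by deforming $\Eu(\rho'(a))$ through $\Ecu(\rho'(a))$ and using an intermediate-value argument, a further small perturbation for which an explicit commutator $[\omega^q,a^p\omega^q a^{-p}]$ is \emph{unipotent}. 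Unipotent elements immediately kill quasi-isometry (Remark~\ref{rem: unipotents}). The orientation hypothesis is used in the first step, not for continuity of frames as you suggest, but to guarantee that $\rho_P$ lands in $\SL_2(\R)$ so that the surface $\HH^2/\rho_P(\Gamma)$ is orientable and the simple-closed-curve arguments of Proposition~\ref{prop. generator in V} go through.
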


We point out here that examples of robustly quasi-isometric representations which fail to be Anosov were found recently by Tsouvalas \cite{tsouvalas2}.
These are representations of free products of uniform lattices in $\mathsf{Sp}(n,1)$ into $\SL_d(\R)$ for large enough $d$. 
Even though these representations admit non trivial deformations, these  deformations are quite special thanks to Corlette's Archimedean Superrigidity \cite{corlette}.
This is the main reason why we have decided to focus on free groups in Problem \ref{prob1}, in order to have enough flexibility to work with. 
We also believe that Problem \ref{prob1} is interesting when replacing $\Gamma$ by a (closed) surface group, as these are also quite flexible (c.f. Remark \ref{rem: surface groups in rank one products}).
In this direction, we mention that similar questions were studied by Danciger-Gu\'eritaud-Kassel-Lee-Marquis \cite{DGKLMconvexcoxeter} for some Coxeter groups embedded into some $\SL_d(\R)$.
In particular, in these cases the interior of the set of representations which arise as a reflection group is shown to coincide with some space of Anosov representations (see \cite[Corollary 1.18]{DGKLMconvexcoxeter}). 
We point out also that the authors found an example of a robustly faithful and discrete representation of a word hyperbolic (Coxeter) group which is not Anosov, see \cite[Remark 1.19]{DGKLMconvexcoxeter}.

%In particular, in certain cases the interior of representations of such groups arising from reflections of the projective space is shown to coincide with the space of $\{\alpha_1\}$-Anosov representations, see \cite[Corollaries 1.13 \& 1.15, and Fact 1.17]{DGKLMconvexcoxeter}.

\subsection{Organization of the paper}

In Section \ref{sec: products} we prove Theorem \ref{theoproduct}, for which we first address Problem \ref{prob2} for $\sG=\SL_2(\R)$ (Subsection \ref{ss.SL2}).
In Section \ref{sec: generalities QI} we recall generalities about quasi-isometric representations into $\SL_3(\R)$, and prove Proposition \ref{prop: criterion for non robust qi}. This proposition is one of the two main ingredients in the proof of Theorem \ref{t.main}, as it provides a sufficient condition for a representation not to be robustly quasi-isometric.
Theorem \ref{teo: ejemplonoreducible} is proven in Section \ref{sec: examples}, more concretely in \S~\ref{subsec: ex more generators}, after constructing the basic example $\rho_2$ in Subsections \ref{ss.ZD} and \ref{ss.minimal}.
Previous to that, in \S~\ref{ss.reducible} we recall recent work by Lahn \cite{lahn} discussing reducible examples of quasi-isometric representations which are not limits of Anosov representations. 
In Section \ref{sec. derived from barbot} we prove Theorem \ref{t.main}.
We first prove Proposition \ref{p.equiv} in Subsection \ref{ss.proofpropequiv}.
In Subsection \ref{ss.multiplicity} we prove Proposition \ref{prop-invariantplanejordan}, which is the second main ingredient in the proof of Theorem \ref{t.main}, as it shows that a non-Anosov reducible quasi-isometric representation can be perturbed so that a specific element has a repeated eigenvalue.
We then observe in Subsection \ref{subsec: proof of main thm} that this proposition together with Proposition \ref{prop: criterion for non robust qi} readily imply Theorem \ref{t.main}.

\subsection{Acknowledgements}
The authors are grateful to Subhadip Dey, Sebasti\'an Hurtado, Fanny Kassel, Max Lahn and Konstantinos Tsouvalas for interesting discussions and comments. Besides interesting discussions, Andr\'es Sambarino suggested to look at products of rank 1 groups as a first problem.

%%%%%%%%%%%%%%%%%%%
\section{Products of $\SL_2$'s}\label{sec: products}

\subsection{Robust faithfulness in  \(\SL_2(\R)\)}\label{ss.SL2}

Here we show that robustly faithful representations of a free group into $\SL_2(\R)$ are Anosov (note that this does not follow from Sullivan's Theorem \cite{sullivan}).
This will be used in the proof of Theorem \ref{theoproduct}.

We begin with a preparatory lemma.

\begin{lemma}\label{lem: commutator is submersion}
Let $g_0$ and $h_0$ be non commuting hyperbolic elements in $\SL_2(\R)$.
Then the commutator $$[\cdot,\cdot]:\SL_2(\R)\times\SL_2(\R)\to\SL_2(\R); [g,h]:=ghg^{-1}h^{-1}$$ \noindent is a submersion at $(g_0,h_0)$.
\end{lemma}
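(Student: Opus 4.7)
The plan is to show that the differential $d[\cdot,\cdot]_{(g_0,h_0)}$ is surjective onto the $3$-dimensional tangent space $T_c\SL_2(\R)$, where $c=[g_0,h_0]$. Varying one factor at a time along $t\mapsto [g_0\exp(tX),h_0]$ and $s\mapsto [g_0,h_0\exp(sY)]$, then trivializing $T_c\SL_2(\R)\cong\mathfrak{sl}_2(\R)$ by right translation with $c^{-1}$, a direct expansion of the commutator and some cancellations give the formula
\[
(X,Y)\ \longmapsto\ \mathrm{Ad}(g_0)\Bigl[\bigl(\mathrm{Id}-\mathrm{Ad}(h_0)\bigr)X\ +\ \mathrm{Ad}(h_0)\bigl(\mathrm{Id}-\mathrm{Ad}(g_0^{-1})\bigr)Y\Bigr].
\]
Since $\mathrm{Ad}(g_0)$ is a linear automorphism of $\mathfrak{sl}_2(\R)$, surjectivity reduces to showing that
\[
V\ :=\ (\mathrm{Id}-\mathrm{Ad}(h_0))(\mathfrak{sl}_2(\R))\ +\ \mathrm{Ad}(h_0)(\mathrm{Id}-\mathrm{Ad}(g_0^{-1}))(\mathfrak{sl}_2(\R))
\]
is all of $\mathfrak{sl}_2(\R)$.

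The key structural input is that for any hyperbolic $k\in\SL_2(\R)$ the operator $\mathrm{Ad}(k)$ is diagonalizable with eigenvalues $\lambda_k^{\pm 2},1$. Hence the fixed subspace $\ker(\mathrm{Ad}(k)-\mathrm{Id})$ is the one-dimensional centralizer Lie algebra $\mathfrak{l}_k$, and the image $(\mathrm{Id}-\mathrm{Ad}(k))(\mathfrak{sl}_2(\R))$ coincides with its two-dimensional Killing-orthogonal complement $\mathfrak{l}_k^{\perp}$. Using the $\mathrm{Ad}$-invariance of the Killing form, the second summand of $V$ equals $\mathrm{Ad}(h_0)(\mathfrak{l}_{g_0}^{\perp})=(\mathrm{Ad}(h_0)\mathfrak{l}_{g_0})^{\perp}=\mathfrak{l}_{h_0 g_0 h_0^{-1}}^{\perp}$. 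So $V$ is the sum of two codimension-one subspaces of $\mathfrak{sl}_2(\R)$ and fails to be the whole space precisely when these two subspaces coincide, i.e.\ when $\mathfrak{l}_{h_0}=\mathfrak{l}_{h_0 g_0 h_0^{-1}}$.

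To conclude, two hyperbolic elements of $\SL_2(\R)$ share their centralizer Lie algebra if and only if they share their translation axis in $\HH^2$, equivalently their fixed-point set on $\partial\HH^2$. The equality $\mathfrak{l}_{h_0}=\mathfrak{l}_{h_0 g_0 h_0^{-1}}$ thus reads $\mathrm{Fix}(h_0)=h_0\cdot\mathrm{Fix}(g_0)$; applying $h_0^{-1}$ (which stabilizes $\mathrm{Fix}(h_0)$) gives $\mathrm{Fix}(h_0)=\mathrm{Fix}(g_0)$, which for hyperbolic elements is equivalent to $g_0h_0=h_0g_0$. Since by assumption $g_0$ and $h_0$ do not commute, we get $V=\mathfrak{sl}_2(\R)$, as required. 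The only real obstacle is keeping left versus right translations straight in the derivative computation; once the formula for the differential is in hand, the rest is a transparent application of the eigenstructure of $\mathrm{Ad}(k)$ for hyperbolic $k$ together with the classical characterization of commuting hyperbolic isometries of $\HH^2$.
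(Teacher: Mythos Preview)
Your proof is correct and follows essentially the same strategy as the paper: compute the differential of the commutator map, factor out an outer $\mathrm{Ad}$-automorphism, and reduce surjectivity to showing that two $2$-dimensional images inside $\mathfrak{sl}_2(\R)$ do not coincide. The paper carries this out by diagonalizing $g_0$ and computing $W_{g_0}=\mathrm{im}(\mathrm{Ad}_{g_0}-\mathrm{Id})$ explicitly, then asserting (``it follows easily'') that the second image differs because $h_0$ is hyperbolic and not diagonal. Your packaging is more conceptual: identifying $\mathrm{im}(\mathrm{Id}-\mathrm{Ad}(k))$ with the Killing-orthogonal $\mathfrak{l}_k^{\perp}$ and then using $\mathrm{Ad}$-invariance of the Killing form turns the comparison into $\mathfrak{l}_{h_0}\neq\mathfrak{l}_{h_0 g_0 h_0^{-1}}$, which you cleanly reduce to $\mathrm{Fix}(g_0)\neq\mathrm{Fix}(h_0)$ via the fixed-point characterization of commuting hyperbolic isometries. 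This makes the final step transparent where the paper leaves it implicit, and avoids any choice of basis; conversely, the paper's explicit computation has the virtue of being entirely self-contained without invoking the Killing form or facts about axes in $\HH^2$.
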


\begin{proof}
This is a direct computation, we include a proof for completeness.

Let $\liesl$ be the Lie algebra of $\SL_2(\R)$, that is, the vector space of traceless $2\times 2$ real matrices. 
For $g\in\SL_2(\R)$, the tangent space $T_g\SL_2(\R)$ is $g\cdot\liesl$. 

Let $X,Y\in\liesl$.
The derivative of $[\cdot,\cdot]$ at $(g_0,h_0)$ evaluated on the pair $(g_0\cdot X,h_0\cdot Y)$ is given by $$[g_0,h_0]\cdot \left(\text{Ad}_{h_0g_0h_0^{-1}}(X)+\text{Ad}_{h_0g_0}(Y)-\text{Ad}_{h_0g_0}(X)-\text{Ad}_{h_0}(Y)\right),$$ \noindent where $\text{Ad}:\SL_2(\R)\to\text{Aut}(\liesl)$ is the adjoint representation. 
In particular, it suffices to show that the linear map $$(X,Y)\mapsto \text{Ad}_{g_0}\left(\text{Ad}_{h_0^{-1}}(X)-X\right)+\left(\text{Ad}_{g_0}(Y)-Y\right)$$ \noindent is surjective. 

Without loss of generality we may suppose that $g_0$ is diagonal, with eigenvalues $\mu\neq \pm 1$ and $\mu^{-1}$. 
Then $\text{Ad}_{g_0}$ is diagonalizable on the basis $\left\lbrace\left(\begin{smallmatrix}
1 & 0 \\
0 & -1
\end{smallmatrix}\right),\left(\begin{smallmatrix}
0 & 1 \\
0 & 0
\end{smallmatrix}\right),\left(\begin{smallmatrix}
0 & 0 \\
1 & 0
\end{smallmatrix}\right)\right\rbrace$, with respective eigenvalues $1$, $\mu^2$ and $\mu^{-2}$.
The image of $Y\mapsto \text{Ad}_{g_0}(Y)-Y$ is therefore $$W_{g_0}:=\text{span}\left\lbrace\left(\begin{matrix}
0 & 1 \\
0 & 0
\end{matrix}\right),\left(\begin{matrix}
0 & 0 \\
1 & 0\end{matrix}\right)\right\rbrace.$$
\noindent Moreover, as $h_0$ is hyperbolic but not diagonal (because it does not commute with $g_0$), it follows easily that $W_{g_0}$ is different from the image of $$X\mapsto \text{Ad}_{g_0}\left(\text{Ad}_{h_0^{-1}}(X)-X\right),$$ \noindent which is again two-dimensional. 
This completes the proof.
\end{proof}

\begin{proposition}\label{prop:robustfaithfulSL2}
Let \(\Gamma\) be a finitely generated non-abelian free group and \(\rho: \Gamma \to \SL_2(\R)\) a robustly faithful representation. Then \(\rho\) is Anosov (equivalently, quasi-isometric). 
\end{proposition}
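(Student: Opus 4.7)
The plan is to show that robust faithfulness forces $\rho(\Gamma)$ to be a discrete, non-elementary, purely hyperbolic subgroup of $\SL_2(\R)$; such subgroups are precisely the images of convex cocompact (Schottky) representations of free groups, which in rank one coincide with Anosov representations. Non-elementariness is immediate, since every elementary subgroup of $\SL_2(\R)$ is solvable but a non-abelian free group admits no faithful solvable representation.

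For pure hyperbolicity, suppose some $\rho(\gamma)$ with $\gamma\neq 1$ fails to be hyperbolic. If $\rho(\gamma)$ has finite order $n$ then $\rho(\gamma^n)=I$ with $\gamma^n\neq 1$ directly contradicts faithfulness. Otherwise $\rho(\gamma)$ is elliptic of infinite order or parabolic, hence approximated in $\SL_2(\R)$ by finite-order elliptic matrices. A small perturbation of the homomorphism (for which the evaluation $\rho'\mapsto\rho'(\gamma)$ has surjective derivative at our non-elementary $\rho$, as one checks by varying a single generator appearing in the reduced expression for $\gamma$) produces $\rho'$ with $\rho'(\gamma)$ of finite order, contradicting robust faithfulness.

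For discreteness, every closed subgroup of $\SL_2(\R)$ whose identity component has dimension $1$ or $2$ is contained in the normalizer of that identity component, which is solvable in each case. Hence non-elementariness and faithfulness force a non-discrete $\rho(\Gamma)$ to be dense in $\SL_2(\R)$. I can then find $\gamma_k\neq 1$ with $\rho(\gamma_k)\to I$ and $\rho(\gamma_k)$ hyperbolic (approaching $I$ from inside the open hyperbolic cone of $\SL_2(\R)$). Picking a generator $g$ with $\rho(g)$ not commuting with $\rho(\gamma_k)$ --- such $g$ exists, since otherwise $\rho(\Gamma)$ would lie in an abelian subgroup --- Lemma \ref{lem: commutator is submersion} asserts that the commutator map is a submersion at $(\rho(\gamma_k),\rho(g))$. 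Since $[\rho(\gamma_k),\rho(g)]\to I$, an arbitrarily small perturbation of $\rho$ produces $\rho'$ with $\rho'([\gamma_k,g])=I$, while $[\gamma_k,g]\neq 1$ in $\Gamma$, contradicting robust faithfulness.

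Once $\rho$ is discrete, non-elementary, and purely hyperbolic, the quotient $\HH^2/\rho(\Gamma)$ is a complete hyperbolic surface with finitely generated free fundamental group and no cusps; such a surface is convex cocompact, so $\rho$ is Anosov. The main obstacle is realizing the required perturbation as a genuine variation of $\rho$ in $\mathrm{Hom}(\Gamma,\SL_2(\R))$, which reduces to showing that the joint evaluation $\rho'\mapsto(\rho'(\gamma_k),\rho'(g))$ is submersive at the irreducible $\rho$ --- a standard fact about representation varieties of free groups that can be verified by perturbing the generators not involved in $\gamma_k$ independently of $\rho(g)$.
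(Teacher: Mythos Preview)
Your overall strategy---show that $\rho$ is discrete, non-elementary, and purely hyperbolic, then conclude convex cocompactness---matches the paper's. The gap is in how you actually produce the perturbations. In both your pure-hyperbolicity and discreteness steps you assert that certain evaluation maps $\rho'\mapsto\rho'(\gamma)$ (or the joint evaluation $\rho'\mapsto(\rho'(\gamma_k),\rho'(g))$) are submersive at $\rho$, justified only by ``varying a single generator'' or by calling it a ``standard fact.'' This is not a proof: varying one generator $a$ yields a linear map $\mathfrak{sl}_2\to\mathfrak{sl}_2$ built out of adjoints of subwords of $\gamma$, and there is no a priori reason for it to be surjective. You also invoke Lemma~\ref{lem: commutator is submersion} without checking that the chosen generator $\rho(g)$ is hyperbolic (the lemma requires both inputs hyperbolic), and there is a hidden uniformity issue in your step~3: as $\rho(\gamma_k)\to I$ the commutator map degenerates, so the size of the perturbation of $(\rho(\gamma_k),\rho(g))$ needed to hit the identity need not stay controlled relative to a fixed neighborhood of $\rho$ in $\mathrm{Hom}(\Gamma,\SL_2(\R))$.

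The paper sidesteps all of this with a much lighter device. For discreteness: if $\overline{\rho(\Gamma)}=\SL_2(\R)$ then some $\gamma\neq 1$ has $\mathrm{tr}(\rho(\gamma))\in(-2,2)$; since $\rho'\mapsto\mathrm{tr}(\rho'(\gamma))$ is a \emph{non-constant polynomial} on $\SL_2(\R)^k$, it is nowhere locally constant, so an arbitrarily small perturbation lands the trace at some $2\cos(k\pi/n)$ and $\rho'(\gamma)$ acquires finite order. For pure hyperbolicity the paper first establishes discreteness (so only parabolics remain) and then reads off from the geometry of the quotient surface that any cusp element is either a free generator (genus zero) or of the form $[a_1,b_1]\cdots[a_g,b_g]c_1\cdots c_n$ with $\rho(a_1),\rho(b_1)$ hyperbolic, so that Lemma~\ref{lem: commutator is submersion} applies cleanly. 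The trace-polynomial argument would also repair your step~2 directly (one only needs the trace, not the full evaluation, to move), and once step~2 is fixed your step~3 becomes redundant: a faithful, purely hyperbolic, non-elementary subgroup of $\SL_2(\R)$ is automatically discrete.
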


\begin{proof}

We first observe that $\rho$ must be discrete. 
Otherwise, the Euclidean closure \(\overline{\rho(\Gamma)}\) of $\rho(\Gamma)$ would be either $\SL_2(\R)$ or contained in a (virtually) solvable subgroup of $\SL_2(\R)$.
The latter case is ruled out, as it would imply that $\rho$ has a non trivial kernel.
Hence \(\overline{\rho(\Gamma)} = \SL_2(\R)\) and there is an element  $\gamma\in\Gamma\setminus\{1\}$ so that trace $\mathrm{tr}(\rho(\gamma))$ belongs to the interval $(-2,2)$. Since the trace function \(\rho' \mapsto \mathrm{tr}(\rho'(\gamma))\) is a non-constant polynomial in the entries of the image of a free generating set of \(\Gamma\), it must take a value of the form \(2 \cos(\frac{k \pi}{n})\) for some integers $k$ and $n$ and arbitrarily small perturbations \(\rho'\) of \(\rho\). Thus we would get a small perturbation of $\rho$ which is not faithful contradicting our assumption. 

Now that we know that \(\rho(\Gamma)\) is discrete, observe that the induced representation $\Gamma\to\PSL_2(\R)$ is also discrete. 
It is moreover robustly faithful and, to finish the proof, it suffices to show that this induced representation is quasi-isometric. 
By abuse of notations, we still denote this induced representation by $\rho$.

Consider the hyperbolic surface $\HH^2/\rho(\Gamma)$, which is geometrically finite as $\Gamma$ is finitely generated (see e.g. \cite[Theorem 4.6.1]{skatok}). 
We will show that every element in $\rho(\Gamma)$ is hyperbolic, and this will finish the proof (see e.g. \cite[Theorems 4.8 \& 4.13 and Corollary 4.17]{dalbogeodesichorocyclic}).

%This will finish the proof as it implies that $\HH^2/\rho(\Gamma)$ has no cusps and therefore \(\rho\) is convex cocompact, which is equivalent to being quasi-isometric. 

Suppose by contradiction that there is some parabolic element corresponding to some cusp in $\HH^2/\rho(\Gamma)$. There are two cases, namely, either $\HH^2/\rho(\Gamma)$ has genus $g=0$, or $g>0$.

If $g=0$, we may suppose that the cusp corresponds to a free generator of $\Gamma$. 
It can then be perturbed to a finite order element in $\PSL_2(\R)$, contradicting the fact that $\rho$ is robustly faithful. On the other hand, if $g>0$ then the cusp corresponds to an element $\gamma$ of $\Gamma$ so that there is a free generating set $$\{a_1,b_1,\dots,a_g,b_g,c_1,\dots,c_n\}\subset\Gamma$$ \noindent such that $$\gamma=[a_1,b_1]\dots[a_g,b_g]c_1\dots c_n.$$
\noindent We can assume that both $\rho(a_1)$ and $\rho(b_1)$ are hyperbolic, otherwise we conclude as in the previous case. 
Since $\rho$ is faithful, by Lemma \ref{lem: commutator is submersion} we may perturb $\rho$ only perturbing the image of $a_1$ and $b_1$, in such a way that $\rho(\gamma)$ becomes a finite order element of $\PSL_2(\R)$.
This finishes the proof.

\end{proof}

\subsection{Proof of Theorem \ref{theoproduct}}\label{ss.prodSL2}

Theorem \ref{theoproduct} follows from combining Proposition \ref{prop:robustfaithfulSL2}, Sullivan's Theorem \cite{sullivan} and the following general lemma.

\begin{lemma}\label{lem: products}
Suppose that \(\Gamma\) is a finitely generated non-abelian free group and %\(\sG = \SL_2(\R)^r \times \SL_2(\C)^s\) 
\(\sG = \prod_{i=1}^{r} \sG_i\) for some positive integer $r$ and some Lie groups $\sG_1,\dots,\sG_r$.  Then if \(\rho=(\rho_1,\dots,\rho_{r}):\Gamma \to \sG\) is robustly faithful, there is some $i=1,\dots,r$ such that $\rho_i$ is robustly faithful.
\end{lemma}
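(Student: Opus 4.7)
The plan is to argue by contraposition: assuming no $\rho_i$ is robustly faithful, we will exhibit arbitrarily small perturbations of $\rho$ which are non-faithful. For each $i\in\{1,\dots,r\}$, by hypothesis there exists a sequence $\rho_i^{(n)}\to\rho_i$ in $\mathrm{Hom}(\Gamma,\sG_i)$ with $\ker(\rho_i^{(n)})\neq\{1\}$ (if $\rho_i$ is itself already non-faithful, take a constant sequence). Setting $\rho^{(n)}:=(\rho_1^{(n)},\dots,\rho_r^{(n)})$, pointwise convergence on each factor yields $\rho^{(n)}\to\rho$ in $\mathrm{Hom}(\Gamma,\sG)$, so it suffices to prove that each $\rho^{(n)}$ is non-faithful. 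Since $\ker(\rho^{(n)})=\bigcap_{i=1}^{r}\ker(\rho_i^{(n)})$, the problem reduces to the following purely group-theoretic assertion: in a non-abelian free group any finite intersection of non-trivial normal subgroups is non-trivial.

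To establish this, it is enough by induction on $r$ to handle the case of two non-trivial normal subgroups $N_1,N_2\subset\Gamma$. Choose $g_i\in N_i\setminus\{1\}$. If $[g_1,g_2]\neq 1$, then since each $N_i$ is normal one checks directly that $[g_1,g_2]\in N_1\cap N_2$, and we are done. Otherwise $g_1,g_2$ commute, so they share a common maximal cyclic subgroup $\langle z\rangle\subset\Gamma$. As $\Gamma$ is non-abelian free, the centralizer of $z$ equals $\langle z\rangle$ and is a proper subgroup of $\Gamma$, so we can pick $h\in\Gamma\setminus\langle z\rangle$. Replacing $g_2$ by $hg_2h^{-1}\in N_2$ produces a pair which does not commute: this uses the classical fact that in a free group the normalizer of a maximal cyclic subgroup coincides with the subgroup itself (so conjugation by $h\notin\langle z\rangle$ cannot send $\langle z\rangle$ back into itself). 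Hence $[g_1,hg_2h^{-1}]$ is a non-trivial element of $N_1\cap N_2$.

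The main obstacle is really the group-theoretic step, and more specifically the edge case where the initial witnesses $g_1,g_2$ happen to commute; once one recalls that centralizers of non-trivial elements in free groups are maximal cyclic and self-normalizing, the argument becomes essentially routine, and could equally well be cited as a standard fact. Everything else in the proof is packaging: the choice of non-faithful approximating sequences and the assembly into a product representation are formal.
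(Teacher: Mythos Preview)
Your proof is correct and follows essentially the same approach as the paper: both reduce the question to showing that finitely many nontrivial normal subgroups of a non-abelian free group have nontrivial intersection, and both use the commutator $[g_1,g_2]$ as the candidate witness. The only minor difference is how the degenerate commuting case is handled: the paper observes that if $g_1,g_2$ commute then some common power $g_1^{n}=g_2^{m}$ lies in both kernels, whereas you conjugate $g_2$ out of the maximal cyclic $\langle z\rangle$ to force non-commutation; both fixes are equally elementary.
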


\begin{proof}
	By an inductive procedure, it suffices to show that either $\rho_{1}$ or $(\rho_2,\dots,\rho_{r})$ is robustly faithful. 
	Suppose by contradiction that both $\rho_{1}$ and $(\rho_2,\dots,\rho_{r})$ are not. 
	We may then find arbitrarily small perturbations $\rho_{1}'$ and $(\rho_2',\dots,\rho_{r}')$ of $\rho_{1}$ and $(\rho_2,\dots,\rho_{r})$ respectively, and elements $\gamma_1,\gamma_2\neq 1$ in $\Gamma$ so that $$\rho_1'(\gamma_1)=1 \text{ and } \rho_i'(\gamma_2)=1$$ \noindent for all $i=2,\dots,r$.
	
	The commutator $\gamma:=[\gamma_1,\gamma_2]$ belongs to the kernel of $\rho':=(\rho_1',\dots,\rho_{r}')$, which is an arbitrarily small perturbation of $\rho$. Hence $\gamma=1$ and there is some pair of integers such that $\gamma_1^{n}=\gamma_2^m$. It follows that $\gamma_2^{m}$ belongs to the kernel of $\rho'$, thus finding the desired contradiction.
	
\end{proof}

\begin{rem}\label{rem: surface groups in rank one products}
We observe here that Theorem \ref{theoproduct} remains valid when replacing $\Gamma$ by the fundamental group of a closed orientable surface. 
Indeed, Proposition \ref{prop:robustfaithfulSL2} remains true in that case thanks to Funar-Wolff \cite[Theorem 1.1]{wolff} (see also \cite{deblois}).
The corresponding statement for $\SL_2(\C)$ is also true thanks to Sullivan \cite{sullivan}.
This shows the claim, as Lemma \ref{lem: products} is completely general.
\end{rem}

%%%%%%%%%%%%%%%%%%%
\section{Perturbed not quasi-isometric representations}\label{sec: generalities QI}

In this section we prove a crucial result (Proposition \ref{prop: criterion for non robust qi}) which gives a sufficient condition ensuring that a representation of a free group into $\SL_3(\R)$ is not robustly quasi-isometric. 
The proof is given in Subsection \ref{ss.criterionnonQI}.
We begin by recalling general properties of quasi-isometric representations in Subsection \ref{subsec: propsQIreps}.

\subsection{General properties of quasi-isometric representations}\label{subsec: propsQIreps}

Let $\rho$ be a quasi-isometric representation of a finitely generated free group $\Gamma$ into $\GL_d(\R)$. 
It is not hard to see that every conjugate of $\rho$ is also quasi-isometric.
Moreover, $\rho$ is necessarily discrete and faithful, as $\Gamma$ is torsion free.

We record two other general properties for future use.

\begin{rem}\label{rem: finite index}
Suppose that $\Gamma_0$ is a finite index subgroup of $\Gamma$. 
Then $\Gamma_0$ is finitely generated and quasi-isometric to $\Gamma$. 
In particular, a representation $\rho:\Gamma\to\GL_d(\R)$ is quasi-isometric if and only if its restriction to $\Gamma_0$ is quasi-isometric.
\end{rem}

The most important property that we will need is the following.

\begin{rem}\label{rem: unipotents} 
Recall that an element $g\neq 1$ of \(\GL_d(\R)\) is said to be \emph{unipotent} if all its eigenvalues are equal to $1$. 
It is necessarily conjugate to an upper triangular matrix.
Moreover, for a given norm $\Vert\cdot\Vert$ on $\R^d$ the powers \(\Vert g^n\Vert\) grow in a polynomial way as $n\to\infty$.
In particular, the image of a quasi-isometric representation into $\GL_d(\R)$ does not contain unipotent elements. 
\end{rem}

\subsection{A criterion for non quasi-isometry}\label{ss.criterionnonQI}

For a matrix $g\in\SL_3(\R)$ we let $$\lambda_u(g)\geq\lambda_c(g)\geq\lambda_s(g) $$ \noindent be the moduli of the eigenvalues of $g$.
Recall that $g$ is said to be \textit{loxodromic} if the inequalities above are all strict.
In this case, the eigenvalues of $g$ are real, and denoted respectively bu $\mu_u(g),\mu_c(g)$ and $\mu_s(g)$. 
The corresponding eigenspaces are one dimensional and denoted by $\Eu(g),\Ec(g)$ and $\Es(g)$. 
We also let $$\Ecu(g):=\Eu(g)\oplus\Ec(g) \text{ and } \Ecs(g):=\Es(g)\oplus\Ec(g),$$ \noindent which are respectively the attracting and repelling hyperplane of $g$ acting on the Grassmannian $\Gr_2(\R^3)$.

We have the following general lemma.

\begin{lemma}\label{lem: power map is open}
Let $g_0\in\SL_3(\RR)$ be a loxodromic element and $n$ be a non-zero integer. 
Then the map $$\SL_3(\RR)\to \SL_3(\RR): g\mapsto g^n$$ \noindent sends every small enough neighborhood $U$ of $g_0$ to a neighborhood $U^{(n)}$ of $g_0^n$.
\end{lemma}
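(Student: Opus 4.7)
My plan is to deduce the lemma from the inverse function theorem by showing that the differential of the power map
\[
P_n: \SL_3(\R) \to \SL_3(\R), \quad P_n(g) := g^n,
\]
is invertible at $g_0$. If this is established, then $P_n$ is a local diffeomorphism near $g_0$, and in particular the image of every sufficiently small neighborhood $U$ of $g_0$ is an open neighborhood of $g_0^n = P_n(g_0)$, which is exactly the claim.

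To compute the differential, I would parametrize nearby points as $g = g_0 \exp(X)$ for $X$ in a small neighborhood of $0$ in $\mathfrak{sl}_3(\R)$. Expanding $(g_0\exp(tX))^n$ to first order in $t$ by the product rule gives
\[
dP_n|_{g_0}(g_0 X) \;=\; \sum_{k=1}^{n} g_0^{\,k}\,X\,g_0^{\,n-k} \;=\; \Bigl(\sum_{k=1}^{n} \mathrm{Ad}(g_0^{\,k})(X)\Bigr)g_0^{\,n},
\]
so invertibility of $dP_n|_{g_0}$ is equivalent to invertibility of the endomorphism
\[
L_n := \sum_{k=1}^n \mathrm{Ad}(g_0)^k
\]
of $\mathfrak{sl}_3(\R)$.

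The key step is then the eigenvalue analysis of $L_n$. The loxodromic hypothesis gives three distinct real eigenvalues of pairwise distinct absolute values, so $\mathrm{Ad}(g_0)$ is diagonalizable on $\mathfrak{sl}_3(\R)$, with eigenvalues $1$ of multiplicity two (on the Cartan subalgebra containing $g_0$) together with the six root values $\lambda_{ij} := \mu_i(g_0)/\mu_j(g_0)$ for $i\neq j$ (each of multiplicity one). The corresponding eigenvalues of $L_n$ are $n$ on the Cartan and $\sum_{k=1}^n \lambda_{ij}^k = \lambda_{ij}(\lambda_{ij}^n - 1)/(\lambda_{ij} - 1)$ on each root line. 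The first is nonzero because $n\neq 0$; the second is nonzero because each $\lambda_{ij}$ is a nonzero real number with $|\lambda_{ij}|\neq 1$ (thanks to the pairwise distinct absolute values of the $\mu_i(g_0)$), which forces both $\lambda_{ij}\neq 1$ and $\lambda_{ij}^n\neq 1$. Hence $L_n$ is invertible.

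I expect the main (and essentially only) conceptual point to be this eigenvalue analysis: the loxodromic hypothesis is used both to ensure $\mathrm{Ad}(g_0)$ is semisimple and to prevent the root eigenvalues from being roots of unity. The case $n<0$ reduces to the positive case by writing $g^n = (g^{-1})^{|n|}$ and noting that $g_0^{-1}$ is again loxodromic, with eigenvalues the reciprocals of those of $g_0$.
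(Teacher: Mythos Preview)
Your proof is correct but takes a different route from the paper. The paper argues more directly and geometrically: since loxodromic elements form an open subset of $\SL_3(\R)$, a neighborhood of $g_0$ is parametrized by the triple of eigenlines $(E^u,E^c,E^s)$ in general position together with the triple of nonzero real eigenvalues $(\mu_u,\mu_c,\mu_s)$ of pairwise distinct moduli and product $1$. In these coordinates the power map $g\mapsto g^n$ fixes the triple of eigenlines and sends each eigenvalue $\mu$ to $\mu^n$; since $\mu\mapsto\mu^n$ is open on $\R\setminus\{0\}$, the conclusion follows at once. Your approach via the inverse function theorem is slightly heavier machinery but gives a bit more (a local diffeomorphism rather than just an open map), and your eigenvalue computation for $L_n$ makes explicit exactly where the loxodromic hypothesis is used --- to guarantee that each ratio $\mu_i(g_0)/\mu_j(g_0)$ is a real number of modulus $\neq 1$ and hence not an $n$-th root of unity. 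The paper's argument is more elementary and visual; yours generalizes more transparently to other semisimple Lie groups.
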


\begin{proof}
Indeed, every power of a loxodromic element is again loxodromic. 
Moreover, a neighborhood of a loxodromic element is parametrized by the choice of three lines $\Eu,\Ec$ and $\Es$ in general position, plus the choice of three non zero real numbers $\mu_u,\mu_c$ and $\mu_s$ of different moduli, and whose product is equal to one.
As the eigenlines of a power of $g$ coincide with eigelines of $g$, and the map $$\R\setminus\{0\}\to \R\setminus\{0\}: \mu\mapsto \mu^n$$ \noindent is open, the proof is complete. 
\end{proof}

\begin{prop}\label{prop: criterion for non robust qi}
Let $\Gamma$ be a non-abelian free group and $\rho:\Gamma\to \SL_3(\R)$ be a representation.
Suppose that $\Gamma$ admits a free generating set containing two elements $a$ and $b$ with the following properties:
\begin{enumerate}
\item there exist non zero integers $m$ and $n$ so that the word $\omega:=a^mb^n$ satisfies that for some non zero integer $q$ and some $\mu\neq \pm 1$ the subspace $P_0:=\ker(\rho(\omega^q)-\mu)$ is two-dimensional.
Moreover, the subspace $L_0:=\ker(\rho(\omega^q)-\mu^{-2})$ is one-dimensional. \item The matrices $\rho(a)$ and $\rho(b)$ are loxodromic and $L_0$ is contained either in the attracting plane $\Ecu(\rho(a))$ of $\rho(a)$, or in the repelling one $\Ecs(\rho(a))$.
\end{enumerate}
Then there exists a continuous path $\rho_t:\Gamma\to\SL_3(\R)$ with $\rho_0=\rho$ and a sequence $t_k\to 0$ such that for every $k$ the element $$\rho_{t_k}([\omega^q,a^{p_k}\omega^qa^{-p_k}])$$ \noindent is unipotent for some non zero integer $p_k$. 
\end{prop}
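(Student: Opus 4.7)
The plan is to find arbitrarily small perturbations of $\rho$ producing the unipotent elements in the image that Remark \ref{rem: unipotents} then forbids for a quasi-isometric representation. The two main mechanisms are (a) an explicit computation of the commutator at $t=0$ exploiting the rank-one structure $g = \mu I + X$, and (b) a one-parameter deformation argument breaking the $t=0$ symmetry, combined with the dynamical drift of $\rho(a)^p L_0$ inside $\Ecu(\rho(a))$.

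\textbf{Computation at $t=0$.} Write $g := \rho(\omega^q) = \mu I + X$ with $X = (\mu^{-2}-\mu)\pi$, where $\pi$ is the projection onto $L_0$ along $P_0$; so $X$ has rank one and satisfies $X^2 = (\mu^{-2}-\mu) X$. Set $A := \rho(a)$, $\pi_p := A^p \pi A^{-p}$, $X_p := A^p X A^{-p}$, and $h_p := A^p g A^{-p} = \mu I + X_p$. Expanding $[g, h_p] = g h_p g^{-1} h_p^{-1}$ using $g^{-1} = \mu^{-1} I - \mu X$ and $X_p^2 = (\mu^{-2}-\mu) X_p$, all scalar-type cross-terms cancel and one obtains $[g, h_p] = I + E_p$ with $E_p$ of rank at most two and image in $L_0 + A^p L_0$. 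The rank-one nature of both $X$ and $X_p$ makes the expressions cyclically symmetric, so $\mathrm{tr}([g,h_p]) = \mathrm{tr}([g,h_p]^{-1})$; the characteristic polynomial of $[g, h_p]$ therefore always has $1$ as a root, and unipotence reduces to the single equation $\mathrm{tr}(E_p) = 0$. A direct computation then factorises this as
\[
\mathrm{tr}(E_p) = -(\mu^{-2}-\mu)^3(\mu^3-1)\, T_p (T_p - 1),
\qquad T_p := \mathrm{tr}(\pi\pi_p) = \alpha(A^p \ell_0)\,\alpha(A^{-p}\ell_0),
\]
where $\alpha$ is the functional vanishing on $P_0$ with $\alpha(\ell_0)=1$. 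Thus $[g,h_p]$ is unipotent iff $T_p \in \{0,1\}$, a purely dynamical condition expressing that the $A$-orbit of $L_0$ either returns to $L_0$, or hits $P_0$.

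\textbf{The deforming path.} Since $g$ is diagonalisable with the double eigenvalue $\mu$ and $\Gamma$ is free of rank at least two, we have enough moduli in $\mathrm{Hom}(\Gamma, \SL_3(\R))$ to choose a smooth path $\rho_t$ with $\rho_0 = \rho$ such that $g_t := \rho_t(\omega^q)$ splits the eigenvalue $\mu$ into two distinct eigenvalues for $t \ne 0$ (so $g_t$ is loxodromic) while $A_t := \rho_t(a)$ stays loxodromic. Using hypothesis (2), the direction of the path can be chosen so that the $\mu^{-2}$-type eigenline $L(t)$ of $g_t$ remains in $\Ecu(A_t)$ (respectively $\Ecs(A_t)$). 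By Lemma \ref{lem: power map is open} applied to the loxodromic elements $g_t$ and $A_t$, nearby values of the relevant conjugacy invariants can be realised inside the representation variety.

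\textbf{Producing the sequence $(t_k, p_k)$.} Let $F(t,p) := \mathrm{tr}(C_{t,p}) - 3$ and $G(t,p) := \mathrm{tr}(C_{t,p}^{-1}) - 3$, where $C_{t,p} := \rho_t([\omega^q, a^p \omega^q a^{-p}])$; unipotence is $F = G = 0$. The rank-one symmetry gives $F(0,p) \equiv G(0,p)$, so $F - G = O(t)$ uniformly on compact $p$-intervals. Because $L_0 \subset \Ecu(A)$, as $p \to +\infty$ (in the $\Ecu$ case; $p \to -\infty$ otherwise) the line $A^p L_0$ drifts towards $\Eu(A)$ in $\mathbb{RP}^2$, and a quantitative version of this drift, combined with the perturbation freedom in choosing the direction of $\rho_t$, lets one solve the equation $F(t,p) = 0$ continuously in $p$ for $p$ large, with the corresponding time $t = t(p)$ tending to $0$. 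The residual equation $G(t(p), p) = 0$ is then solved by choosing $p = p_k$ to be the integer closest to a zero of $G(t(p), p)$ on the curve; the error made in rounding to an integer is controlled using the smoothness of the trace maps. Taking $t_k := t(p_k)$ yields the required sequence.

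\textbf{Main obstacle.} The real work is the final matching step: the locus $\{F = G = 0\}$ is cut out by two equations, but at $t=0$ it collapses onto the single equation $T_p \in \{0,1\}$, which has no integer solution (other than $p = 0$) in the generic configuration. One must show that the deformation $\rho_t$ reopens this locus into a genuine curve which, thanks to the asymptotic drift of $A^p L_0$ toward $\Eu(A)$ (or $\Ec(A)$) guaranteed by hypothesis (2), passes arbitrarily close to integer lattice points in $(t,p)$-space as $p_k \to \infty$ and $t_k \to 0$. Controlling the interplay between the exponentially growing eigenvalue ratios of $A^p$ and the linearly small perturbation parameter $t$ is the principal analytic difficulty.
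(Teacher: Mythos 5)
Your proposal does not close the essential gap, and you say as much yourself in the ``Main obstacle'' paragraph. Characterizing unipotence of $C_{t,p}=\rho_t([\omega^q,a^p\omega^q a^{-p}])$ by the two trace equations $F=G=0$ leaves you with two equations and only one continuous parameter, since $p$ is forced to be an integer. The proposed fix --- solve $F(t,p)=0$ for $t=t(p)$ along a curve and then take $p_k$ ``the integer closest to a zero of $G(t(p),p)$'' --- does not work: rounding $p$ to an integer makes $G$ merely small, not zero, and unipotence is a closed, codimension-positive condition, so a matrix whose eigenvalues are close to $1$ is of no use (indeed the whole point of the proposition is that nearby non-unipotent behaviour is irrelevant). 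No amount of ``smoothness of the trace maps'' converts an approximate zero into an exact one. A secondary problem is your choice of deformation: splitting the double eigenvalue $\mu$ of $g_t=\rho_t(\omega^q)$ for $t\neq 0$ destroys precisely the structure (a plane on which a conjugate of $\omega^q$ acts as a scalar) that makes the commutator degenerate, so the deformation direction works against you.

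The paper's proof avoids the two-equation problem entirely by never touching traces. One first perturbs so that $\Ec(\rho(a))\neq L_0$ and $\Es(\rho(a))\not\subset P_0$, then deforms only the eigenline $\Eu(\rho_t(a))$ so that it crosses $\Ecu(\rho(a))$ transversely at $t=0$, compensating through $b$ so that $\rho_t(\omega)=\rho(\omega)$ is \emph{constant} along the path (hence $P_0$, $L_0$ and the scalar action on $P_0$ are preserved for all $t$). Since $\rho_t(a^p)\cdot P_0\to\Ecu(\rho_t(a))$ as $p\to\infty$ and the linear functional cutting out $\Ecu(\rho_t(a))$ changes sign on $L_0$ across $t=0$, the intermediate value theorem produces, for each large integer $p$, a small $t_0$ with $L_0\subset\rho_{t_0}(a^p)\cdot P_0$ --- a single scalar equation in the single variable $t$. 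At such $t_0$ both $\rho_{t_0}(\omega^q)$ and $\rho_{t_0}(a^p\omega^q a^{-p})$ preserve the hyperplane $\rho_{t_0}(a^p)\cdot P_0$, the latter acting on it as a scalar, so their commutator is the identity on that hyperplane; determinant $1$ then forces the third eigenvalue to be $1$, i.e.\ unipotence. This linear-algebraic endgame is the idea your computation of $\mathrm{tr}(E_p)$ is circling around but never reaches: the condition $T_p=0$ you isolate at $t=0$ is exactly ``$A^{-p}L_0\subset P_0$'', and the correct move is to realize that condition exactly for the deformed representation by an intermediate value argument, not to chase the trace identities after an eigenvalue-splitting deformation.
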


\begin{proof}
Without loss of generality we assume that $L_0$ belongs to $\Ecu(\rho(a))$.

Let us first observe that by taking an arbitrarily small perturbation of $\rho$ if needed we may assume, in addition to the current hypotheses, that $\Ec(\rho(a))$ is different from $L_0$ and $\Es(\rho(a))$ is not contained in $P_0$.
Indeed, there exist arbitrarily small neighborhoods $U$ of $\rho(b)$ so that $U^{(n)}$ is a neighborhood of $\rho(b^{n})$ (Lemma \ref{lem: power map is open}).
We may take an arbitrarily small perturbation $\rho'(a)$ of $\rho(a)$ so that $\Ec(\rho'(a))$ is different from $L_0$, $\Es(\rho(a))$ does not belong to $P_0$, $L_0$ belongs to $\Ecu(\rho'(a))$, and $$\rho'(a)^{-m}\rho(a^{m})\rho(b^{n}) \in U^{(n)}.$$
\noindent In particular, there is some $\rho'(b)\in U$ so that $$\rho'(a)^{-m}\rho(a^{m})\rho(b^{n})=\rho'(b)^{n}.$$
\noindent By keeping all other generators of $\Gamma$ fixed, this defines a perturbation $\rho'$ of $\rho$ for which $\rho'(\omega)=\rho(\omega)$, and $\rho'$ is still in the hypotheses of the proposition, with the extra claimed genericity assumptions on $\Ec(\rho'(a))$ and $\Es(\rho'(a))$.
To lighten notations, we assume the representation $\rho$ that we started with also satisfies these conditions.

Now fix any continuous path $t\mapsto\rho_t(a)$ passing through $\rho(a)$ at $t=0$, which is constructed only deforming eigenline $\Eu(\rho(a))$ in such a way that $t\mapsto \Eu(\rho_t(a))$ intersects transversely $\Ecu(\rho(a))$ precisely at $t=0$. 
As above, we may find a continuous path $t\mapsto \rho_t(b)$ passing through $\rho(b)$ at $t=0$ and such that $\rho_t(\omega)=\rho(\omega)$ for all $t$.

Fix a unit vector $v\in L_0$ and let $t\mapsto \theta_t\in(\RR^3)^*\setminus\{0\}$ be a continuous path so that $$\ker\theta_t=\Ecu(\rho_t(a))$$ \noindent for all $t$.
As $\Ec(\rho(a))=\Ec(\rho_t(a))\neq L_0$ and $L_0\in\Ecu(\rho(a))$, by our transversality assumption we may find arbitrarily small $t_-<0<t_+$ so that 
$$\theta_{t_-}(v)\cdot \theta_{t_+}(v)<0.$$ \noindent Without loss of generality we may assume \begin{equation}\label{eq: criterion for non robust qi}\theta_{t_-}(v)<0<\theta_{t_+}(v).\end{equation}

On the other hand, for every positive integer $p$ we may take a continuous path $t\mapsto\theta_{t,p}\in(\RR^3)^*\setminus\{0\}$ so that $$\ker\theta_{t,p}=\rho_t(a^p)\cdot P_0$$ \noindent holds for all $t$.
Note that, as $\Es(\rho(a))=\Es(\rho_t(a))\not\subset P_0$, for a given $t$ we have $$\rho_t(a^p)\cdot P_0\to\Ecu(\rho_t(a)),$$ \noindent as $p\to\infty$.
In particular, we may assume that $\theta_{t,p}$ is chosen in such a way that for every $t$ one has $$\theta_{t,p}\to\theta_t$$ \noindent as $p\to\infty$.
By Equation (\ref{eq: criterion for non robust qi}) we may find a large enough $p$ such that $$\theta_{t_-,p}(v)<0<\theta_{t_+,p}(v).$$
\noindent There exists then some $t_-<t_0<t_+$ such that $\theta_{t_0,p}(v)=0$ or, in other words, $$L_0\subset \rho_{t_0}(a^{p})\cdot P_0.$$
In particular, $\rho_{t_0}(\omega^q)=\rho(\omega^q)$ preserves the hyperplane $\rho_{t_0}(a^{p})\cdot P_0$, because it preserves $L_0$ and $P_0\cap\rho_{t_0}(a^{p})\cdot P_0$.

On the other hand, the element $\rho_{t_0}(a^p\omega^q a^{-p})$ also preserves $\rho_{t_0}(a^{p})\cdot P_0$ and furthermore acts as a scalar multiple of the identity on it.
Hence, the commutator $\rho_{t_0}([\omega^q,a^p\omega^q a^{-p}])$ acts as the identity on it. 
As $ \rho_{t_0}([\omega^q,a^p\omega^q a^{-p}]) $ has determinant equal to $1$, it is a unipotent element.
\end{proof}

%%%%%%%%%%%%%%%%
\section{Examples of quasi-isometric free groups in  \(\SL_3(\R)\)}\label{sec: examples}

We now discuss examples of quasi-isometric representations into $\SL_3(\R)$. 
An important class is given by Anosov representations, for which we refer the reader to the surveys \cite{kassel,weinhard}.
However, in this paper we are focusing on examples which are not Anosov, and not even limits of Anosov representations.
We also focus on free groups, but many of the previously known results that we will quote here apply for more general groups.

\subsection{Reducible examples}\label{ss.reducible} 

In this subsection we briefly review a special case of recent results by Lahn \cite{lahn}.
Let $\rho:\Gamma\to\SL_3(\R)$ be a reducible suspension.
Recall that by definition this means that $\rho$ preserves some hyperplane $P$. 
Up to conjugacy, we may write \begin{equation}\label{eq. reducible suspension}
\rho(\gamma)=\left( \begin{matrix}
e^{-\frac{1}{2}\varphi(\gamma)}\rho_P(\gamma) & \kappa(\gamma)\\
0 & \pm e^{\varphi(\gamma)}
\end{matrix} \right),\end{equation} \noindent for some morphisms $\rho_P:\Gamma\to\SL_2^\pm(\RR)$ and $\varphi:\Gamma\to\RR$, and some $\kappa:\Gamma\to\RR^2$.
We recall that $\rho$ is said to be \textit{derived from Barbot} if $\rho_P$ is quasi-isometric.

Lahn \cite[Proposition 3.4]{lahn} proves that if $\rho$ is derived from Barbot, then it is necessarily quasi-isometric (see also Lemma \ref{lem:DFBisQI} below).
Moreover, Lahn characterizes which reducible suspensions are Anosov:

\begin{teo}[Lahn {\cite[Theorem 2]{lahn}}]\label{teo: lahn in examples}
Let $\rho:\Gamma\to\SL_3(\R)$ be a reducible suspension.
Then $\rho$ is Anosov if and only if $$\displaystyle\inf_{\gamma: \varphi(\gamma)\neq 0}\frac{\log(\lambda_u(\rho_P(\gamma)))}{\vert\varphi(\gamma)\vert}>\frac{3}{2},$$
\noindent where $\lambda_u(\rho_P(\gamma))$ denotes the largest modulus among the eigenvalues of $\rho_P(\gamma)$.

\end{teo}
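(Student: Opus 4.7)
My plan is to diagonalize the block-upper-triangular form and read the Anosov condition directly off the three eigenvalue moduli of $\rho(\gamma)$. Writing $L(\gamma):=\log\lambda_u(\rho_P(\gamma))$ and $\varphi:=\varphi(\gamma)$, the logarithms of the three eigenvalue moduli of $\rho(\gamma)$ are
\[-\tfrac{\varphi}{2}+L,\quad -\tfrac{\varphi}{2}-L,\quad \varphi.\]
A short case analysis on $\mathrm{sgn}(\varphi)$ and on the ratio $L/|\varphi|$ orders them. The key observation is that when $L\geq\tfrac{3}{2}|\varphi|$, the $\lambda_u(\rho_P)$-eigenvector wins and the top eigendirection of $\rho(\gamma)$ lies in $P$, with $\alpha_1$-spectral gap equal to $L-\tfrac{3}{2}\varphi$ (if $\varphi\geq 0$) or $L+\tfrac{3}{2}|\varphi|$ (if $\varphi<0$); when instead $L<\tfrac{3}{2}|\varphi|$ and $\varphi>0$, the top eigendirection is the $e^{\varphi}$-direction, which is transverse to $P$, and the gap is $\tfrac{3}{2}\varphi-L$.

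For sufficiency, assume $\inf>\tfrac{3}{2}$. I would first argue that $\rho_P$ is Anosov in $\SL_2^{\pm}(\R)$: the cleanest route is to use the gap formulas above, combined with the fact that $\varphi:\Gamma\to\R$ is a homomorphism (so in particular $|\varphi(\gamma)|\leq C|\gamma|$), to first deduce that $\rho$ is quasi-isometric, and then invoke Proposition \ref{p.equiv} to identify $\rho$ as derived from Barbot. Armed with $L(\gamma)\geq c_0|\gamma|-d_0$, every $\gamma$ lies in the first regime and the $\alpha_1$-gap is bounded below by $\tfrac{\epsilon}{3/2+\epsilon}L\geq c|\gamma|-d$ (where $\inf\geq\tfrac{3}{2}+\epsilon$). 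A standard Benoist-type comparison on powers then converts this eigenvalue gap into the singular value gap $\log s_1(\rho(\gamma))-\log s_2(\rho(\gamma))\geq c'|\gamma|-d'$, giving $\{\alpha_1\}$-Anosov, hence Anosov.

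For necessity, assume $\rho$ is Anosov. There is a continuous $\rho$-equivariant boundary map $\xi^1:\partial\Gamma\to\RP^2$ sending $\gamma^+\mapsto\Eu(\rho(\gamma))$. By equivariance plus the minimality of $\Gamma\curvearrowright\partial\Gamma$, the image $\xi^1(\partial\Gamma)$ is either contained in $P$ or entirely transverse to $P$. Applied to any commutator $\gamma=[a,b]$ with $\rho_P([a,b])$ loxodromic---for which $\varphi([a,b])=0$ since $\varphi$ factors through the abelianization---the top eigenvector of $\rho([a,b])$ lies in $P$; hence $\xi^1(\partial\Gamma)\subset P$. This rules out the second regime, so every loxodromic $\gamma$ satisfies $L\geq\tfrac{3}{2}|\varphi|$. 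Combining the Anosov gap $L-\tfrac{3}{2}|\varphi|\geq c|\gamma|-d$ with $|\varphi(\gamma)|\leq C|\gamma|$ gives $L\geq(\tfrac{3}{2}+c/C)|\varphi|-d$, and passing to powers $\gamma^n$ (using $L(\gamma^n)=nL(\gamma)$ and $\varphi(\gamma^n)=n\varphi(\gamma)$) absorbs the additive constant $d$ and yields $\inf>\tfrac{3}{2}$.

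The main obstacle will be the necessity direction, specifically the step identifying $\xi^1(\partial\Gamma)\subset P$: this depends on producing enough loxodromic commutators whose top eigenvector lies in $P$, which in turn requires $\rho_P$ to be sufficiently rich---Anosov $\rho$ delivers this via Proposition \ref{p.equiv}. A secondary subtlety is that the sufficiency direction implicitly needs $\rho_P$ to be Anosov, which is not immediate from $\inf>\tfrac{3}{2}$ alone but should follow by coupling the spectral gap formulas with Proposition \ref{p.equiv} as sketched above.
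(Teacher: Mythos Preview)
The paper does not prove this theorem; it is cited from Lahn \cite[Theorem~2]{lahn}, so there is no in-paper argument to compare your proposal against.

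Your necessity direction is essentially correct. The set $\{x\in\partial\Gamma:\xi^1(x)\in P\}$ is closed and $\Gamma$-invariant, hence empty or all of $\partial\Gamma$ by minimality, and a loxodromic commutator (available since Anosov $\Rightarrow$ derived from Barbot via Proposition~\ref{p.equiv}) forces the latter. One refinement: apply the same reasoning to $\gamma^-$ to get $\Es(\rho(\gamma))\subset P$ as well, so the middle eigenvalue modulus is always $e^{\varphi}$; then for $\varphi>0$ the $\alpha_1$-gap is indeed $L-\tfrac{3}{2}\varphi$, and replacing $\gamma$ by $\gamma^{-1}$ handles $\varphi<0$. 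The power trick to kill the additive constant is fine.

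The circularity you flag in the sufficiency direction is real and cannot be closed as you sketch. The hypothesis $\inf_{\varphi\neq 0} L/|\varphi|>\tfrac{3}{2}$ gives no lower bound on $L(\gamma)$ in terms of $|\gamma|$: it says nothing when $\varphi(\gamma)=0$, and otherwise compares $L$ only to $|\varphi|$. So one cannot deduce that $\rho$ is quasi-isometric from it, and the appeal to Proposition~\ref{p.equiv} never gets off the ground. Indeed the statement as reproduced here is slightly loose for exactly this reason---take $\varphi\equiv 0$ with $\rho_P$ indiscrete: the infimum is vacuously $+\infty$ but $\rho$ is not Anosov. In Lahn's setting $\rho_P$ is taken to be convex cocompact (the paper's introduction says the result describes ``which derived from Barbot representations are Anosov'', and it is only ever invoked in \S\ref{ss.multiplicity} under that assumption). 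With $\rho_P$ convex cocompact granted, your sufficiency argument works: $L(\gamma)\geq c_0|\gamma|-d_0$ places every $\gamma$ in the first regime, the eigenvalue gap $L-\tfrac{3}{2}|\varphi|\geq\tfrac{\epsilon}{3/2+\epsilon}L$ grows linearly in $|\gamma|$, and the conversion to a singular-value gap is the standard spectral-versus-Cartan equivalence for Anosov representations \cite{BPS,KLP}.
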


Building on Theorem \ref{teo: lahn in examples} it is easy to construct examples of quasi-isometric reducible suspensions which are not accumulated by Anosov representations.

\subsection{Constructing free groups}\label{ss.ZD}  

We now turn to non reducible examples, proving Theorem \ref{teo: ejemplonoreducible}.
The construction involves several intermediate elementary steps.

\subsubsection{A modified ping-pong criterion} 

We start by giving a criterion which allows us to prove that a subgroup of $\SL_3(\R)$ is freely generated and quasi-isometrically embedded.
A similar variant of the ping-pong lemma may be found in Witte Morris \cite[Lemma 4.9.6]{witte-morris}.

We say a finite symmetric subset \(F \subset \SL_3(\R)\) satisfies condition \(\ast\) if there exists \(c > 1\), a one dimensional subspace \(L_0\), and for each \(g \in F\) a subset \(C_g \subset \RP^2\) such that 
\begin{enumerate}
 \item \(C_g \cap C_h = \emptyset\) if \(g \neq h\),
 \item \(L_0 \notin \bigcup\limits_{g \in F}C_g\),
 \item \(g\cdot L_0 \in C_g\) for all \(g \in F\),
 \item \(g\cdot C_h \subset C_g\) whenever \(g \neq h^{-1}\), and
 \item \(\|g\vert_L\| \ge c\) for all \(L \in \bigcup\limits_{h \neq g^{-1}}C_h\).
\end{enumerate}

\begin{lemma}[Ping-pong variant]\label{Lem: pingpong}
 If \(F \subset \SL_3(\R)\) is finite, symmetric, and satisfies condition \(\ast\) then \(F\) freely generates a free subgroup which is quasi-isometrically embedded in \(\SL_3(\R)\).
\end{lemma}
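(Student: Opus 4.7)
The plan is to use the ping-pong configuration both to establish freeness and to control the operator norm along reduced words: by Equation (\ref{eq: QI in SLd}), exponential growth of $\log\|w\|$ in word length is exactly what is needed for the quasi-isometric embedding.

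Let $w = g_1 g_2 \cdots g_n$ be a reduced word ($g_i \in F$ with $g_{i+1} \neq g_i^{-1}$), and fix a unit vector $v \in L_0$. The first step is to show, by descending induction on $j$, that the line spanned by $g_j g_{j+1} \cdots g_n \cdot v$ belongs to $C_{g_j}$. The base case $j=n$ is condition (3) applied to $L_0$. For the inductive step, the hypothesis places the line of $g_{j+1} \cdots g_n v$ inside $C_{g_{j+1}}$; reducedness gives $g_j \neq g_{j+1}^{-1}$, so condition (4) maps this line into $C_{g_j}$. Applying the induction at $j = 1$ and invoking condition (2) (namely $L_0 \notin C_{g_1}$), we conclude $w \cdot L_0 \neq L_0$ and in particular $w \neq 1$; this gives freeness.

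To obtain the quasi-isometric estimate I would read the same inductive chain norm-wise. For each $j < n$ the line of $g_{j+1} \cdots g_n v$ lies in $C_{g_{j+1}}$ with $g_j \neq g_{j+1}^{-1}$, hence it falls inside the domain of condition (5) for $g_j$, yielding
\[\|g_j \cdots g_n v\| \ge c \cdot \|g_{j+1} \cdots g_n v\|.\]
Iterating from $j = n-1$ down to $j = 1$ gives $\|w v\| \ge c^{n-1} \|g_n v\| \ge c^{n-1} \delta$, where $\delta := \min_{g \in F}\|g v\| > 0$ is strictly positive because $F$ is finite and elements of $\SL_3(\R)$ are invertible. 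Therefore $\log\|w\| \ge \log\|w v\| \ge (n-1)\log c + \log \delta$, which is precisely the type of lower bound appearing in Equation (\ref{eq: QI in SLd}). The matching upper bound $\log\|w\| \le n\log M$ with $M := \max_{g \in F}\|g\|$ is immediate from submultiplicativity, and together with the standard comparison between $\log\|g\|$ and the left-invariant Riemannian distance on $\SL_3(\R)$ (via the Cartan projection) this yields a quasi-isometric embedding.

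The main obstacle is of a purely bookkeeping nature: the expansion estimate from condition (5) only applies at the $n-1$ intermediate steps $j < n$, while the very first application of $g_n$ to $v \in L_0$ is not directly covered (since $L_0$ lies outside $\bigcup_h C_h$ by condition (2)). This single extra step is harmless, because it is bounded below by the uniform constant $\delta$, which is absorbed into the additive constant $b$ in the quasi-isometry inequality. A mild variant, which might streamline the write-up, is to pick the initial test vector $v$ in some line contained in $\bigcup_{h \neq g_n^{-1}} C_h$ rather than in $L_0$, so that condition (5) applies uniformly at every step; but since $g_n$ varies over the finite set $F$, the approach via $L_0$ is equally effective.
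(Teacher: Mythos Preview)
Your proof is correct and follows essentially the same approach as the paper: the same descending induction through the reduced word using conditions (3) and (4) to track lines into the $C_{g_j}$, the same use of condition (2) for freeness, and the same use of condition (5) at the $n-1$ intermediate steps to obtain the $c^{n-1}$ growth. If anything, you are slightly more careful than the paper in explicitly introducing the constant $\delta = \min_{g\in F}\|g v\|$ to control the initial step $g_n\cdot L_0$, whereas the paper simply writes $\|(g_1\cdots g_n)\vert_{L_0}\|\ge c^{n-1}$ without comment.
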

\begin{proof}
 Let \(n \ge 1\) and \(g_1,\ldots, g_n \in F\) be such that \(g_k \neq g_{k+1}^{-1}\) for \(k = 1,\ldots, n-1\).
 
We first prove that \(F\) freely generates a free group.  
To see this, observe that \(g_n \cdot L_0 \in C_{g_n}\), and by induction it follows that \((g_1\cdots g_n) \cdot L_0 \in C_{g_1}\).  
In particular this implies \((g_1\cdots g_n)\cdot L_0 \neq L_0\) so that \(g_1\cdots g_n \neq 1\) as required.

We now show that the group generated by \(F\) is quasi-isometrically embedded.   
For this purpose we observe that for \(k = 1,\ldots, n-1\), since \((g_{k+1}\cdots g_n)\cdot L_0 \in C_{g_{k+1}}\) and \(g_{k} \neq g_{k+1}^{-1}\) we have \(\|g_k\vert_{(g_{k+1}\cdots g_n)\cdot L_0}\| \ge c\).   
This implies
\[s_1(g_1\cdots g_n) = \max\limits_{L \in \RP^2}\|(g_1\cdots g_n)\vert_{ L}\| \ge \|(g_1\cdots g_n)\vert_{ L_0}\| \ge c^{n-1},\]
which concludes the proof since \(c > 1\). 
\end{proof}

\subsubsection{Examples verifying condition \(\ast\)}

We now produce a class of examples satisfying condition \(\ast\).
For this purpose we consider elements \(g \in \SL_3(\R)\) such that for some \(\mu > 1\) and \(m \in \N\) one has that \(P_0: = \ker(g^m - \mu)\) is two-dimensional and \(L_0:=\ker(g^m - \mu^{-2}) \) is one-dimensional.
For such an element and for a given finite \(g\)-invariant set \(X \subset \RP^2\) whose elements are contained in \(P_0\), we say that a set \(A \subset \RP^2\) is a \emph{prepared neighborhood of \(X\)} if \(X\) is contained in the interior of \(A\), \(g(A) = A\), and \(A = \bigcup\limits_{x \in A}x \oplus L_0\).
One has the following.

\begin{proposition}\label{beprepared}
Let $g\in\SL_3(\R)$ be as above and let \(\mathcal{A}=\mathcal{A}_X\) be the set of prepared neighborhoods of a given finite subset \(X\subset\RP^2\).  
Then \(A_1 \cap \cdots \cap A_n \in \mathcal{A}\) for all \(A_1,\ldots, A_n \in \mathcal{A}\) and \(\bigcap\limits_{A \in \mathcal{A}}A = \bigcup\limits_{x \in X}x \oplus L_0\).
\end{proposition}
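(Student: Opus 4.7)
The key observation driving the proof is that since $g^m$ acts as the scalar $\mu$ on $P_0$, the induced action of $g$ on the projective line $P_0 \subset \RP^2$ has order dividing $m$, while $g$ also preserves $L_0$. This finite-order behavior on $P_0$ is what makes $g$-invariant neighborhoods of $X$ in $P_0$ available.

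The first step is to encode prepared neighborhoods as subsets of $P_0$. Let $\pi_0 : \RP^2 \setminus \{L_0\} \to P_0$ be the radial projection sending a line $z$ to $(z \oplus L_0) \cap P_0$; from $g(L_0)=L_0$ and $g(P_0)=P_0$ one checks $\pi_0 \circ g = g \circ \pi_0$. A direct check then shows that every nonempty prepared neighborhood $A$ equals $\pi_0^{-1}(V) \cup \{L_0\}$ for $V := A \cap P_0$, and conversely that every $g$-invariant $V \subset P_0$ with $X \subset \mathrm{int}_{P_0}(V)$ arises this way. The equivalence $X \subset \mathrm{int}_{\RP^2}(A) \Leftrightarrow X \subset \mathrm{int}_{P_0}(V)$ uses $X \subset P_0$ together with continuity of $\pi_0$.

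The first assertion is then routine: for finite intersections, the interior condition, $g$-invariance, and the saturation property $A = \bigcup_{x \in A} x \oplus L_0$ all pass formally. For the identification of $\bigcap_{A \in \mathcal{A}} A$, the inclusion $\bigcup_{x \in X}(x \oplus L_0) \subset \bigcap_A A$ is immediate since every prepared $A$ contains $X$ and is saturated by the lines $x \oplus L_0$. For the reverse inclusion, given $y$ outside $\bigcup_{x \in X}(x \oplus L_0)$ (we may assume $X \neq \emptyset$, the empty case being trivial because $\emptyset \in \mathcal{A}$), one has $y \neq L_0$ and $\pi_0(y) \in P_0 \setminus X$. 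Choose any open neighborhood $W$ of $X$ in $P_0$ avoiding $\pi_0(y)$, and set $V := \bigcap_{k=0}^{m-1} g^k(W)$. Since $g^m$ acts trivially on $P_0 \subset \RP^2$, $V$ is $g$-invariant; being a finite intersection of open neighborhoods of $X = g^k(X)$, it is an open neighborhood of $X$; and $V \subset W$ excludes $\pi_0(y)$. The corresponding $A := \pi_0^{-1}(V) \cup \{L_0\}$ is the desired prepared neighborhood with $y \notin A$.

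The only subtle point is the dynamical one flagged at the outset: if $g$ were allowed to act on $P_0$ with, say, infinite-order hyperbolic dynamics, then no $g$-invariant open set could separate the two fixed points, and the averaging $V := \bigcap_k g^k(W)$ would fail to simultaneously contain $X$ and avoid $\pi_0(y)$. The hypothesis that $P_0$ is a single $\mu$-eigenspace of $g^m$ is precisely what rules this out.
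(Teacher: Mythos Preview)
Your proof is correct. The paper states this proposition without proof, presumably regarding it as elementary, so there is no argument to compare against; your approach---reducing to the projective line $P_0$ via the $g$-equivariant radial projection $\pi_0$ and using that $g$ acts on $P_0$ with finite projective order dividing $m$ to average an arbitrary separating neighborhood into a $g$-invariant one---is exactly the natural way to verify it.
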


\begin{lemma}\label{fabricaqi}
 Let \(f,g \in \SL_3(\R)\) be such that:
 \begin{enumerate}
 \item there exist one-dimensional eigenspaces \(E^u, E^c, E^s \in \RP^2\) of \(f\) with respective eigenvalues of moduli \(\lambda_u > \lambda_c > \lambda_s\).
 \item There exist a positive integer \(m\) and \(\mu > 1\) such that \(P_0: = \ker(g^m-\mu)\) is two-dimensional and \(L_0: = \ker(g^m - \mu^{-2}) \) is one-dimensional.
  \item The above subspaces satisfy \(E^u,E^c, E^s \not\subset P_0\), \(L_0 \not\subset E^{cu} := E^u \oplus E^c\), and \(L_0 \not\subset E^{cs} := E^s \oplus E^c\).
  \item For all \(0 \le k < m\) one has \(g^k((E^u \oplus L_0) \cap P_0) \not\subset E^{cs} \cup E^{cu}\), and \(g^k((E^s \oplus L_0) \cap P_0) \not\subset E^{cs} \cup E^{cu}\).
 \end{enumerate}
 Then there exists a positive integer \(n_0\) such that \(F_n := \lbrace f^n,f^{-n},g^n,g^{-n}\rbrace\) satisfies condition \(\ast\) for all \(n > n_0\).
\end{lemma}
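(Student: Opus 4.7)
The plan is to verify condition $\ast$ of Lemma~\ref{Lem: pingpong} for $F_n$ with $n\ge n_0$ large; throughout I write $\ell_0^\ast$ for the base point of that condition, to distinguish it from the subspace $L_0$ in the hypothesis. First I record the projective dynamics on $\RP^2$: the loxodromic iterates $f^n$ attract $\RP^2\setminus\RP(\Ecs)$ to $\Eu$ and expand at rate $\lambda_u^n$ (dually for $f^{-n}$). From $g^m|_{P_0}=\mu\cdot\mathrm{id}$ and $g^m|_{L_0}=\mu^{-2}$ with $\mu>1$ it follows that $g$ preserves both $\RP(P_0)$ and $L_0$, acts on $\RP(P_0)$ with projective order dividing $m$, and $g^m$ contracts transversally to $\RP(P_0)$ by the exact factor $\mu^{-3}$. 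Hence $g^n$ attracts $\RP^2\setminus\{L_0\}$ onto $\RP(P_0)$ and $g^{-n}$ attracts $\RP^2\setminus\RP(P_0)$ onto $\{L_0\}$.

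\textit{Construction of the data.} Let $\pi\colon\RP^2\setminus\{L_0\}\to\RP(P_0)$ be the projection from $L_0$, and pick a generic $\ell_0^\ast\in\RP^2$ so that $\ell_0^\ast$, together with $\pi(\ell_0^\ast)$ and their $g$-orbits, avoids a finite list of subvarieties including $\RP(\Ecs)\cup\RP(\Ecu)\cup\RP(P_0)\cup\{L_0\}$. Define
\[ X := \bigcup_{k=0}^{m-1} g^k\bigl(\{\pi(\Eu),\pi(\Es),\pi(\ell_0^\ast)\}\bigr) \subset \RP(P_0), \]
a finite $g$-invariant set. By hypothesis~(4) applied to $\pi(\Eu),\pi(\Es)$ and by genericity of $\ell_0^\ast$, $X$ is disjoint from $\RP(\Ecs)\cup\RP(\Ecu)$. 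Given a small $\eps>0$, take $C_{f^n}$, $C_{f^{-n}}$, $C_{g^{-n}}$ to be open $\eps$-balls around $\Eu$, $\Es$, $L_0$ respectively, and $C_{g^n}:=U:=\bigcup_{j=0}^{m-1}g^j U'$, where $U'$ is an $\eps$-transverse slab over a small $g$-invariant neighborhood of $X$ in $\RP(P_0)$; the inclusion $gU\subset U$ follows from the transverse contraction of $g^m$ by $\mu^{-3}$. Choose $\eps$ small enough that the four sets are pairwise disjoint, avoid $\ell_0^\ast$, and $U$ is disjoint from $\RP(\Ecs)\cup\RP(\Ecu)\cup\{L_0\}$ (this is possible by what precedes together with hypothesis~(3)).

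\textit{Verification.} Conditions~(i) and~(ii) are immediate. Condition~(iii) follows from the limits $f^{\pm n}\ell_0^\ast\to\Eu,\Es$ respectively, $g^n\ell_0^\ast\to \pi(\ell_0^\ast)\in X\subset U$, and $g^{-n}\ell_0^\ast\to L_0$ for $n\ge n_0$ large. Condition~(iv) is verified pairwise: hypothesis~(3) guarantees that each $C_{h_2}$ (for $h_1\ne h_2^{-1}$) lies in the basin of the attractor of $h_1$ and at positive distance from its repeller, so $h_1 C_{h_2}\subset C_{h_1}$ for $n$ large. For~(v), I bound $\|h|_L\|$ from below uniformly over the relevant union. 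For $h=f^n$ (resp.\ $f^{-n}$), positive angular distance of each of the four sets from $\RP(\Ecs)$ (resp.\ $\RP(\Ecu)$) gives $\|f^n|_L\|\ge C^{-1}\lambda_u^n$ (resp.\ $\|f^{-n}|_L\|\ge C^{-1}\lambda_s^{-n}$); for $h=g^n$ (resp.\ $g^{-n}$), positive distance of the three non-inverse sets from $\{L_0\}$ (resp.\ $\RP(P_0)$) gives $\|g^n|_L\|\ge C^{-1}\mu^{n/m}$ (resp.\ $\|g^{-n}|_L\|\ge C^{-1}\mu^{2n/m}$). Taking $n_0$ so large that all four lower bounds exceed some prescribed $c>1$ finishes the verification of $\ast$, and Lemma~\ref{Lem: pingpong} gives the conclusion.

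\textit{Main obstacle.} The critical step is~(v) for $h=f^{\pm n}$ applied to $L\subset U$: such lines sit in a thin tube around $\RP(P_0)$ and could a priori be close to the repelling hyperplanes $\RP(\Ecs)$ or $\RP(\Ecu)$ of $f^{\pm n}$, in which case no uniform expansion is available. Hypothesis~(4) is tailored precisely to rule this out, by forcing the orbit $X$—and hence, after shrinking $\eps$, the whole tube $U$—to remain at positive angular distance from $\RP(\Ecs)\cup\RP(\Ecu)$.
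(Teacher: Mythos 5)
Your proof is correct and follows essentially the same strategy as the paper: you take $X$ to be the $g$-orbit of the projections of $E^u$ and $E^s$ from $L_0$ into $P_0$, build a small $g$-invariant transverse tube around it (the paper packages this as a ``prepared neighborhood''), use hypotheses (3) and (4) to keep all four sets away from $E^{cs}\cup E^{cu}$ and $L_0$, and verify condition $\ast$ via the attractor--repeller dynamics of $f^{\pm n}$ and $g^{\pm n}$. The only cosmetic difference is that you absorb the base point's projection into $X$, whereas the paper selects the base point on $A\cap\partial C_{g^{-1}}$ at the end.
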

\begin{proof}
Let \(\angle(V,W)\) denote the minimum angle between two subspaces \(V,W \subset \R^3\) when either \(V\) or \(W\) is one dimensional.  
For $r>0$ and a subset \(X \subset \RP^2\) we denote by \(B(X,r)\) the set of \(L \in \RP^2\) with \(\angle(L',L) < r\) for some \(L' \in X\).

Define
\[X := \bigcup\limits_{k = 0}^{m-1} g^k\left((E^u \oplus L_0) \cap P_0\right) \cup g^k\left((E^s \oplus L_0) \cap P_0\right)\subset\RP^2,\]
\noindent which by hypothesis is a finite set of lines contained in \(P_0\) that is disjoint from \(E^{cs} \cup E^{cu}\).

We consider the family $\mathcal{A}=\mathcal{A}_X$ of prepared neighborhoods of $X$ (with respect to $g$).
By Proposition \ref{beprepared} we may choose \(A\in\mathcal{A}\) such that for some \(\eps_0 > 0\) the set \(A \cap P_0\) is disjoint from \(B(E^{cs}\cup E^{cu},\eps_0)\).

For \(0 < \eps < \eps_0\) which will be specified later we define
\[C_g := A \cap B(P_0,\eps), \text{   } C_{g^{-1}} := A \cap B(L_0,\eps)\] \noindent and
\[C_f := B(E^{u},\eps), \text{   } C_{f^{-1}} := B(E^{s},\eps).\]
\noindent By Hypothesis (3), if \(\eps > 0\) is small enough then the four sets above are pairwise disjoint.
Moreover as \(A\) is \(g\)-invariant, by Hypothesis (2) we have $$g^{n}\cdot(A \setminus C_{g^{-1}}) \subset C_g \text{ and } g^{-n}\cdot(A \setminus C_{g}) \subset C_{g^{-1}}$$ \noindent for all \(n\) large enough. 

Since \(A\) is a prepared neighborhood it contains a neighborhood of each point in \(E^s \oplus L_0\setminus\{L_0\}\), and similarly for \(E^{u} \oplus L_0\).  Therefore, by picking \(\eps > 0\) smaller if necessary, we may assume
\[C_f \cup C_{f^{-1}} \subset A.\]

Let \(F: = \lbrace f,f^{-1},g,g^{-1}\rbrace\).
By Hypothesis (2), we obtain \(g^{n}\cdot(C_h) \subset C_g\) and \[\inf\limits_{L \in C_h}\|g^n\vert_{L}\| > 2\] \noindent for all \(h \neq g^{-1}\) and \(n\) large enough.
Similarly, \(g^{-n}\cdot(C_h) \subset C_{g^{-1}}\) and \[\inf\limits_{L \in C_h}\|g^n\vert_{L}\| > 2\] for all \(h \neq g\) and \(n\) large enough.

On the other hand, since \(B(E^{cs},\eps_0) \cap (A \cap P_0) = \emptyset\), we may choose \(\eps > 0\) smaller if needed so that \(C_g \cap B(E^{cs},\eps) = \emptyset\).   By Hypothesis (3), we may also assume \[C_f\cap B(E^{cs},\eps) = \emptyset \text{ and }  C_{g^{-1}} \cap B(E^{cs},\eps) = \emptyset.\] \noindent Hence, for all \(n\) large enough and \(h \in\{ g,g^{-1},f\}\) we have \(f^n\cdot (C_h) \subset C_f\) and \[\inf\limits_{L \in C_h}\|f^n\vert_{L}\| > 2.\]
\noindent Similarly, one has \(f^{-n}\cdot (C_h) \subset C_{f^{-1}}\) and \[\inf\limits_{L \in C_h}\|f^{-n}\vert_{L}\| > 2\] \noindent for \(h \in\{ f^{-1},g,g^{-1}\}\) and all \(n\) large enough.

To conclude property \(\ast\) is suffices to choose \(L_0 \in A \cap \partial C_{g^{-1}}\) and set \(c := 2\).
\end{proof}

\subsection{The basic example in two generators}\label{ss.minimal}
 
From the general machinery of the previous section we can produce a basic example of a Zariski dense quasi-isometric representation of a free group in two generators into $\SL_3(\R)$, which is not a limit of Anosov representations.
We also show that arbitrary small perturbations of this example act minimally on the space of full flags $\mathcal{F}$ of $\R^3$.
 
Indeed, let \[g = \begin{pmatrix}2 & -2 & 0\\ 2 & 2 & 0\\ 0 & 0 & \frac{1}{8}\end{pmatrix}\] \noindent and \(f\) be loxodromic chosen so that the hypotheses of Lemma \ref{fabricaqi} are satisfied.
By Lemmas \ref{Lem: pingpong} and \ref{fabricaqi} we may take an odd positive integer \(n\) such that \(f^n, g^n\) freely generate a quasi-isometrically embedded free group \(\Gamma\).
 
Let \(\rho:\Gamma \to \SL_3(\R)\) be the inclusion.  
We first claim that \(\rho\) is not accumulated by Anosov representations.
To see this observe that, since \(n\) is odd, the only real eigenvalue of \(g^n\) is \(\frac{1}{8^n}\).  In particular if \(\rho':\Gamma \to \SL_3(\R)\) is sufficiently close to the inclusion then \(h = \rho'(g^n)\) also has a unique real eigenvalue and therefore \(h^k\) has two complex eigenvalues with equal modulus for all \(k\). This implies that \(\rho'\) is not Anosov.

We will now prove that \(f\) can be chosen so that \(\rho(\Gamma)\) is Zariski dense. 
For this, we apply a well-known criterion asserting that a subgroup of $\SL_d(\RR)$ is Zariski dense if and only if its adjoint action on the Lie algebra of $\SL_d(\R)$ leaves no subspace invariant. 
This is quite direct in our case, as the adjoint action of a loxodromic element $f\in\SL_d(\R)$ is diagonalizable thus, an invariant subspace of the adjoint action must be a direct sum of eigenspaces of the adjoint matrix $\text{Ad}_f$. 
Therefore, in our example we may consider $f$ and $g$ so that $\text{Ad}_f$ and $\text{Ad}_{gfg^{-1}}$ are diagonalizable in bases which are in general position, to conclude that the adjoint actions of $f$ and $gfg^{-1}$ do not share any invariant subspace. 
%\todo{L: chequear que se puede. quiza es cierto un enunciado del estilo: fijada una matriz $g$ y un abierto $U$ de matrices loxodromicas, existe $f\in U$ que hace un Z-denso con $g$.}
This shows that the inclusion $\rho:\Gamma\to\SL_3(\R)$ has Zariski dense image.

Finally, we show that \(\rho\) may be perturbed to act minimally on the space of full flags \(\F\). 
Indeed, let $P_0:=\text{span}\{e_1,e_2\}$ and $L_0:=\text{span}\{e_3\}$, which are the invariant subspaces of $g$.
Define the representation $\rho'$ by letting \(\rho'(f): = \rho(f)\) and \(\rho'(g)\) preserve \(L_0\) and \(P_0\), but in such a way that \(\rho'(g^n)\vert_{ P_0}\) is not a scalar multiple of the identity for any integer \(n \neq 0\).
By Dey-Hurtado \cite[Proposition 5.4]{DeyHurfulllimit}, $\rho'$ induces a minimal action on $\mathcal{F}$.

\subsection{Proof of Theorem \ref{teo: ejemplonoreducible}}\label{subsec: ex more generators}

We now build on the previous example to prove Theorem \ref{teo: ejemplonoreducible}.
We will need the following abstract lemma.

\begin{lem}\label{lem: free generators of subgroups}
Let $\Gamma$ be the non-abelian free group in the generators $a$ and $b$. 
Then for every $k>2$ there exists a finite index subgroup $\Gamma_k\subset\Gamma$ and a free generating set $F_k=\{c_1,\dots,c_k\}$ of $\Gamma_k$ such that $$c_1=a, \text{ } c_2=b^2, \text{ and } c_3=ba^pb^{-1},$$ for some positive integer $p$.
\end{lem}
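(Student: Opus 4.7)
The plan is to realize $\Gamma_k$ as the stabilizer of a point in a carefully chosen transitive action of $\Gamma$ on a finite set, and then to read off an explicit free generating set via the Reidemeister--Schreier procedure, with the permutations designed so that $a$, $b^2$, and a word of the form $ba^p b^{-1}$ all appear among the generators produced.

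Setting $n := k-1 \geq 2$, I would consider the action $\phi : \Gamma \to S_n$ in which $\phi(a)$ fixes $0$ and acts as the cyclic permutation $(1\,2\,\cdots\,n-1)$ on $\{1,\dots,n-1\}$, while $\phi(b)$ is the transposition $(0\,1)$ and fixes $2,\dots,n-1$. The action is transitive (from $0$ one reaches $1$ via $b$, and then any $i \geq 2$ via subsequent powers of $a$), so orbit--stabilizer shows that the stabilizer $\Gamma_k$ of $0$ has index $n = k-1$. The Nielsen--Schreier formula then gives that $\Gamma_k$ is free of rank $n(2-1)+1 = k$.

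I would then take the Schreier transversal $T = \{t_0, \dots, t_{n-1}\}$ with $t_0 := 1$, $t_1 := b$, and $t_i := ba^{i-1}$ for $2 \leq i \leq n-1$, and verify directly that $0 \cdot t_i = i$ and that every prefix of each $t_i$ belongs to $T$. The Reidemeister--Schreier procedure then produces free generators of the form $t_i \cdot x \cdot t_{i \cdot x}^{-1}$ for $(t_i, x) \in T \times \{a,b\}$, with $n-1$ of them collapsing to the identity. A routine case-by-case inspection identifies the three distinguished generators: the pair $(t_0, a)$ contributes $c_1 = a$ because $a$ fixes $0$; the pair $(t_1, b)$ contributes $c_2 = b^2$ because $1 \cdot b = 0$; and the pair $(t_{n-1}, a)$ contributes $c_3 = ba^{n-1} b^{-1}$ because $(n-1) \cdot a = 1$, giving $p := n-1 = k-2 \geq 1$. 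The remaining $k-3$ non-trivial generators, all of the shape $ba^{i-1} b a^{-(i-1)} b^{-1}$ for $2 \leq i \leq n-1$, complete the free generating set and are not constrained by the statement.

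The only real obstacle is keeping conventions straight (right vs.\ left action, and which pairs collapse to the identity in the Schreier procedure) and confirming that the chosen cycle structure produces $ba^{n-1} b^{-1}$ rather than a word of some other shape. The design is engineered precisely for this: by choosing $\phi(a)$ as a single cycle on $\{1,\dots,n-1\}$, the $a$-orbit through $1$ closes back on $1$, so that the non-tree $a$-edge at the end of this orbit is framed by the coset representative $t_1 = b$ on both sides, giving a generator of the required form $ba^p b^{-1}$.
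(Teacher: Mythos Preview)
Your proof is correct and amounts to the same construction as the paper's: the paper builds a $(k-1)$-sheeted cover of the wedge of two circles and reads off generators from a spanning tree, which is exactly the covering-space translation of your permutation action and Schreier transversal (indeed your $p=k-2$ matches the paper's $p=1$ for $k=3$ and $p=2$ for $k=4$, and your extra generators $ba^{i-1}ba^{-(i-1)}b^{-1}$ coincide with the loops the paper lists). The only difference is packaging---topological picture versus explicit Reidemeister--Schreier computation---so there is nothing to add.
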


\begin{proof}
We identify $\Gamma$ with the fundamental group $\pi_1(X,x_0)$ of a wedge sum $X$ of two oriented circles, glued about the base-point $x_0$, and labelled with letters $a$ and $b$ respectively. 
One may now construct a $(k-1)$-sheeted regular cover $\Pi:\widetilde{X}\to  X$ together with a lifted base-point $\widetilde{x}_0\in \Pi^{-1}(x_0)$ in such a way that $\pi_1(\widetilde{X},\widetilde{x}_0)$ is freely generated by $c_1,\dots, c_k$ and for some $p\geq 1$ one has $$\Pi_*(c_1)=a, \text{ } \Pi_*(c_2)=b^2, \text{ and } \Pi_*(c_3)=ba^pb^{-1},$$ \noindent see Figure \ref{fig: cover}.
This finishes the proof. 
\end{proof}

\begin{figure}[htbp] % htbp stand for "here", "top", "bottom", "page"
\scalebox{1.5}{%
\begin{overpic}[scale=0.5, width=0.65\textwidth, tics=2]{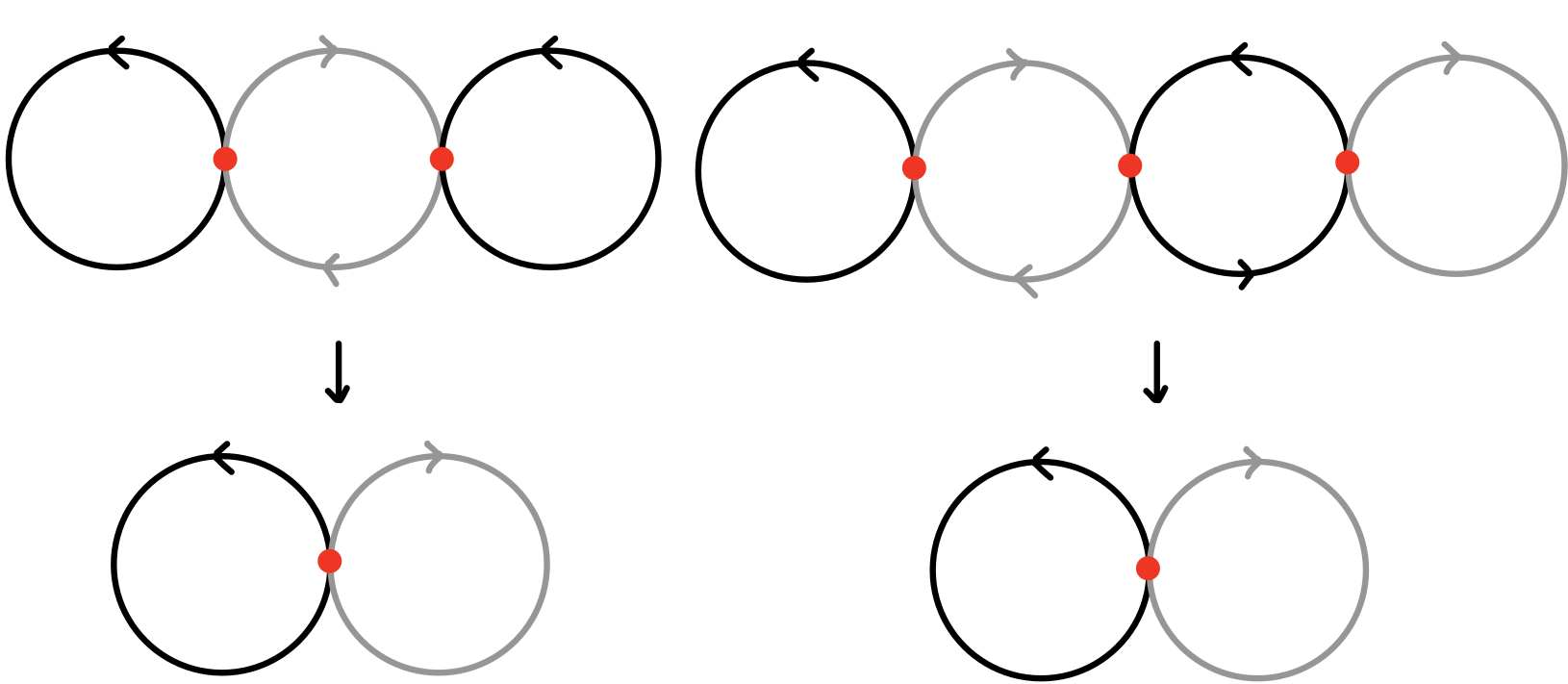}

	\put (21,7)  { \footnotesize\textcolor{red}{$x_0$}}	
	\put (11,11)  { \footnotesize$a$}
	\put (27,11)  { \footnotesize\textcolor{gray}{$b$}}

	\put (22,19)  { \footnotesize$\Pi$}	
	\put (1,7)  { \footnotesize$X$}

	\put (14,33)  { \footnotesize\textcolor{red}{$\widetilde{x}_0$}}	
	\put (5,37)  { \footnotesize$a_1$}
	\put (20,37)  { \footnotesize\textcolor{gray}{$b_1$}}
	\put (20,28)  { \footnotesize\textcolor{gray}{$b_2$}}	
	\put (32,37)  { \footnotesize$a_2$}
	\put (-5,33)  { \footnotesize$\widetilde{X}$}

	\put (73,7)  { \footnotesize\textcolor{red}{$x_0$}}	
	\put (64,10)  { \footnotesize$a$}
	\put (79,10)  { \footnotesize\textcolor{gray}{$b$}}
	
	\put (74,19)  { \footnotesize$\Pi$}	
	\put (87,7)  { \footnotesize$X$}

	\put (58,32)  { \footnotesize\textcolor{red}{$\widetilde{x}_0$}}	
	\put (49,36)  { \footnotesize$a_1$}
	\put (64,37)  { \footnotesize\textcolor{gray}{$b_1$}}
	\put (64,27)  { \footnotesize\textcolor{gray}{$b_2$}}	
		\put (76,37)  { \footnotesize$a_2$}
		\put (76,28)  { \footnotesize$a_3$}
		\put (92,37)  { \footnotesize\textcolor{gray}{$b_3$}}
	\put (99,33)  { \footnotesize$\widetilde{X}$}

   \end{overpic}}	
   \caption{Proof of Lemma \ref{lem: free generators of subgroups}. On the left, the case $k=3$. The loop $a$ has two lifts $a_1$ and $a_2$ which are loops, while $b$ lifts to two non-closed paths $b_1$ and $b_2$. The fundamental group $\pi_1(\widetilde{X},\widetilde{x}_0)$ is freely generated by the homotopy classes of the concatenated paths $a_1$, $b_1\star b_2$, and $b_1\star a_2\star \overline{b}_1$ (where $\overline{\beta}$ denotes the path $\beta$ travelled with the opposite orientation). In this case, one may take $p=1$. On the right, the case $k=4$. Each generator in $\{a,b\}$ has two open lifts, and a closed one. A free generator of $\pi_1(\widetilde{X},\widetilde{x}_0)$ is induced by $a_1$, $b_1\star b_2$, $b_1\star a_3\star a_2 \star \overline{b}_1$, and $b_1\star a_3\star b_3 \star \overline{a}_3\star \overline{b}_1 $, so $p=2$ in this case.}
   \label{fig: cover}

\end{figure}

We are now ready to construct our examples. 
Consider the free group $\langle f,g\rangle
$ where $f$ and $g$ are as in Subsection \ref{ss.minimal}, and let $\rho:\langle f,g\rangle\to\SL_3(\R)$ be the inclusion representation.
Fix $k>2$ and $\Gamma_k$ and $F_k$ as in Lemma \ref{lem: free generators of subgroups}.
The identification $\Gamma\cong\langle f,g\rangle$ given by $$a\mapsto g, \text{ }b\mapsto f$$
\noindent induces a quasi-isometric representation $\Gamma\to\SL_3(\R)$ whose restriction to $\Gamma_k$ we denote by $\rho_k$.
By Remark \ref{rem: finite index}, $\rho_k$ is quasi-isometric.
Moreover, as $g\in \rho_k(\Gamma_k)$, the argument of Subsection \ref{ss.minimal} still applies to show that $\rho_k$ is not a limit of Anosov representations. 
Observe moreover that $\rho_k$ can be chosen to be Zariski dense.

To finish, we show that arbitrarily small perturbations of $\rho_k$ contain unipotent elements, and therefore $\rho_k$ is not robustly quasi-isometric (recall Remark \ref{rem: unipotents}).
These perturbations are constructed in two steps. 

First, we consider an arbitrary small perturbation $\rho_k'$ of $\rho_k$ similar to what we did in Subsection \ref{ss.minimal}.
Namely, we let $\rho'_k(c_1)$ to act minimally on the projective line associated to $P_0$ and take $\rho'_k(c_i):=\rho_k(c_i)$ for all $i=2,\dots,k$.
In particular, the induced $\rho'_k$-action of $\langle c_1,c_2\rangle\subset\Gamma_k$ on the Grassmannian $\Gr_2(\R^3)$ of two-dimensional subspaces of $\R^3$ is minimal.

On the other hand, observe that there is some positive integer $q$ such that $\rho'_k(c_3^q)$ preserves a hyperplane $\widehat{P}$ on which acts as a scalar multiple of the identity. 
We may consider a sequence $\{\gamma_n\}\subset \langle c_1,c_2\rangle$ such that $$\rho_k'(\gamma_n)\cdot P_0\to\widehat{P}$$ \noindent as $n\to\infty$.
In particular, we may find an arbitrary small perturbation $\rho_k''$ of $\rho_k'$ with $\rho''_k(c_i)=\rho'_k(c_i)$ for all $i\neq 3$ and such that, for an appropriate large enough $n_0$, the element $\rho''_k(c_3^q)$ preserves 
$$\rho_k''(\gamma_{n_0})\cdot P_0=\rho_k'(\gamma_{n_0})\cdot P_0$$ \noindent and acts as a scalar multiple of the identity on it. 
The commutator $$[c_3^q,\gamma_{n_0}c_1\gamma_{n_0}^{-1}]$$
\noindent has then unipotent image under $\rho_k''$.

\section{Derived from Barbot representations}\label{sec. derived from barbot}

In this section we focus on reducible suspensions $\rho:\Gamma\to\SL_3(\R)$, where $\Gamma$ is a non abelian free group. 
We will use throughout the notations from Equation (\ref{eq. reducible suspension}).
Our main goal is to prove Theorem \ref{t.main}, for which we first need better understanding of which reducible suspensions are quasi-isometric (Proposition \ref{p.equiv}).

\subsection{Proof of Proposition \ref{p.equiv}}\label{ss.proofpropequiv}

The proof of Proposition \ref{p.equiv} is split into Lemma \ref{lem:DFBisQI} and Corollaries \ref{cor:redsuspensionQIisDFB} and \ref{cor:DFBandrobQI} below.

\begin{lemma}\label{lem:DFBisQI}
Let $\Gamma$ be a finitely generated non-abelian free group and \(\rho:\Gamma \to \SL_3(\R)\) be a reducible suspension.
If \(\rho\) is derived from Barbot, then it is quasi-isometric.
\end{lemma}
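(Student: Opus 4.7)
The plan is to verify the quasi-isometry estimate in the form of Equation (\ref{eq: QI in SLd}): it suffices to exhibit constants $a\ge 1$ and $b\ge 0$ with $\log s_1(\rho(\gamma))\ge a^{-1}|\gamma|-b$ for every $\gamma\in\Gamma$, since the matching upper bound is automatic from sub-multiplicativity of the operator norm together with finite generation of $\Gamma$. Writing $\rho(\gamma)$ in the block-upper-triangular form of Equation (\ref{eq. reducible suspension}), I would derive the required linear lower bound by combining two elementary lower bounds on $s_1(\rho(\gamma))$: one coming from the restriction of $\rho(\gamma)$ to $P$, and one from the action on a vector transverse to $P$.

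Identifying $P$ with the span of the first two standard basis vectors, $\rho(\gamma)$ preserves $P$ and acts on it as $e^{-\varphi(\gamma)/2}\rho_P(\gamma)$ (the bottom-left block vanishes), so
$$s_1(\rho(\gamma))\ge s_1\!\left(e^{-\varphi(\gamma)/2}\rho_P(\gamma)\right)=e^{-\varphi(\gamma)/2}s_1(\rho_P(\gamma)).$$
On the other hand, the third column of $\rho(\gamma)$ has last entry $\pm e^{\varphi(\gamma)}$, so
$$s_1(\rho(\gamma))\ge \|\rho(\gamma)e_3\|\ge e^{\varphi(\gamma)}.$$
Taking logarithms and combining yields
$$\log s_1(\rho(\gamma))\ge \max\!\left(\varphi(\gamma),\ -\tfrac{1}{2}\varphi(\gamma)+\log s_1(\rho_P(\gamma))\right).$$

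The elementary identity I would invoke is that for all real numbers $x$ and $A$, $\max(x,-x/2+A)\ge \tfrac{2}{3}A$, since the two affine functions of $x$ meet exactly where both equal $\tfrac{2}{3}A$. Applying this with $x=\varphi(\gamma)$ and $A=\log s_1(\rho_P(\gamma))$ produces
$$\log s_1(\rho(\gamma))\ge \tfrac{2}{3}\log s_1(\rho_P(\gamma)).$$
Since $\rho$ is derived from Barbot, the representation $\rho_P$ is quasi-isometric, hence there exist $a_0\ge 1$, $b_0\ge 0$ with $\log s_1(\rho_P(\gamma))\ge a_0^{-1}|\gamma|-b_0$, and one concludes with $a=\tfrac{3}{2}a_0$ and $b=\tfrac{2}{3}b_0$. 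I do not anticipate any serious obstacle: the only point to pay attention to is that $\varphi(\gamma)$ is not assumed to have a definite sign, so one needs the $\max$-inequality to hold uniformly in $x$, which it does.
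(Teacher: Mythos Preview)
Your proof is correct and follows essentially the same approach as the paper: both derive the two lower bounds $s_1(\rho(\gamma))\ge e^{-\varphi(\gamma)/2}s_1(\rho_P(\gamma))$ and $s_1(\rho(\gamma))\ge e^{\varphi(\gamma)}$ from the block form, and then combine them with the quasi-isometry of $\rho_P$. The only difference is cosmetic: the paper phrases the last step as ``up to changing $A$ by a smaller constant'', whereas you make the sharp constant $\tfrac{2}{3}$ explicit via the inequality $\max(x,-x/2+A)\ge \tfrac{2}{3}A$.
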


This is a special case of Lahn \cite[Proposition 3.4]{lahn}, we include a proof for completeness.  

\begin{proof}[Proof of Lemma \ref{lem:DFBisQI}]

%As the statement is invariant under conjugacy in $\GL_3(\R)$, we may assume without loss of generality that the $\rho$-invariant hyperplane $P$ is spanned by the first two standard vectors of $\R^3$.
%That is, for all $\gamma\in\Gamma$ we have $$\rho(\gamma)=\left( \begin{matrix}
%e^{-\frac{1}{2}\varphi(\gamma)}\rho_P(\gamma) & \kappa(\gamma)\\
%0 & \pm e^{\varphi(\gamma)}
%\end{matrix} \right),$$ \noindent for some morphisms $\rho_P:\Gamma\to\SL_2^\pm(\RR)$ and $\varphi:\Gamma\to\RR$, and some $\kappa:\Gamma\to\RR^2$.
%By hypothesis we know that $\rho_P$ is quasi-isometric.

We use the notation of Equation (\ref{eq. reducible suspension}).
Let $\Vert\cdot\Vert$ denote the standard Euclidean norm on $\RR^3$.
By abuse of notations, we use the same symbol for the induced norm on $P$ and for the corresponding operator norms. 
Then $$\Vert \rho(\gamma)\Vert\geq \max\{ e^{-\frac{1}{2}\varphi(\gamma)}\Vert\rho_P(\gamma)\Vert, e^{\varphi(\gamma)} \}$$ \noindent for all $\gamma\in\Gamma$. As $\rho_P$ is quasi-isometric, by Equation (\ref{eq: QI in SLd}) we have $$\Vert \rho(\gamma)\Vert\geq \max\{ Be^{-\frac{1}{2}\varphi(\gamma)+A\vert\gamma\vert}, e^{\varphi(\gamma)} \}$$ \noindent for all $\gamma\in\Gamma$ and some $0<A,B\leq 1$.
Up to changing $A$ by a smaller constant if necessary, we have $\Vert \rho(\gamma)\Vert\geq Be^{A\vert\gamma\vert}$ for all $\gamma\in\Gamma$.
\end{proof}

We now turn to the converse statement (Corollary \ref{cor:redsuspensionQIisDFB} below).

\begin{lemma}\label{lem: restriction to plane purely hyperbolic}

Let $\Gamma$ be a finitely generated non-abelian free group and \(\rho:\Gamma \to \SL_3(\R)\) be a reducible suspension preserving some hyperplane $P$.
If \(\rho\) is quasi-isometric, then for every $1\neq \gamma\in\Gamma$ the element $\rho_P(\gamma)$ is hyperbolic.
\end{lemma}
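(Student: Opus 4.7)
The plan is to argue by contraposition: assume $\rho_P(\gamma)$ is not hyperbolic for some $1\neq \gamma\in\Gamma$ and derive a contradiction. I will use throughout that $\rho$ being quasi-isometric implies $\rho$ is faithful and discrete (Subsection~\ref{subsec: propsQIreps}), hence so is $\rho_P$ by \cite[Proposition 3.1]{lahn}, and that $\rho(\Gamma)$ contains no non-trivial unipotent (Remark~\ref{rem: unipotents}). Writing $\rho$ in the block form of Equation~\eqref{eq. reducible suspension}, the morphisms $\varphi$ and $\epsilon$ take values in abelian groups and hence vanish on commutators. The analysis splits into three cases according to the type of $\rho_P(\gamma)\in\SL_2^\pm(\R)$.

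\emph{Torsion case.} If $\rho_P(\gamma)^N=\pm I$ for some $N\geq 1$ (this covers elliptics of rational rotation angle, $\rho_P(\gamma)=\pm I$, and $\det=-1$ with trace $0$), I replace $\gamma$ by $\gamma^{2N}$ so that $\rho_P(\gamma)=I$. Since the centralizer of $\gamma$ in the non-abelian free group $\Gamma$ is cyclic, I can pick $\eta\in\Gamma$ not commuting with $\gamma$. For $\alpha:=[\gamma,\eta]\neq 1$ the block form forces
\[\rho(\alpha)=\begin{pmatrix} I & \kappa(\alpha) \\ 0 & 1\end{pmatrix},\]
and faithfulness of $\rho$ gives $\kappa(\alpha)\neq 0$, so $\rho(\alpha)$ is a non-trivial unipotent, contradicting Remark~\ref{rem: unipotents}. \emph{Irrational elliptic case.} If $\rho_P(\gamma)$ is elliptic with irrational rotation angle, Kronecker equidistribution yields a subsequence with $\rho_P(\gamma)^{n_k}\to I$; since $\rho_P$ is faithful and the $\gamma^{n_k}$ are distinct, this contradicts the discreteness of $\rho_P(\Gamma)$.

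\emph{Parabolic case.} As $\rho_P$ is faithful and $\Gamma$ is non-abelian free, $\rho_P(\Gamma)\cong\Gamma$ is non-solvable, hence cannot lie in a Borel subgroup of $\SL_2^\pm(\R)$; thus there exists $\eta\in\Gamma$ whose image does not preserve the parabolic fixed line of $\rho_P(\gamma)$, and such $\eta$ automatically fails to commute with $\gamma$. I then look at $\delta_n:=[\gamma^n,\eta]$. Using that $\langle\gamma,\eta\rangle\leq\Gamma$ is a rank-$2$ free subgroup (hence quasi-isometrically embedded in $\Gamma$), the word length satisfies $|\delta_n|\geq c_1 n$ for some $c_1>0$. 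Because $\varphi$ and $\epsilon$ vanish on commutators,
\[\rho(\delta_n)=\begin{pmatrix} \rho_P(\delta_n) & \kappa(\delta_n) \\ 0 & 1\end{pmatrix}.\]
A direct $\SL_2$ computation gives $\mathrm{tr}(\rho_P(\delta_n))=2+n^2 c_0^2$ with $c_0\neq 0$, so $\|\rho_P(\delta_n)\|=O(n^2)$. A careful expansion of $\kappa(\delta_n)$ via the block-triangular cocycle formula for $\rho(\gamma^n)\rho(\eta)\rho(\gamma^{-n})\rho(\eta)^{-1}$ then yields $\|\kappa(\delta_n)\|=O(\mathrm{poly}(n))$, whence $\|\rho(\delta_n)^{\pm 1}\|=O(\mathrm{poly}(n))$ and the Cartan distance from $\rho(\delta_n)$ to the identity is $O(\log n)$. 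This contradicts the linear lower bound $\gtrsim|\delta_n|\sim n$ forced by QI.

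The main obstacle will be the polynomial cocycle bound in the parabolic case. When $\varphi(\gamma)\neq 0$, each factor $\rho(\gamma^{\pm n})$ individually has norm of order $e^{n|\varphi(\gamma)|}$, so a naive product bound on $\rho(\delta_n)$ is useless. The point is that in the explicit block-triangular expansion the exponential prefactors $e^{\pm n\varphi(\gamma)}$ arising from conjugating $\rho(\eta)$ by $\rho(\gamma^n)$ must cancel against those appearing in the cocycle sum $\kappa(\gamma^n)=\sum_{k=0}^{n-1}\rho(\gamma)^k|_{P}\,\kappa(\gamma)\,\epsilon(\gamma)^{n-1-k}e^{(n-1-k)\varphi(\gamma)}$, leaving only the polynomial growth contributed by the Jordan block of the parabolic $\rho_P(\gamma)$. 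Making this cancellation explicit is the technical heart of the argument.
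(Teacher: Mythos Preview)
Your argument is correct, but the parabolic case can be done without the cancellation you flag as the ``technical heart.'' The paper uses conjugates $\gamma^n\eta\gamma^{-n}$ rather than commutators $[\gamma^n,\eta]$ (these already have word length $\sim n$, so the extra $\rho(\eta)^{-1}$ buys nothing) and, crucially, first observes that $\varphi(\gamma)\neq 0$ (else $\rho(\gamma^2)$ would be unipotent) and then \emph{conjugates $\rho$ so that $\rho(\gamma)$ is block-diagonal}, i.e.\ $\kappa(\gamma)=0$. Writing $\tau=e^{-\varphi(\gamma)/2}<1$ (after replacing $\gamma$ by $\gamma^{-1}$ if needed) and $g=\rho_P(\gamma)$, one gets directly
\[
\rho(\gamma^n\eta\gamma^{-n})=\begin{pmatrix}\delta\,g^nhg^{-n} & \tau^{3n}g^n\kappa(\eta)\\ 0 & \pm\delta^{-2}\end{pmatrix},
\]
with polynomial top-left block (since $g$ is parabolic) and off-diagonal block tending to $0$; no exponential prefactors ever need to be cancelled against a cocycle sum. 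Your commutator route also goes through once the same conjugation is made (then $\kappa(\delta_n)=s^{-1}(\tau^{3n}g^n\kappa(\eta)-\rho_P(\delta_n)\kappa(\eta))=O(n^2)$), but it is strictly more computation. On the other hand, your treatment of the irrational elliptic case via Lahn's discreteness result for $\rho_P$ is a legitimate shortcut; the paper instead handles elliptics by the same sub-exponential-growth-of-conjugates argument as parabolics, keeping the lemma self-contained and independent of \cite{lahn}.
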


\begin{proof}

We keep the notations from Equation (\ref{eq. reducible suspension}) and suppose by contradiction that $-2\leq \text{tr}(\rho_P(\gamma))\leq 2$ for some $\gamma\neq 1$.
We fix any element $\eta\in\Gamma$ not belonging to the largest cyclic subgroup of $\Gamma$ containing $\gamma$. 
We arrive to the desired contradiction by a case-by-case analysis.
 
On the one hand, we first note that $\rho_P(\gamma)\neq\pm\text{id} $, otherwise the matrix $\rho([\gamma,\eta])$ would be unipotent contradicting Remark \ref{rem: unipotents}.
Hence, it only remains to rule out the possibility of $\rho_P(\gamma)$ being either parabolic or elliptic. 
We only treat the first case, the second one can be handled analogously. 

Suppose then that $\text{tr}(\rho_P(\gamma))=\pm 2$ and $\rho_P(\gamma)\neq \pm \text{id}$.
Up to replacing $\gamma$ by $\gamma^2$ if needed, we may assume $\text{tr}(\rho_P(\gamma))= 2$.
Hence, we may replace $\rho$ by a conjugate representation in order to have $$\rho(\gamma)=\left( \begin{matrix}
e^{-\frac{1}{2}\varphi(\gamma)} \left(\begin{smallmatrix}
1 & \pm 1\\
0 &  1 \end{smallmatrix}\right) & \kappa(\gamma)\\
0 &  e^{\varphi(\gamma)}
\end{matrix} \right).$$
\noindent Note then that $\varphi(\gamma)$ must be different from $0$, otherwise we get a contradiction with Remark \ref{rem: unipotents}.
We write $$\rho(\gamma)=\left( \begin{matrix}
\tau \cdot g & 0\\
0 &  \tau^{-2}
\end{matrix} \right)$$ \noindent for simplicity, where $\tau:=e^{-\frac{1}{2}\varphi(\gamma)}$ and $g:= \left(\begin{smallmatrix}
1 & \pm 1\\
0 &  1 \end{smallmatrix}\right)$. 
Up to replacing $\gamma$ by $\gamma^{-1}$, we may assume $\tau <1$. 

We now claim that the norm $\Vert\rho(\gamma^n\eta\gamma^{-n})\Vert$ does not grow exponentially as $n\to\infty$, which will give the desired contradiction.
Indeed, we have $$\rho(\eta)=\left( \begin{matrix}
\delta \cdot h & \kappa\\
0 &  \pm\delta^{-2}
\end{matrix} \right),$$ \noindent for some $h\in\SL_2^\pm(\R)$, a real number $\delta > 0$, and a vector $\kappa\in\R^2$.
Hence, $$\rho(\gamma^n\eta\gamma^{-n})=\left( \begin{matrix}
\delta \cdot g^nhg^{-n} & \tau^{3n} g^{n}\cdot\kappa\\
0 &  \pm\delta^{-2}
\end{matrix} \right).$$ \noindent As the norms $\Vert g^nhg^{-n} \Vert $ and $\Vert  g^{n} \Vert $ grow at most polynomially as $n\to\infty$ and $\tau <1$, the proof is complete.

\end{proof}

\begin{cor}\label{cor:redsuspensionQIisDFB}
Let $\Gamma$ be a finitely generated non-abelian free group and \(\rho:\Gamma \to \SL_3(\R)\) be a reducible suspension.
If \(\rho\) is quasi-isometric, then it is derived from Barbot.
\end{cor}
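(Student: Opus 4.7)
The plan is to reduce to a purely hyperbolic discrete faithful Fuchsian representation and then invoke the Milnor--Schwarz lemma applied to the convex core.

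First, by Lemma \ref{lem: restriction to plane purely hyperbolic} every $\rho_P(\gamma)$ with $\gamma \neq 1$ is hyperbolic, so $\rho_P$ is faithful. Setting $\Gamma_0 := \rho_P^{-1}(\SL_2(\R))$, which has index at most $2$ in $\Gamma$ and is itself a non-abelian free group of finite rank, I compose $\rho_P|_{\Gamma_0}$ with the quotient $\SL_2(\R) \to \PSL_2(\R)$ to obtain a representation $\bar\rho: \Gamma_0 \to \PSL_2(\R)$. This $\bar\rho$ is faithful (no element can map to $-I$, whose trace $-2$ makes it non-hyperbolic) and every non-trivial image is a loxodromic isometry of $\HH^2$.

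The crux is to show that $\bar\rho$ is discrete. Suppose otherwise and consider the closed Lie subgroup $H := \overline{\bar\rho(\Gamma_0)} \subset \PSL_2(\R)$, whose identity component $H^0$ has positive dimension. I would go through the possibilities for $H^0$. If $H^0 = \PSL_2(\R)$, then $\bar\rho(\Gamma_0)$ is dense and therefore meets the non-empty open set $\{|\mathrm{tr}| < 2\}$ of elliptic isometries, contradicting the purely hyperbolic hypothesis. If $H^0$ is the $2$-dimensional Borel, then $H$ is contained in its normalizer (which equals the Borel itself), so $\bar\rho$ embeds the non-solvable group $\Gamma_0$ into a solvable group, a contradiction. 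If $\dim H^0 = 1$, then $H^0$ is a one-parameter subgroup and analysis of the normalizer shows that $H$ is either virtually abelian (for the hyperbolic or elliptic tori) or again contained in the Borel (for the unipotent subgroup); neither is possible for a faithful image of a non-abelian free group.

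Once $\bar\rho$ is discrete, $\bar\rho(\Gamma_0)$ is a finitely generated Fuchsian group with no parabolic elements. Classical Fuchsian theory gives geometric finiteness, and the absence of parabolics upgrades this to convex cocompactness. By Milnor--Schwarz applied to the proper cocompact action of $\Gamma_0$ on its convex core in $\HH^2$, $\bar\rho|_{\Gamma_0}$ is a quasi-isometric embedding into $\HH^2$, and hence into $\PSL_2(\R)$ (since the orbit map $\PSL_2(\R) \to \HH^2$ has compact fibers). Therefore $\rho_P|_{\Gamma_0}$ is quasi-isometric, and the argument of Remark \ref{rem: finite index} applied to $\rho_P$ concludes that $\rho_P$ itself is quasi-isometric, i.e.\ $\rho$ is derived from Barbot. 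The main obstacle is the discreteness step: each low-dimensional case for $H^0$ must be ruled out using the non-solvability of $\Gamma_0$ together with the purely hyperbolic hypothesis.
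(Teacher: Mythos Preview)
Your proof is correct and follows essentially the same route as the paper's: use Lemma~\ref{lem: restriction to plane purely hyperbolic} to get faithfulness and the purely-hyperbolic property, pass to the index-$\le 2$ subgroup $\Gamma_0$ landing in $\SL_2(\R)$, argue discreteness (the paper does this by citing the dichotomy in the proof of Proposition~\ref{prop:robustfaithfulSL2}, while you spell out the case analysis on $\dim H^0$ explicitly), and then conclude convex cocompactness and quasi-isometry via Remark~\ref{rem: finite index}. The only difference is the level of detail in the discreteness step and your explicit invocation of Milnor--Schwarz, neither of which changes the substance.
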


\begin{proof}

We need to show that $\rho_P$ is quasi-isometric, for which we first show that it is faithful and discrete (compare with Lahn \cite[Proposition 3.1]{lahn}).
Indeed, Lemma \ref{lem: restriction to plane purely hyperbolic} directly implies that $\rho_P$ is faithful.
With this at hand we can show that it is discrete.
Otherwise, $\rho_P(\Gamma)$ would contain an elliptic element (c.f. the proof of Proposition \ref{prop:robustfaithfulSL2}), and this would contradict again Lemma \ref{lem: restriction to plane purely hyperbolic}. 

Finally, by Remark \ref{rem: finite index} in order to show that $\rho_P$ is quasi-isometric it suffices to show that its restriction to $$\Gamma_0:=\{\gamma\in\Gamma: \rho_P(\gamma)\in\SL_2(\R)\}$$ \noindent is quasi-isometric.
But note that the representation $\rho_P\vert_{\Gamma_0}:\Gamma_0\to\SL_2(\R)$ is faithful and discrete, and by Lemma \ref{lem: restriction to plane purely hyperbolic} does not contain parabolic elements. 
As in the proof of Proposition \ref{prop:robustfaithfulSL2}, this implies that $\rho_P\vert_{\Gamma_0}$ is convex co-compact.

\end{proof}

To finish the proof of Proposition \ref{p.equiv} we show the following.

\begin{cor}\label{cor:DFBandrobQI}
Let $\Gamma$ be a finitely generated non-abelian free group and \(\rho:\Gamma \to \SL_3(\R)\) be a reducible suspension preserving some hyperplane $P$.
Then \(\rho\) is derived from Barbot if and only if \(\rho\) is robustly quasi-isometric among representations preserving \(P\).
\end{cor}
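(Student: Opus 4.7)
The plan is to prove each direction separately, each reducing to results already established in the paper together with the openness of the convex cocompact condition in rank one.

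For the implication ``robustly quasi-isometric (among $P$-preserving representations) $\Rightarrow$ derived from Barbot'', the argument is immediate: the hypothesis forces $\rho$ itself to be quasi-isometric, and Corollary~\ref{cor:redsuspensionQIisDFB} then yields that $\rho_P$ is quasi-isometric, i.e.\ $\rho$ is derived from Barbot.

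For the converse, assume $\rho$ is derived from Barbot, so that $\rho_P:\Gamma\to \SL_2^\pm(\RR)$ is quasi-isometric. First, observe that on the space of representations preserving $P$, the assignment $\rho'\mapsto \rho'_P$ is continuous at $\rho$: the quantity $\det_P(\rho'(\gamma))$ is a polynomial in the entries of $\rho'(\gamma)$ and is non-vanishing at $\rho$ (since $\rho(\gamma)\in\SL_3(\RR)$ forces $\det_P(\rho(\gamma))$ times the third eigenvalue to equal $1$), so the normalization by $|\det_P(\rho'(\gamma))|^{1/2}$ depends continuously on $\rho'$. Next, pass to the index-two subgroup $\Gamma_0\subset \Gamma$ on which $\rho_P$ takes values in $\SL_2(\RR)$; by Remark~\ref{rem: finite index} the restriction $\rho_P|_{\Gamma_0}$ remains quasi-isometric. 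Since $\Gamma_0$ is a finitely generated non-abelian free group (hence word hyperbolic) and $\SL_2(\RR)$ is rank one, a quasi-isometric representation $\Gamma_0\to \SL_2(\RR)$ coincides with a convex cocompact (equivalently, Anosov) representation, and being Anosov is an open condition in the representation variety. Therefore, for every $\rho'$ preserving $P$ sufficiently close to $\rho$, the representation $\rho'_P|_{\Gamma_0}$ is convex cocompact, hence quasi-isometric. Applying Remark~\ref{rem: finite index} again, $\rho'_P$ is quasi-isometric, so $\rho'$ is derived from Barbot and Lemma~\ref{lem:DFBisQI} yields that $\rho'$ is quasi-isometric.

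The only (minor) technical point is the continuity of $\rho'\mapsto \rho'_P$ at $\rho$ and the passage to $\Gamma_0$; beyond that, the argument is a direct assembly of Corollary~\ref{cor:redsuspensionQIisDFB}, Lemma~\ref{lem:DFBisQI}, Remark~\ref{rem: finite index}, and the openness of the Anosov/convex cocompact condition in rank one. I do not anticipate a genuine obstacle.
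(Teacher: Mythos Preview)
Your proof is correct and follows essentially the same approach as the paper's: both directions rest on Lemma~\ref{lem:DFBisQI}, Corollary~\ref{cor:redsuspensionQIisDFB}, the continuity of $\rho'\mapsto\rho'_P$, and the openness of convex cocompactness in rank one. The paper's proof simply asserts that ``quasi-isometric representations into $\SL^\pm(P)$ form a stable class'', whereas you spell this out via the passage to the finite-index subgroup $\Gamma_0$ landing in $\SL_2(\R)$; this is a harmless elaboration rather than a different route.
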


\begin{proof}

This follows from Lemma \ref{lem:DFBisQI} and Corollary \ref{cor:redsuspensionQIisDFB}, and the fact that quasi-isometric representations into $\SL^\pm(P)$ form a stable class.
Indeed, every small perturbation of $\rho$ preserving $P$ induces a small perturbation of $\rho_P$ and conversely.
\end{proof}

\subsection{Finding eigenvalues with repeated moduli}\label{ss.multiplicity}

The goal of this subsection is to prove Proposition \ref{prop-invariantplanejordan} below, which together with Proposition \ref{prop: criterion for non robust qi} is the main step in the proof of Theorem \ref{t.main}.

For $g \in \SL_3(\RR)$ preserving the hyperplane $P$ we let $$\lambda_{1}(g)\geq \lambda_{2}(g)$$ \noindent be the moduli of the eigenvalues of $g$ restricted to $P$. 
We also let $$ \lambda_{\perp}(g) := \frac{1}{\lambda_{1}(g)\cdot\lambda_{2}(g)} $$ \noindent be the modulus of the complementary eigenvalue of $g$. 
Observe that $\lambda_\perp$ is a group morphism: if $h\in\SL_3(\R)$ also preserves $P$, then $$\lambda_{\perp}(gh)=\lambda_{\perp}(g)\cdot \lambda_{\perp}(h).$$ 
Note also that if $\lambda_1(g)>\lambda_2(g)$ then all the eigenvalues of $g$ are real and the corresponding Jordan blocks have size at most two.

\begin{prop}\label{prop-invariantplanejordan}
Suppose that $\Gamma$ is a non-abelian free group of rank $k\geq 2$, and let $\rho: \Gamma \to \SL_3(\RR)$ be a derived from Barbot representation preserving the hyperplane $P$.
Assume moreover that $\rho_P$ preserves the orientation of $P$.
Suppose that $\rho$ is not Anosov and fix some neighborhood $\mathscr{U}$ of $\rho$.
Then there exists $\rho'\in\mathscr{U}$ preserving $P$, and free generators $a=c_1,b=c_2,c_3,\dots, c_k$ of $\Gamma$ such that the following holds:

\begin{enumerate}
\item there exist non-zero integers $n$ and $m$ so that $\rho'(a^mb^n)$ satisfies $$ \lambda_1(\rho'(a^mb^n))=\lambda_\perp(\rho'(a^mb^n))\neq 1. $$
\item The matrices $\rho'(a)$ and $\rho'(b)$ are loxodromic, and the eigenline $L_0$ corresponding to $\lambda_2(\rho'(a^mb^n))$ is contained in the repelling hyperplane $\Ecs(\rho'(a))$.
\end{enumerate}
\end{prop}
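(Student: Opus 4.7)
The plan is to exploit the explicit form (\ref{eq. reducible suspension}) of a reducible suspension with orientation-preserving $\rho_P$: the three eigenvalues of $\rho(\gamma)$ are the positive reals $e^{-\varphi(\gamma)/2}\lambda_u(\rho_P(\gamma))$, $e^{-\varphi(\gamma)/2}\lambda_u(\rho_P(\gamma))^{-1}$, and $e^{\varphi(\gamma)}$. Consequently the eigenvalue coincidence $\lambda_1(\rho'(\omega))=\lambda_\perp(\rho'(\omega))\neq 1$ for $\omega=a^{m}b^{n}$ is equivalent to the sharp identity $\log\lambda_u(\rho'_P(\omega))=\frac{3}{2}\varphi'(\omega)$, with $\varphi'(\omega)>0$ ensuring that the common value is not $1$. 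A crucial observation is that whenever $a$ satisfies the \emph{strict} inequality $\log\lambda_u(\rho'_P(a))<\frac{3}{2}\varphi'(a)$ with $\varphi'(a)>0$, the largest-modulus eigenvalue of $\rho'(a)$ is $e^{\varphi'(a)}$ (which corresponds to an eigenline outside $P$), while its two $\rho'_P$-eigenvalues occupy the middle and smallest slots; in that case the repelling hyperplane $\Ecs(\rho'(a))$ equals $P$ itself, and since $L_0\subset P$ always (it is the repelling line of $\rho'_P(\omega)$ inside $P$), condition~(2) becomes automatic.

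First I would apply Theorem~\ref{teo: lahn in examples}: since $\rho$ is not Anosov there is $\gamma_0\in\Gamma$ with $\varphi(\gamma_0)>0$ (after replacing by $\gamma_0^{-1}$ if necessary) and $\log\lambda_u(\rho_P(\gamma_0))\leq\frac{3}{2}\varphi(\gamma_0)$. Replacing $\gamma_0$ by its primitive root preserves this ratio, so I may assume $\gamma_0$ is not a proper power. Using a Whitehead/Nielsen manipulation---combined if necessary with a small perturbation of $\varphi$ that pushes the inequality strictly below $\frac{3}{2}$---I would arrange $\gamma_0$ to be a primitive element and extend it to a free basis $\{a=\gamma_0,b,c_3,\dots,c_k\}$ of $\Gamma$. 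The element $b$ is chosen generically so that $\rho(b)$ is loxodromic and $\log\lambda_u(\rho_P(b))/|\varphi(b)|>\frac{3}{2}$, providing interpolation room on the opposite side of the threshold.

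Next I would analyse the family $\omega_{m,n}=a^{m}b^{n}$. By standard translation-length asymptotics in the convex co-compact representation $\rho_P\colon\Gamma\to\SL_2(\R)$, one has $\log\lambda_u(\rho_P(a^{m}b^{n}))=m\log\lambda_u(\rho_P(a))+n\log\lambda_u(\rho_P(b))+O(1)$ as $m,n\to\infty$ (with appropriate signs), whereas $\varphi(a^{m}b^{n})=m\varphi(a)+n\varphi(b)$. Solving $\log\lambda_u(\rho_P(a^{m}b^{n}))=\frac{3}{2}\varphi(a^{m}b^{n})$ to leading order pins the ratio $m/n$ to a positive value, and the $O(1)$ discrepancy can be absorbed by a perturbation of $\varphi(b)$ of size $O(1/n)$, which is arbitrarily small once $n$ is large. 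This produces $\rho'\in\mathscr{U}$ realising condition~(1); because the perturbation only alters $\varphi$ on one generator by a tiny amount, it preserves the strict inequality $\log\lambda_u(\rho'_P(a))<\frac{3}{2}\varphi'(a)$ and the loxodromicity of $\rho'(a),\rho'(b)$, so condition~(2) follows from the opening observation.

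The main obstacle I foresee is the primitivity step: in a free group of rank $\geq 2$ not every element that fails to be a proper power is primitive (commutators in $F_2$ being the standard example), so taking $a=\gamma_0$ directly may fail. I would handle this via a Whitehead-style normalisation of $\gamma_0$ together, if necessary, with a preliminary small perturbation of $\rho$ that moves Lahn's infimum strictly below $\frac{3}{2}$ and so widens the supply of primitive candidates with the required ratio. A secondary technical point is making the interpolation argument of paragraph three rigorous, which requires sharp translation-length asymptotics for products of independent hyperbolic elements in $\SL_2(\R)$; these should be standard consequences of the convex co-compact geometry of $\rho_P$.
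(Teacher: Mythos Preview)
Your overall architecture matches the paper's: find a free generator $a$ with $\log\lambda_u(\rho_P(a))<\tfrac{3}{2}\varphi(a)$ (equivalently $a\in V_\rho$), find a complementary generator $b$ on the other side of the threshold, then interpolate along words $a^mb^n$ and absorb the bounded error by a small scaling of $\varphi$ on one generator. Your observation that $a\in V_{\rho'}$ forces $\Ecs(\rho'(a))=P\ni L_0$, so that condition~(2) is automatic, is exactly how the paper handles that point. The final interpolation step is also essentially the paper's argument (Remark~\ref{rem: scaling generator in P} combined with the product asymptotics for $\lambda_1$).

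The two places where your sketch is genuinely incomplete are precisely the two places where the paper does real work.

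\textbf{Primitivity of $a$.} You correctly flag this as the main obstacle, but ``Whitehead-style normalisation together with a preliminary perturbation'' is not a proof. The paper's route is geometric: starting from any $\gamma_0\in V_\rho$ (produced by a perturbed version of Theorem~\ref{teo: lahn in examples}), it passes to a length-minimising multi-curve in the homology class $[\gamma_0]$ on the convex core of $\HH^2/\rho_P(\Gamma)$. Such a minimiser is a disjoint union of simple closed geodesics; a short argument using Remark~\ref{rem: scaling generator in P}(3) shows one of these components still lies in $V_\rho$. A simple, homologically non-trivial closed curve on a surface with boundary is always part of a free basis of the fundamental group, and this yields the primitive $a$. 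This is where the orientation-preserving hypothesis on $\rho_P$ is used.

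\textbf{Choice of $b$.} Saying $b$ is ``chosen generically'' with $\log\lambda_u(\rho_P(b))/|\varphi(b)|>\tfrac{3}{2}$ hides a real difficulty: once $a$ is fixed you are not free to perturb $\varphi(b)$ by more than a small amount, and Nielsen moves $b\mapsto ba^{\pm k}$ can carry $b$ from $V_\rho^{-1}$ directly into $V_\rho$ without ever landing outside $V_\rho\cup V_\rho^{-1}$ (the limiting ratio at both ends is governed by $a$, which is already below $\tfrac{3}{2}$). The paper resolves this with a Euclidean-algorithm-style induction (Proposition~\ref{prop. generator in V and complementary not in V}): if both $a,b\in V_\rho$ with $\lambda_\perp(\rho(a))\ge\lambda_\perp(\rho(b))$, replace $(a,b)$ by $(b,ab^{-p})$ for the $p$ that pushes $\lambda_\perp$ closest to $1$; if the process never terminates then $\lambda_\perp(\rho(\sigma_n(b)))\to 1$, which contradicts $\lambda_\perp^{1/2}\lambda_1>1$ from the derived-from-Barbot hypothesis. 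So some $\sigma_n(b)$ eventually exits $V_\rho\cup V_\rho^{-1}$.

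In short: your outline is correct and aligned with the paper, but both substantive steps you defer are non-trivial and require the specific geometric and combinatorial arguments the paper supplies.
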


The proof of Proposition \ref{prop-invariantplanejordan} involves several intermediate results.
Roughly, the outline is the following. 
We first prove in Proposition \ref{prop. generator in V} that Lahn's Theorem \ref{teo: lahn in examples} implies that, after possibly perturbing $\rho$ slightly inside the set of representations preserving $P$, there is a free generating set of $\Gamma$ containing an element $a$ so that $$\lambda_2(\rho(a))<\lambda_1(\rho(a))<\lambda_\perp(\rho(a)).$$
\noindent We then show in Proposition \ref{prop. generator in V and complementary not in V} that we may also assume that a complementary generating element $b$ satisfies $$\lambda_2(\rho(b))<\lambda_\perp(\rho(b))<\lambda_1(\rho(b)).$$
\noindent The fact that $\rho$ is derived from Barbot together with a continuity argument will then prove the statement.

We assume throughout that $\Gamma$ is a non abelian free group of rank $k\geq 2$, and fix a free generating ordered set $F_0\subset\Gamma$, which will serve as a reference to construct the one in Proposition \ref{prop-invariantplanejordan}.
This choice induces a morphism $$\psi_0:\Gamma\to \ZZ^k,$$ \noindent by identifying the abelianization $H^1(\Gamma):=\Gamma/[\Gamma,\Gamma]$ of $\Gamma$ with $\ZZ^k$.
We will say that two elements in $\Gamma$ are \textit{homologous} if they have the same image under $\psi_0$.

We have the following properties of derived from Barbot representations that we will use all the time (recall the notations from Equation (\ref{eq. reducible suspension})).

\begin{rem}\label{rem: scaling generator in P}
Suppose that $\rho:\Gamma\to\SL_3(\R)$ is a derived from Barbot representation preserving $P$. 
Then:
\begin{enumerate}
\item for all $\gamma\in\Gamma$ we have $$\lambda_{1}(\rho(\gamma))> \lambda_{2}(\rho(\gamma)).$$
\item The largest modulus among the eigenvalues of $\rho_P(\gamma)$ is $\lambda_\perp^{\frac{1}{2}}(\rho(\gamma))\cdot\lambda_1(\rho(\gamma))$.
In particular, as $\rho_P$ is quasi-isometric we have $$\displaystyle\inf_{\gamma\in\Gamma}\lambda_\perp^{\frac{1}{2}}(\rho(\gamma))\cdot\lambda_1(\rho(\gamma))>1.$$
\item Let $\gamma_1,\dots,\gamma_s\in\Gamma$ be elements so that the eigenlines (in $P$) of the $\rho_P(\gamma_i)$ are all different. 
Then $$\frac{\lambda_1(\rho(\gamma_1^{m_1}\dots\gamma_s^{m_s}))}{\lambda_1(\rho(\gamma_1))^{m_1}\dots\lambda_1(\rho(\gamma_s))^{m_s}}$$
\noindent converges to some positive constant only depending on the eigenlines of the $\rho_P(\gamma_i)$, as $m_1,\dots,m_s\to+\infty$ (see e.g. \cite[Lemma 7.5]{benoistquint}).
\item Fix some $\eps$ and choose a generator $c_0\in F_0$.
We may consider another derived from Barbot representation, which is obtained by \emph{scaling the $c_0$-action on $P$ by the factor $\eps$}. 
More precisely, let $\rho_\eps:\Gamma\to\SL_3(\R)$ be the representation given by $$\rho_\eps(c_0):=\left( \begin{matrix}
e^{-\frac{1}{2}(\varphi(c_0)+\eps)}\rho_P(c_0) & \kappa(c_0)\\
0 & \pm e^{\varphi(c_0)+\eps}
\end{matrix} \right),$$ \noindent and $\rho_\eps(c):=\rho(c)$ for all $c\in F_0\setminus\{c_0\}$.

For $\gamma\in\Gamma$ there is an integer $p=p(c_0,\gamma)$ which is the coordinate of $\psi_0(\gamma)$ associated to the generator $c_0$.
One has $$\lambda_\perp(\rho_\eps(\gamma))=e^{\eps p}\cdot\lambda_\perp(\rho(\gamma)) \text{ and } \lambda_1(\rho_\eps(\gamma))=e^{-\frac{1}{2}\eps p}\cdot\lambda_1(\rho(\gamma)).$$
\noindent In particular, $$\frac{\lambda_\perp(\rho_\eps(\gamma))}{\lambda_1(\rho_\eps(\gamma))}=e^{\frac{3}{2}\eps p}\cdot \frac{\lambda_\perp(\rho(\gamma))}{\lambda_1(\rho(\gamma))}.$$
\end{enumerate} 
\end{rem}

Here is a consequence of Lahn's Theorem \ref{teo: lahn in examples} that we will need in the future.

\begin{cor}\label{cor. lahn with perturbation}
Let $\rho: \Gamma \to \SL_3(\RR)$ be a derived from Barbot representation preserving the hyperplane $P$.
Suppose that $\rho$ is not Anosov and fix some neighborhood $\mathscr{U}$ of $\rho$.
Then there exists $\rho'\in\mathscr{U}$ preserving $P$ and $\gamma\in\Gamma$ so that $$\lambda_1(\rho'(\gamma))<\lambda_\perp(\rho'(\gamma)).$$ 
\end{cor}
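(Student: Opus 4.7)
The plan is to invoke Lahn's Theorem \ref{teo: lahn in examples} and split into two cases according to the value of
$$
I := \inf_{\gamma:\varphi(\gamma)\neq 0}\frac{\log\lambda_u(\rho_P(\gamma))}{|\varphi(\gamma)|}.
$$
By Lahn, $\rho$ not Anosov means $I\le 3/2$. Writing $f(\gamma)$ for the quantity being infimized, Remark \ref{rem: scaling generator in P}(2) gives $\log\lambda_u(\rho_P(\gamma))=\log\lambda_1(\rho(\gamma))+\tfrac12\varphi(\gamma)$, so that for any $\gamma$ with $\varphi(\gamma)>0$ one has $f(\gamma)<3/2$ if and only if $\lambda_1(\rho(\gamma))<\lambda_\perp(\rho(\gamma))$. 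Since $f(\gamma)=f(\gamma^{-1})$, we may always arrange $\varphi(\gamma)>0$. Consequently, if $I<3/2$ there is nothing to prove: take $\rho':=\rho$.

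The substantive case is $I=3/2$. Choose a sequence $\gamma_n$ with $\varphi(\gamma_n)>0$ and $\delta_n:=f(\gamma_n)-3/2\to 0^+$. The idea is to perturb $\rho$ via the scaling construction of Remark \ref{rem: scaling generator in P}(4) applied to a suitable generator $c_0\in F_0$. This perturbation $\rho_\eps$ preserves $P$, converges to $\rho$ as $\eps\to 0$ (hence lies in $\mathscr{U}$ for $|\eps|$ small), and by the formulas there satisfies
$$
\frac{\lambda_1(\rho_\eps(\gamma_n))}{\lambda_\perp(\rho_\eps(\gamma_n))}=e^{-\frac{3\eps}{2}p_n}\cdot\frac{\lambda_1(\rho(\gamma_n))}{\lambda_\perp(\rho(\gamma_n))},\qquad p_n:=p(c_0,\gamma_n).
$$
Using the identity $\log(\lambda_1/\lambda_\perp)=\varphi\cdot(f-3/2)$ (valid for $\varphi>0$), the desired inequality $\lambda_1(\rho_\eps(\gamma_n))<\lambda_\perp(\rho_\eps(\gamma_n))$ becomes
$$
\tfrac{3\eps}{2}p_n > \varphi(\gamma_n)\,\delta_n. \qquad (\star)
$$
Since $\delta_n\to 0$, the right-hand side shrinks; the remaining question is whether the left-hand side can be kept positive with $|\eps|$ small, i.e.\ whether $p_n$ is sufficiently large compared to $\varphi(\gamma_n)$.

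This last point is handled by a pigeonhole argument on the finite generating set $F_0=\{c_1,\dots,c_k\}$, and it is where the main technical content lies. Setting $M:=k\max_i|\varphi(c_i)|$, the expansion $\varphi(\gamma_n)=\sum_i p(c_i,\gamma_n)\varphi(c_i)$ yields $\max_i|p(c_i,\gamma_n)|\ge \varphi(\gamma_n)/M$. Finiteness of $F_0$ lets one pass to a subsequence along which a single $c_0\in F_0$ realizes this maximum, and a further subsequence along which $p_n$ has constant sign (positive, after possibly replacing $\eps$ by $-\eps$). Along this subsequence $(\star)$ reduces to $\eps>2M\delta_n/3$. Given $\mathscr{U}$, choose $\eps_0>0$ with $\{\rho_\eps:|\eps|<\eps_0\}\subset\mathscr{U}$, pick any $\eps\in(0,\eps_0)$, and then $n$ large enough that $\delta_n<3\eps/(2M)$; setting $\rho':=\rho_\eps$ and $\gamma:=\gamma_n$ finishes the proof. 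The main obstacle is exactly the pigeonhole step: without a uniform lower bound on $|p_n|$ in terms of $\varphi(\gamma_n)$, the perturbation size needed to push $\gamma_n$ across the Anosov threshold could remain bounded away from zero and fail to lie in the prescribed neighborhood $\mathscr{U}$.
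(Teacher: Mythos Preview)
Your argument is correct and follows the same underlying strategy as the paper: invoke Lahn's characterization, and in the borderline case use the scaling deformation of Remark~\ref{rem: scaling generator in P}(4) on a well-chosen generator to push some $\gamma$ across the threshold $\lambda_1=\lambda_\perp$.

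The organization, however, is cleaner than the paper's. The paper works with the equivalent ratio $\log\lambda_1/\log\lambda_\perp$ and, in the borderline case $d=1$, splits into subcases according to whether $\varphi(\gamma_n)=\log\lambda_\perp(\rho(\gamma_n))$ stays bounded, tends to $0$ (ruled out), or tends to $\infty$, treating each separately. Your quantitative inequality $(\star)$ together with the pigeonhole bound $|p_n|\ge \varphi(\gamma_n)/M$ handles all of these at once: the factor $\varphi(\gamma_n)$ cancels and one is left with $\eps>2M\delta_n/3$, which only requires $\delta_n\to 0$. One cosmetic point: you write $\delta_n\to 0^+$, but since $I=3/2$ is an infimum one only has $\delta_n\ge 0$; the argument goes through verbatim with $\delta_n\ge 0$ (when $\delta_n=0$, $(\star)$ reduces to $\eps p_n>0$, and $p_n\neq 0$ since $\varphi(\gamma_n)>0$).
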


\begin{proof}

By Theorem \ref{teo: lahn in examples} we may find a sequence $\gamma_n\in\Gamma$ with $\varphi(\gamma_n)>0$ and some $d\leq 1$ so that $$\displaystyle\lim_{n\to+\infty}\frac{\log\lambda_1(\rho(\gamma_n))}{\log\lambda_\perp(\rho(\gamma_n))}=d.$$
\noindent If $d<1$ there is nothing to prove, so we assume \begin{equation}\label{eq. lahn perturbed}
\displaystyle\lim_{n\to+\infty}\frac{\log\lambda_1(\rho(\gamma_n))}{\log\lambda_\perp(\rho(\gamma_n))}=1.
\end{equation}
\noindent Observe that $\psi_0(\gamma_n)\neq 0$ for all $n$, as $\log\lambda_\perp(\rho(\gamma_n))>0$.
The proof will be split into several cases. 

First assume that the sequence $\{\log\lambda_\perp(\rho(\gamma_n))\}_n$ is bounded away from $0$ and $\infty$. 
Up to replacing $\gamma_n$ by appropriate conjugates if needed, this implies that the sequence $\{\gamma_n\}_n$ is bounded, and therefore it has some subsequence equal to some $\gamma\in\Gamma$.
By Equation (\ref{eq. lahn perturbed}) we have then $$\frac{\log\lambda_1(\rho(\gamma))}{\log\lambda_\perp(\rho(\gamma))}=1.$$
\noindent As $\psi_0(\gamma)\neq 0$, we may scale the action of some element in $F_0$ to obtain the desired result (c.f. Remark \ref{rem: scaling generator in P}).

Secondly, observe that by Equation (\ref{eq. lahn perturbed}) and Remark \ref{rem: scaling generator in P} no subsequence of $\{\log\lambda_\perp(\rho(\gamma_n))\}_n$ converges to $0$. 
Hence, to finish the proof it remains to treat the case $\log\lambda_\perp(\rho(\gamma_n))\to\infty$ as $n\to\infty$.

Write $\psi_0(\gamma_n)=(p_{1,n},\dots,p_{k,n})$ for all $n$.
Up to reordering the generator $F_0$ we may assume that $\vert p_{1,n}\vert\to\infty$ as $n\to\infty$ and \begin{equation}\label{eq. lahn perturbed II}
\displaystyle\lim_{n\to\infty}\frac{\log\lambda_\perp(\rho(\gamma_n))}{p_{1,n}}=\alpha,
\end{equation} \noindent for some $\alpha\in\R$.

Up to taking a further subsequence if necessary, we may assume that there is some small $\varepsilon$ so that $\varepsilon \cdot p_{1,n}>0$ for all $n$.  
We now scale the action of the first generator of $F_0$ as in Remark \ref{rem: scaling generator in P}. 
We obtain $$\displaystyle\limsup_{n\to\infty}\frac{\log\lambda_1(\rho_\varepsilon(\gamma_n))}{\log\lambda_\perp(\rho_\varepsilon(\gamma_n))}\leq \displaystyle\limsup_{n\to\infty}\frac{\log\lambda_1(\rho(\gamma_n))}{\varepsilon \cdot p_{1,n}+\log\lambda_\perp(\rho(\gamma_n))},$$ \noindent which by Equations (\ref{eq. lahn perturbed}) and (\ref{eq. lahn perturbed II}) is equal to $$\frac{1}{\varepsilon/\alpha+1}<1.$$
\noindent This finishes the proof.

\end{proof}

The result above suggest us to introduce the set $$V_\rho:=\{\gamma\in\Gamma: \lambda_1(\rho(\gamma)) < \lambda_\perp(\rho(\gamma))\},$$ \noindent where $\rho$ is derived from Barbot.
Note that $$V_\rho^{-1}=\{\gamma\in\Gamma: \lambda_\perp(\rho(\gamma))< \lambda_2(\rho(\gamma))\}.$$
\noindent Observe that $V_\rho$ is invariant under conjugacy, and under taking positive powers. 
We also have the following, which says that decreasing the $\rho_P$-length of a curve keeping the homology class fixed keeps you inside $V_\rho$. 

\begin{lem}\label{lem: short representatives in V}
Let $\rho: \Gamma \to \SL_3(\RR)$ be a derived from Barbot representation preserving the hyperplane $P$.
Consider homologous elements $\gamma_0,\gamma_1\in\Gamma$ so that $\gamma_0\in V_\rho$ and $$\frac{\lambda_1(\rho(\gamma_1))}{\lambda_2(\rho(\gamma_1))}\leq \frac{\lambda_1(\rho(\gamma_0))}{\lambda_2(\rho(\gamma_0))}.$$ \noindent Then $\gamma_1$ belongs to $V_\rho$.  
\end{lem}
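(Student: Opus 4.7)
The plan is to reduce the inequality defining membership in $V_\rho$ to a one-variable monotonicity statement for $\lambda_1$, using two observations already recorded in the text.

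First, I would use that $\lambda_\perp$ is a group morphism on the stabilizer of $P$ in $\SL_3(\R)$ with values in the abelian group $\R_{>0}$. Consequently $\lambda_\perp\circ\rho\colon\Gamma\to\R_{>0}$ factors through the abelianization $\psi_0\colon\Gamma\to\ZZ^k$. Since $\gamma_0$ and $\gamma_1$ are homologous, this yields
\[
\lambda_\perp(\rho(\gamma_0))=\lambda_\perp(\rho(\gamma_1)),
\]
a common value that I will denote by $\lambda_\perp$.

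Second, the determinant identity $\lambda_1(\rho(\gamma))\lambda_2(\rho(\gamma))\lambda_\perp(\rho(\gamma))=1$ (valid for every $\gamma$ because $\rho(\gamma)\in\SL_3(\R)$) allows me to rewrite the ratio $\lambda_1/\lambda_2$ as
\[
\frac{\lambda_1(\rho(\gamma))}{\lambda_2(\rho(\gamma))}=\lambda_1(\rho(\gamma))^2\,\lambda_\perp(\rho(\gamma)).
\]
Applied to $\gamma=\gamma_0$ and $\gamma=\gamma_1$, and after cancelling the common factor $\lambda_\perp$, the assumption of the lemma simplifies to $\lambda_1(\rho(\gamma_1))\le\lambda_1(\rho(\gamma_0))$.

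Combining this with $\gamma_0\in V_\rho$, i.e.\ $\lambda_1(\rho(\gamma_0))<\lambda_\perp$, I obtain
\[
\lambda_1(\rho(\gamma_1))\le\lambda_1(\rho(\gamma_0))<\lambda_\perp=\lambda_\perp(\rho(\gamma_1)),
\]
which is precisely the statement $\gamma_1\in V_\rho$. There is no genuine obstacle here: the lemma is an immediate algebraic consequence of the multiplicativity of $\lambda_\perp$ together with the determinant-one constraint, and the role of the homology hypothesis is solely to ensure that the two values of $\lambda_\perp$ coincide so that the monotonicity argument in $\lambda_1$ can be carried out.
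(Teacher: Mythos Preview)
Your proof is correct and is essentially the same argument as the paper's: both use that homologous elements share the same value of $\lambda_\perp$ and then invoke the determinant identity to convert the hypothesis on $\lambda_1/\lambda_2$ into the inequality $\lambda_1(\rho(\gamma_1))\le\lambda_1(\rho(\gamma_0))$. The only cosmetic difference is that the paper records the full chain $\lambda_2(\rho(\gamma_0))\le\lambda_2(\rho(\gamma_1))<\lambda_1(\rho(\gamma_1))\le\lambda_1(\rho(\gamma_0))$, whereas you go straight to the relevant inequality via the identity $\lambda_1/\lambda_2=\lambda_1^2\lambda_\perp$.
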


\begin{proof}

Indeed, on the one hand as $\psi_0(\gamma_1)=\psi_0(\gamma_0)$ we have $\lambda_\perp(\rho(\gamma_1))=
\lambda_\perp(\rho(\gamma_0))$.
Also, this fact together with the inequality in the statement implies $$\lambda_2(\rho(\gamma_0))\leq \lambda_2(\rho(\gamma_1))< \lambda_1(\rho(\gamma_1))\leq \lambda_1(\rho(\gamma_0)).$$ \noindent This proves the lemma.
\end{proof}

From the above lemma we manage to find generating elements in $V_{\rho'}$, for a small perturbation $\rho'$ of $\rho$ (this is the only place in our argument were we need to assume that the restriction $\rho_P$ preserves the orientation of $P$).

\begin{prop}\label{prop. generator in V}
Let $\rho: \Gamma \to \SL_3(\RR)$ be a derived from Barbot representation preserving the hyperplane $P$.
Assume moreover that $\rho_P$ preserves the orientation of $P$.
Suppose that $\rho$ is not Anosov and fix some neighborhood $\mathscr{U}$ of $\rho$.
Then there exists some $\rho'\in\mathscr{U}$ preserving $P$ and a free generating set $F=\{a=c_1,c_2,\dots,c_k\}\subset \Gamma$ such that $a\in V_{\rho'}$.
\end{prop}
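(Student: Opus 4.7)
The plan is to combine Corollary \ref{cor. lahn with perturbation} (to locate a first witness of $V$), Lemma \ref{lem: short representatives in V} (to transfer this witness to a homologous element of smaller translation length), and the scaling perturbation of Remark \ref{rem: scaling generator in P}(4) (to shift $\varphi$ slightly at the end), in order to produce a free basis whose first element lies in $V_{\rho'}$.

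First, since $\rho$ is not Anosov, Corollary \ref{cor. lahn with perturbation} yields a small perturbation of $\rho$ inside the $P$-preserving representations for which $V$ becomes nonempty; I absorb this perturbation into $\rho$ and fix $\gamma \in V_\rho$. Because $V_\rho$ is invariant under conjugation and closed under taking positive powers and their roots (since $\lambda_i(\rho(\gamma^n)) = \lambda_i(\rho(\gamma))^n$ for $i \in \{1,2,\perp\}$), I may take $\gamma$ not to be a proper power in $\Gamma$. From $\lambda_\perp > \lambda_1 \geq \lambda_2$ and $\lambda_1\lambda_2\lambda_\perp = 1$ one gets $\lambda_\perp(\rho(\gamma)) > 1$, so $\varphi(\gamma) > 0$ and $v_0 := \psi_0(\gamma) \in \mathbb Z^k$ is nonzero. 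Writing $v_0 = d\, v_*$ with $v_*$ primitive in $\mathbb Z^k$ and $d \geq 1$, the primitive vector $v_*$ extends to a $\mathbb Z$-basis of $\mathbb Z^k$, which lifts along $\psi_0 \colon \Gamma \to \mathbb Z^k$ to a free basis $\{a, c_2, \ldots, c_k\}$ of $\Gamma$ with $\psi_0(a) = v_*$; in particular $a$ is already part of a free basis.

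Now, since $a^d$ and $\gamma$ are homologous, Lemma \ref{lem: short representatives in V} gives $a^d \in V_\rho$ (and hence $a \in V_\rho$, again by closedness under roots) as soon as $d \ell_{\rho_P}(a) = \ell_{\rho_P}(a^d) \leq \ell_{\rho_P}(\gamma)$, where I abbreviate $\ell_{\rho_P}(\cdot) := \log(\lambda_1(\rho(\cdot))/\lambda_2(\rho(\cdot)))$. If this inequality is not strict, a small scaling perturbation from Remark \ref{rem: scaling generator in P}(4) (which shifts $\varphi$ by $\epsilon \chi$ for any small linear functional $\chi$ while keeping $\rho_P$ fixed) restores strict inequality and yields $a \in V_{\rho'}$ with $\rho' \in \mathscr U$ still preserving $P$.

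The main obstacle is to choose the lift $a$ of $v_*$ so that $\ell_{\rho_P}(a) \leq \ell_{\rho_P}(\gamma)/d$. This is where the orientation-preserving hypothesis on $\rho_P$ enters: it ensures that $S = \mathbb H^2/\rho_P(\Gamma)$ is a genuine convex cocompact hyperbolic surface with $\pi_1(S) \cong \Gamma$, on which the existence of a short free basis (in the spirit of Bers/Mumford) together with Nielsen/Whitehead-type reductions inside the coset $\psi_0^{-1}(v_*)$ makes it possible to replace an arbitrary primitive lift of $v_*$ by one of small $\rho_P$-translation length. Once such a lift is obtained, the mechanism above delivers $a \in V_{\rho'}$ and closes the argument.
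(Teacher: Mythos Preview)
Your proposal contains a genuine gap, precisely at the point you yourself flag as ``the main obstacle''. You need a free basis element $a$ with $\psi_0(a)=v_*$ and $\ell_{\rho_P}(a)\le \ell_{\rho_P}(\gamma)/d$, but the last paragraph does not prove this; the allusions to ``Bers/Mumford'' and ``Nielsen/Whitehead-type reductions'' are not an argument. In fact the inequality you want need not hold: the stable norm $\|v_*\|$ is an infimum over \emph{multi}-curves, and for a primitive class it can be realized by a disjoint union of several simple curves whose individual homology classes are all different from $v_*$. In that situation every single closed curve with homology $v_*$ (let alone one that is part of a free basis) can be strictly longer than $\|v_*\|$, and nothing forces it to be $\le \ell_{\rho_P}(\gamma)/d$. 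So Lemma~\ref{lem: short representatives in V} cannot be invoked. (A side remark: the scaling of Remark~\ref{rem: scaling generator in P}(4) leaves $\rho_P$ and hence all lengths $\ell_{\rho_P}$ unchanged, so it cannot help to ``restore'' the length inequality; and Lemma~\ref{lem: short representatives in V} already accepts equality, so that step was unnecessary anyway.)

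The paper's proof avoids this difficulty by \emph{not} prescribing the homology class of the eventual generator. It takes $\gamma_0\in V_\rho$, looks at a length-minimizing multi-curve $m_h$ for $h=\psi_0(\gamma_0)$ on the convex-cocompact surface $\mathbb H^2/\rho_P(\Gamma)$, and uses McShane--Rivin to know that the components of $m_h$ are disjoint simple (non-null-homologous) geodesics $\gamma_1,\dots,\gamma_s$. A product estimate (Remark~\ref{rem: scaling generator in P}(3)) applied to $\widehat\gamma_k=\gamma_1^k\cdots\gamma_s^k$ then forces at least one $\gamma_i$ to lie in $V_\rho$. Finally, a separate topological argument shows that any simple non-null-homologous closed curve on the surface extends to a free generating set of $\Gamma$. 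The key conceptual difference from your approach is that the homology class of this $\gamma_i$ is whatever the minimizing multi-curve hands you, not the prescribed $v_*$; this is exactly what makes the length comparison go through.
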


To prove Proposition \ref{prop. generator in V} we look at the following useful object. 
Consider the convex co-compact (orientable) hyperbolic surface $S:=\HH^2/\rho_P(\Gamma)$.
We let $C\subset S$ be the convex core, which is a compact hyperbolic surface with geodesic boundary containing all closed geodesics of $S$.

For a given non-zero $h\in H^1(\Gamma)=\Gamma/[\Gamma,\Gamma]$ we consider the infimum $\Vert h\Vert$ of lengths of multi-curves in $C$ representing the homology class $h$.
The Arzel\'a-Ascoli Theorem implies that this is actually a minimum.
Let $m_h$ be a minimizing multi-curve. 
Its components are not all reduced to points, otherwise we would have $h= 0$. 
In particular, $\Vert h \Vert >0$ and we may assume that all the components of $m_h$ are (non constant) closed curves.
Each of these curves are geodesics, as they minimize the length in its homotopy class.
Further, these geodesics are simple (considered possibly with multiplicity) and their union is disjoint, c.f. McShane-Rivin \cite[Theorem 4.1]{mcshane-rivin}.
None of these simple geodesics is homologically trivial, otherwise it could be removed from $m_h$ reducing the length.

\begin{proof}[Proof of Proposition \ref{prop. generator in V}]
By Corollary \ref{cor. lahn with perturbation} we may assume that there is some $\gamma_0\in V_\rho$. 
As a first step we prove that there is an element in $V_\rho$ which induces a simple closed geodesic in $C$.
Afterwards we show that this suffices to prove the statement.

Let us then prove the first claim, that is, we will find a simple geodesic in $V_\rho$.
For this purpose, we consider the homology class $h:=\psi_0(\gamma_0)$, which is non-zero as $\gamma_0\in V_\rho$.
In particular, if our element $\gamma_0\in V_\rho$ is itself is a minimizing multi-curve, then as it is a curve, it is simple and there is nothing to prove.
As a consequence, to prove our first claim it remains to treat the case in which $$\Vert\psi_0(\gamma_0)\Vert<\ell(\gamma_0),$$ \noindent where for $\gamma\in\Gamma$ we let $$\ell(\gamma):=\log\left( \frac{\lambda_1(\rho(\gamma))}{\lambda_2(\rho(\gamma))}\right)$$ \noindent denote the length of the closed geodesic in $C$ associated to $\gamma\in\Gamma$.
We then have $$\displaystyle\lim_{k\to+\infty}\ell(\gamma_0^k)-k\cdot \Vert \psi_0(\gamma_0) \Vert=\displaystyle\lim_{k\to+\infty}k\cdot(\ell(\gamma_0)-\Vert\psi_0(\gamma_0)\Vert)=+\infty.$$
\noindent On the other hand, let $\gamma_1,\dots,\gamma_s$ be the (disjoint) simple closed geodesics supporting a length minimizing multi-curve in $\psi_0(\gamma_0)$ (considered possibly with multiplicity).
For each positive integer $k$ consider the curve $\widehat{\gamma}_k:=\gamma_1^k\dots\gamma_s^k$, which is homologous to $\gamma_0^k$.
By Remark \ref{rem: scaling generator in P} we have $$\displaystyle\limsup_{k\to+\infty} \left(\ell(\widehat{\gamma}_k)-k\cdot \Vert\psi_0(\gamma_0)\Vert\right)<+\infty.$$ 
\noindent By Lemma \ref{lem: short representatives in V} we conclude that $\widehat{\gamma}_k\in V_\rho$ for every $k$ large enough.
That is, $$1>\frac{\lambda_1(\rho(\widehat{\gamma}_k))}{\lambda_\perp(\rho(\widehat{\gamma}_k))}=\frac{\lambda_1(\rho(\widehat{\gamma}_k))}{\lambda_1(\rho(\gamma_1))^k\dots\lambda_1(\rho(\gamma_s))^k}\cdot 
\left(\frac{\lambda_1(\rho(\gamma_1))}{\lambda_\perp(\rho(\gamma_1))}\right)^k\dots \left(\frac{\lambda_1(\rho(\gamma_s))}{\lambda_\perp(\rho(\gamma_s))}\right)^k$$ \noindent for all $k$ large enough. 
Applying again Remark \ref{rem: scaling generator in P} we conclude that $\gamma_i\in V_\rho$ for some $i=1,\dots,s$, thus proving our first claim.

From the discussion above, we may assume that $\gamma_0\in V_\rho$ induces in fact a simple (non homologically trivial) closed geodesic in $C\subset S$. 
As we now show this implies that $\gamma_0$ belongs to a free generating set of $\Gamma$, thus finishing the proof.

%\todo{L: este remate seria la unica parte donde entiendo que estamos usando que $\rho_P$ preserva orientacion. R: Mirando, me da la sensación que si probamos lo que queremos para un subgrupo de indice 2, tb lo probamos en gral. Comparto que eso implicaría cambiar los enunciados. De repente una solución es al inicio de la seccion decir, que para simplificar, vamos a asumir 'throughout' que se preserva orientacion en el plano, y dejamos las adaptaciones (menores) al lector.}
Indeed, this claim is a general fact about (orientable) topological surfaces of negative Euler characteristic. 
Let $g\geq 0$ be the genus of $C$ and $n\geq 1$ be the number of boundary components. 
If $\gamma_0$ induces a boundary curve of $C$, then as it is non homologically trivial, we necessarily have $n\geq 2$. 
In this case $\gamma_0$ belongs to a free generating set of $\Gamma$ and there is nothing to prove.
More generally, if $\gamma_0$ is non separating, by classification of surfaces it also belongs to a free generating set. 
Hence we assume from now on that $\gamma_0$ separates $C$ in two compact hyperbolic surfaces $C_1$ and $C_2$.
In particular, $\Gamma$ splits as the amalgamated product $$\Gamma_1\star_{\langle\gamma_0\rangle}\Gamma_2,$$ \noindent where $\Gamma_i$ denotes the fundamental group of $C_i$.

We observe that both $C_1$ and $C_2$ have more than one boundary components, as $\gamma_0$ is not homologically trivial in $C$. 
Hence, for $i=1,2$ the group $\Gamma_i$ admits a free generating set $F_i:=\{a_1,b_1,\dots,a_{g_i},b_{g_i},c_1\dots,c_{n_i}\}$ for some $g_i\geq 0$ and $n_i\geq 1$ so that $$\gamma_0=\left(\displaystyle\prod_{j=1}^{g_i} [a_j,b_j]\right)c_1\dots c_{n_i}.	$$
\noindent In particular, $$F_i':=\{a_1,b_1,\dots,a_{g_i},b_{g_i},c_1\dots,
c_{n_i-1},\gamma_0\}$$ \noindent is a free generating set of $\Gamma_i$.
Hence, $F:=F_1'\cup F_2'$ is a free generating set of $\Gamma$ and contains $\gamma_0\in V_{\rho}$.

\end{proof}

We now show that a complementary generating element can be assumed to be outside $V_{\rho'}\cup V_{\rho'}^{-1}$.

\begin{prop}\label{prop. generator in V and complementary not in V}
Let $\rho: \Gamma \to \SL_3(\RR)$ be a derived from Barbot representation preserving the hyperplane $P$. 
Assume moreover that $\rho_P$ preserves the orientation of $P$.
Suppose that $\rho$ is not Anosov and fix some neighborhood $\mathscr{U}$ of $\rho$.
Then there exists some $\rho'\in\mathscr{U}$  preserving $P$ and a free generating set $\{a=c_1,b=c_2,c_3,\dots,c_k\}\subset \Gamma$ such that $$a\in V_{\rho'} \text{ and } b\notin V_{\rho'}\cup V_{\rho'}^{-1}.$$
\end{prop}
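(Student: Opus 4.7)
The plan is to bootstrap from Proposition \ref{prop. generator in V}. First apply it to produce $\rho_1 \in \mathscr{U}$ preserving $P$ and a free generating set $\{a, c_2, \ldots, c_k\}$ of $\Gamma$ with $a \in V_{\rho_1}$. The condition $a \in V_{\rho_1}$ is defined by a strict open inequality of eigenvalue moduli, so it persists under any sufficiently small further perturbation. If some $c_i$ with $i \geq 2$ already lies outside $V_{\rho_1} \cup V_{\rho_1}^{-1}$, relabel it as $b$ and take $\rho' := \rho_1$; the nontrivial case is when every $c_i$ with $i \geq 2$ lies in $V_{\rho_1} \cup V_{\rho_1}^{-1}$, i.e.\ $|\varphi(c_i)| > \tfrac{2}{3} \log \lambda_u(\rho_P(c_i)) > 0$.

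In the nontrivial case, consider Nielsen replacements of $c_2$ of the form $b := c_2 \cdot w$, with $w$ a word in $\{a, c_3, \ldots, c_k\}$. Any such choice yields a free generating set $\{a, b, c_3, \ldots, c_k\}$ of $\Gamma$, and one has
\[
\varphi(b) = \varphi(c_2) + \varphi(w), \qquad \log \lambda_u(\rho_P(b)) > 0,
\]
the positivity following from Lemma \ref{lem: restriction to plane purely hyperbolic}. The aim is to choose $w$, possibly together with an arbitrarily small further perturbation of $\rho_1$, so that $|\varphi(b)| \leq \tfrac{2}{3}\log\lambda_u(\rho_P(b))$, placing $b$ outside $V \cup V^{-1}$.

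When $k \geq 3$, take $b := c_2 a^n c_3^m$. After an arbitrarily small perturbation obtained by independently scaling $\varphi(a)$ and $\varphi(c_3)$ via Remark \ref{rem: scaling generator in P}(4), one may assume $\varphi(a)/\varphi(c_3)$ is irrational. By a standard Diophantine approximation argument (Kronecker's theorem), there exist integer pairs $(n_j, m_j) \in \ZZ^2$ with $\max(|n_j|, |m_j|) \to \infty$ along which $|n_j \varphi(a) + m_j \varphi(c_3) + \varphi(c_2)|$ decays like $O(1/\max(|n_j|, |m_j|))$. Meanwhile, since $\rho_P(a)$ and $\rho_P(c_3)$ are non-commuting hyperbolic isometries of $\HH^2$, Remark \ref{rem: scaling generator in P}(3) implies $\log\lambda_u(\rho_P(b))$ grows linearly in $|n_j| + |m_j|$. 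Thus $|\varphi(b)|/\log\lambda_u(\rho_P(b)) \to 0$, and for $j$ large enough we obtain the required $b$.

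The main obstacle is the rank-two case $k = 2$, where $w$ must be a power of $a$, so $b_n := c_2 a^n$ has $\varphi$-values in the arithmetic progression $\varphi(c_2) + n\varphi(a)$; the closest integer $n_\ast$ to $-\varphi(c_2)/\varphi(a)$ gives only $|\varphi(b_{n_\ast})| \leq \varphi(a)/2$, which may exceed $\tfrac{2}{3}\log\lambda_u(\rho_P(b_{n_\ast}))$. To handle this one would combine small independent scalings of $\varphi(a)$ and $\varphi(c_2)$ (Remark \ref{rem: scaling generator in P}(4)) with the choice of $n$: since $\log\lambda_u(\rho_P(b_n))$ grows linearly in $|n|$ by Remark \ref{rem: scaling generator in P}(3), for $|n_\ast|$ comparable to $|\varphi(c_2)/\varphi(a)|$ large enough, the window in which $b_n$ can fall outside $V \cup V^{-1}$ is of comparable size to the shift needed, and an intermediate-value argument on the parameter governing the scaling produces the desired perturbed $\rho'$ and $b$. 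The delicate balance in this final step between the magnitude of the perturbation and the intrinsic constants of $\rho_1$ is where the crux of the technical work lies.
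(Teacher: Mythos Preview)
Your approach diverges from the paper's, and the $k=2$ case carries a genuine gap that you yourself flag but do not resolve.

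For $k \geq 3$ your Diophantine idea is plausible: after making $\varphi(a)/\varphi(c_3)$ irrational by a small scaling, one can find pairs $(n,m)$ with $|n|+|m|\to\infty$ and $|\varphi(c_2 a^n c_3^m)|$ bounded, while $\log\lambda_u(\rho_P(c_2 a^n c_3^m))$ grows linearly because $c_2 a^n c_3^m$ is cyclically reduced and $\rho_P$ is convex cocompact. This does give $b\notin V\cup V^{-1}$, by a route different from the paper's.

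For $k=2$, however, the argument does not close. The optimal $n_\ast$ is the nearest integer to $-\varphi(c_2)/\varphi(a)$, and nothing in the hypotheses forces $|\varphi(c_2)/\varphi(a)|$ to be large; both generators merely lie in $V_\rho$, which constrains each $\varphi$-value only relative to its own $\rho_P$-length. If $|n_\ast|$ is small (say $0$ or $1$), then $\log\lambda_u(\rho_P(b_{n_\ast}))$ is of bounded size, the ``window'' you refer to need not be comparable to the residual $|\varphi(b_{n_\ast})|$, and a small scaling cannot bridge the gap. Your last paragraph concedes this is ``where the crux of the technical work lies,'' but that is precisely the step that must be supplied; as written it is a hope, not a proof.

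The paper's argument avoids this obstacle, handles all $k\geq 2$ uniformly, and in fact needs \emph{no} further perturbation beyond the one already made to invoke Proposition~\ref{prop. generator in V}. It runs a Euclidean algorithm on the morphism $\tau(\gamma)=\lambda_\perp(\rho(\gamma))^{-1}$: starting from $a,b\in V_\rho$ with $\tau(a)\leq\tau(b)<1$, one repeatedly replaces $(a,b)$ by $(b,\,ab^{-p})$ with $p$ chosen so that $\tau(b)<\tau(ab^{-p})\leq 1$, stopping as soon as the new second generator leaves $V_\rho\cup V_\rho^{-1}$. If the process never stops, then $\tau(\sigma_n(b))\nearrow 1$, i.e.\ $\lambda_\perp(\rho(\sigma_n(b)))\to 1$; but the uniform lower bound $\lambda_\perp^{1/2}\lambda_1=\lambda_u(\rho_P)>1$ coming from the quasi-isometry of $\rho_P$ (Remark~\ref{rem: scaling generator in P}(2)) then forces $\lambda_1/\lambda_\perp>1$ along the sequence, contradicting $\sigma_n(b)\in V_\rho$. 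The iteration is exactly what your single Nielsen move $c_2 a^n$ lacks: it drives $\lambda_\perp$ toward $1$ regardless of the initial ratio $\varphi(c_2)/\varphi(a)$, and termination is forced by the QI property of $\rho_P$ rather than by any perturbation.
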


\begin{proof}
By Proposition \ref{prop. generator in V}, up to replacing $\rho$ by a small perturbation if needed we may find a free generating set $F=\{a=c_1,c_2,\dots,c_k\}$ of $\Gamma$ so that $a\in V_\rho$.
If for some $i=2,\dots,k$ we have that $c_i$ does not belong to $ V_{\rho}\cup V_{\rho}^{-1}$ there is nothing to prove, so we assume that this is not the case.
Let $b:=c_2$.
We will construct an automorphism  $\sigma$ of $\Gamma$ such that the generating set $\sigma(F)$ satisfies $$\sigma(a)\in V_{\rho} \text{ and } \sigma(b)\notin V_{\rho}\cup V_{\rho}^{-1}.$$

The construction is inductive. 
For the first step, note that up to replacing $b$ by $b^{-1}$ we may assume $b\in V_\rho$. 
In particular, both $\lambda_\perp(\rho(a))$ and $\lambda_\perp(\rho(b))$ are strictly larger than $1$. 
Up to composing with the automorphism of $\Gamma$ that permutes $a$ with $b$ and fixes all the other generators if needed, we may assume that $$\tau(a)\leq\tau(b)<1,$$ \noindent where $\tau:\Gamma\to\R_{>0}$ is the morphism given by $$\tau(\gamma):=
\lambda_\perp(\rho(\gamma))^{-1}.$$
\noindent This gives us an automorphism $\sigma_1$ such that both $\sigma_1(a)$ and $\sigma_1(b)$ belong to $V_\rho$, and moreover $$\tau(\sigma_1(a))\leq\tau(\sigma_1(b))<1.$$

Let now $n$ be an integer $\geq 1$.
We assume by induction that we have constructed an automorphism $\sigma_n$ of $\Gamma$ such that both $\sigma_n(a)$ and $\sigma_n(b)$ belong to $V_\rho$, $\tau(\sigma_n(a))\leq\tau(\sigma_n(b))<1$, and moreover $$\sigma_n(a)=\sigma_{n-1}(b).$$
\noindent We find a positive integer $p_n$ so that $$\tau(\sigma_n(b))<\tau(\sigma_n(ab^{-p_n}))\leq 1,$$ \noindent and we define the automorphism $\sigma_{n+1}$ of $\Gamma$ by letting $$\sigma_{n+1}(a):=\sigma_n(b) \text{ and } \sigma_{n+1}(b):=\sigma_n(ab^{-p_n}),$$ 
\noindent and keeping fixed all the other generators.
Note then that $\sigma_{n+1}(a)\in V_\rho$. 
Hence, if $\sigma
_{n+1}(b)\notin V_{\rho}\cup V_{\rho}^{-1}$ we are done. 
If this is not the case, we continue the induction. 
We only have to show that the process stops at some finite step.

Suppose by contradiction that this is not the case. 
We get an infinite sequence of automorphisms $\{\sigma_n\}_{n\geq 1}$ such that for all $n$ both $\sigma_n(a)$ and $\sigma_n(b)$ belong to $V_\rho$, $\tau(\sigma_n(a))\leq\tau(\sigma_n(b))<1$, and $\sigma_{n+1}(a)=\sigma_n(b)$.
Furthermore, by construction the sequence $\{\tau(\sigma_n(b))\}$ is strictly increasing and therefore converges.
We claim that in fact converges to $1$.
Indeed, note that $\{\tau(\sigma_n(a))\}_{n\geq 1}$ converges to the same limit, and 
$$\frac{\tau(\sigma_n(a))}{\tau(\sigma_n(b))}=\tau(\sigma_n(ab^{-p_n}))\cdot \tau(\sigma_n(b^{p_n-1}))\leq \tau(\sigma_{n+1}(b)),$$ \noindent for all $n$.
This shows the claim. 

We then have $$\displaystyle\lim_{n\to\infty}\lambda_\perp(\rho(\sigma_n(b)))=1.$$
But by Remark \ref{rem: scaling generator in P} we have $$\displaystyle\limsup_{n\to\infty}\frac{\lambda_1(\rho(\sigma_n(b)))}{\lambda_\perp(\rho(\sigma_n(b)))}=\displaystyle\limsup_{n\to\infty}\frac{\lambda_\perp^{\frac{1}{2}}(\rho(\sigma_n(b)))\cdot \lambda_1(\rho(\sigma_n(b)))}{\lambda_\perp^{\frac{3}{2}}(\rho(\sigma_n(b)))}>1.$$
\noindent We may then find some integer $n>1$ such that $$\frac{\lambda_1(\rho(\sigma_{n}(b)))}{\lambda_\perp(\rho(\sigma_{n}(b)))}>1.$$
\noindent By construction we also have $\lambda_\perp(\rho(\sigma_{n}(b)))> 1$, and this shows that $\sigma_{n}(b)\notin V_{\rho}\cup V_{\rho}^{-1}$, contradicting our assumptions.

\end{proof}

We can finally prove the desired result of this subsection.

\begin{proof}[Proof of Proposition  \ref{prop-invariantplanejordan}] 
By Proposition \ref{prop. generator in V and complementary not in V} there is a free generating set $F$ of $\Gamma$ and $a,b\in F$ so that, after possibly perturbing $\rho$ inside $\mathscr{U}$, we have $$ \lambda_2(\rho(a)) < \lambda_1(\rho(a))< \lambda_\perp (\rho(a)), $$
\noindent and $$  \lambda_2(\rho(b))\leq \lambda_\perp(\rho(b))\leq  \lambda_1 (\rho(b)).$$
\noindent In particular, $P =\Ecs(\rho(a))$. 
Moreover, as $\lambda_2(\rho(b))<  \lambda_1 (\rho(b))$ we can perturb slightly $\rho(b)$ to assume that this matrix is also loxodromic, that is, the inequalities above are strict.

Now, by Remark \ref{rem: scaling generator in P} we have that  $$ \frac{\lambda_1 (\rho(a^{m}b^{n})) }{\lambda_1 (\rho(a))^{m}\lambda_1 (\rho(b))^{n}} $$ \noindent converges to some positive constant only depending on $\rho_P(a)$ and $\rho_P(b)$, as $m,n\to+\infty$ .  
Hence, as $$\frac{ \lambda_1 (\rho(a^{m}b^{n})) }{\lambda_{\perp} (\rho(a^{m} b^{n}))}= \frac{\lambda_1 (\rho(a^{m}b^{n}))}{\lambda_1 (\rho(a))^{m}\cdot \lambda_1 (\rho(b))^{n}}\cdot\left(\frac{\lambda_1 (\rho(a))}{\lambda_\perp (\rho(a))}\right)^m\cdot \left(\frac{\lambda_1 (\rho(b))}{\lambda_\perp (\rho(b))}\right)^n
$$ \noindent and as $ \lambda_\perp (\rho(a)) > \lambda_1(\rho(a)) $ and $\lambda_1(\rho(b)) > \lambda_{\perp}(\rho(b))$, we find a constant $C>1$ so that there are sequences $m_k, n_k \to +\infty$ such that $$ \frac{ \lambda_1 (\rho(a^{m_k}b^{n_k})) }{\lambda_{\perp} (\rho(a^{m_k} b^{n_k}))} \in [C^{-1}, C ]$$ \noindent for all $k$. 

Fix a small $\eps_0>0$ so that for every $\eps\in(-\eps_0,\eps_0)$ the representation $\rho_\eps$ obtained by scaling the $a$-action on $P$ belongs to $\mathscr{U}$ (recall Remark \ref{rem: scaling generator in P}).
We have $$ \frac{ \lambda_1 (\rho_\eps(a^{m_k}b^{n_k})) }{\lambda_{\perp} (\rho_\eps(a^{m_k} b^{n_k}))}=e^{-\frac{3}{2}m_k\eps}\cdot \frac{ \lambda_1 (\rho(a^{m_k}b^{n_k})) }{\lambda_{\perp} (\rho(a^{m_k} b^{n_k}))}\in\left[C^{-1}e^{-\frac{3}{2}m_k\eps},Ce^{-\frac{3}{2}m_k\eps}\right]$$ \noindent for all $k$.
As $m_k$ can be taken to be arbitrarily large, by continuity we find some $\eps$ such that for some $m=m_k$ and $n=n_k$ we have $$\frac{ \lambda_1 (\rho_\eps(a^{m}b^{n})) }{\lambda_{\perp} (\rho_\eps(a^{m} b^{n}))}=1.$$
\noindent Note moreover that $\lambda_1 (\rho_\eps(a^{m}b^{n}))\neq 1$, because $\rho_\eps$ still preserves $P$ and $\rho_P=(\rho_\eps)_P$ is quasi-isometric.
In particular, $L_0\subset P=\Ecs(\rho_\eps(a))$.
Moreover, as the deformation $\rho_\eps$ is arbitrarily small and $\rho(a)$ and $\rho(b)$ were loxodromic, so they are $\rho_\eps(a)$ and $\rho_\eps(b)$.
This finishes the proof.
\end{proof}

\subsection{Proof of Theorem \ref{t.main}} \label{subsec: proof of main thm}

We are now ready to prove Theorem \ref{t.main}, which will be a consequence of Propositions \ref{prop: criterion for non robust qi} and \ref{prop-invariantplanejordan}.

Hence, we fix a derived from Barbot representation $\rho$ of a free group $\Gamma$, so that $\rho_P$ preserves the orientation of $P$.
We suppose moreover that $\rho$ is not Anosov.
We will find arbitrarily small perturbations of $\rho$ which are not quasi-isometric. 

Indeed, by Proposition \ref{prop-invariantplanejordan}, up to replacing $\rho$ by an arbitrarily small deformation still preserving $P$, and letting $\omega:=a^mb^n$ for appropriate integers $m$ and $n$ we have:

\begin{enumerate}
\item the restriction of $\rho(\omega)$ to some hyperplane $P_0$ is either diagonalizable with eigenvalues of equal modulus $\lambda_1=\lambda_\perp\neq 1$, or it is a Jordan block of eigenvalue $\mu$, for some $\mu\neq \pm 1$.
\item The matrices $\rho(a)$ and $\rho(b)$ are loxodromic.
\item The complementary eigenline $L_0$ of $\rho(\omega)$ is contained in $\Ecs(\rho(a))$.
\end{enumerate}

In order to apply Proposition \ref{prop: criterion for non robust qi}, we now show that we may find an arbitrarily small perturbation $\rho'$ of $\rho$, not preserving $P$ anymore, and for which condition (1) is replaced by $$P_0=\ker(\rho' (\omega^q)-\mu^q), $$ \noindent for some non-zero integer $q$ and $\mu\neq\pm 1$, while still keeping conditions (2) and (3) unchanged. 
Indeed, if the restriction of $\rho'(\omega)$ to $P_0$ is diagonalizable then $\rho'=\rho$ and $q=2$ do the job. 
In contrast, if it is a Jordan block instead, by Lemma \ref{lem: power map is open} there exist arbitrarily small neighborhoods $U$ of $\rho(b)$ so that $U^{(n)}$ is a neighborhood of $\rho(b^n)$.
In particular, $\rho(a^m)U^{(n)}$ is a neighborhood of $\rho(\omega)$.
Fix an element $h\in \rho(a^m)U^{(n)}$ preserving the splitting $L_0\oplus P_0$ and for which $$ P_0=\ker(h^q-\mu^q) $$ \noindent for some non-zero integer $q$.
We may take $\rho'(b)\in U$ such that $$\rho(a^m)\rho'(b^n)=h.$$
\noindent Hence, letting $\rho'(a):=\rho(a)$ proves the claim.

Finally, by Proposition \ref{prop: criterion for non robust qi} the representation $\rho'$ is accumulated by representations containing unipotent elements. Remark \ref{rem: unipotents} yields the result.

\newcommand{\etalchar}[1]{$^{#1}$}

\end{document}